\newtheorem{theorem}{Theorem}[section]
\newtheorem{prop}[theorem]{Proposition}
\newtheorem{lemma}[theorem]{Lemma}
\newtheorem{cor}[theorem]{Corollary}
\theoremstyle{definition}
\newtheorem{rem}[theorem]{Remark}
\newtheorem{defi}[theorem]{Definition}
\newcommand{\ra}{\rightarrow}
\newcommand{\IC }{\mathbb{C}}
\newcommand{\IR}{\mathbb{R}}
\newcommand{\IZ}{\mathbb{Z}}
\newcommand{\IQ}{\mathbb{Q}}
\newcommand{\coloneqq}{:=}
\DeclareMathOperator{\Pic}{Pic}
\DeclareMathOperator{\nef}{Nef}
\DeclareMathOperator{\movb}{Mov}
\DeclareMathOperator{\post}{Pos}
\DeclareMathOperator{\id}{id}
\DeclareMathOperator{\Hom}{Hom}
\DeclareMathOperator{\divi}{div}
\DeclareMathOperator{\aut}{Aut}
\DeclareMathOperator{\bir}{Bir}
\DeclareMathOperator{\transc}{Tr}
\DeclareMathOperator{\grass}{Grass}
\DeclarePairedDelimiter{\ceil}{\lceil}{\rceil}
\newcommand{\rk}{\mathrm{rk}\,}
\newcommand{\ns}{\mathrm{NS}}
\newcommand{\hsk}{S^{\left[n\right]}}
\newcommand{\hskn}{K3^{\left[n\right]}}
\def\blfootnote{\xdef\@thefnmark{}\@footnotetext}
\begin{document}

\title[Automorphisms of Hilbert schemes of points on K3 surfaces]{Automorphisms of Hilbert schemes of points on a generic projective K3 surface}

\author{Alberto Cattaneo}

\address{Universit\`a degli Studi di Milano, Dipartimento di Matematica ``F. Enriques'', Via Cesare Saldini 50, 20133 Milano, Italy. \newline \indent Universit\'e de Poitiers, Laboratoire de Math\'ematiques et Applications, T\'el\'eport 2, Boulevard Marie et Pierre Curie, 86962 Futuroscope Chasseneuil, France.}

\email{alberto.cattaneo1@unimi.it; alberto.cattaneo@math.univ-poitiers.fr}

\maketitle

\blfootnote {{\it 2010 Mathematics Subject Classification:} 14J50, 14C05, 14C22, 14C34.} \blfootnote {{\it Key words and phrases:} Irreducible holomorphic symplectic manifolds, Hilbert schemes, automorphisms, Torelli theorem, cones of divisors.}

\begin{abstract}
We study automorphisms of the Hilbert scheme of $n$ points on a generic projective $K3$ surface $S$, for any $n \geq 2$. We show that $\aut(\hsk)$ is either trivial or generated by a non-symplectic involution and we determine numerical and divisorial conditions which allow us to distinguish between the two cases. As an application of these results we prove that, for any $n \geq 2$, there exist infinite values for the degree of $S$ such that $\hsk$ admits a non-natural involution. This provides a generalization of the results of \cite{bcnws} for $n=2$.
\end{abstract}

\section{Introduction} \label{sec: intro}

We consider a complex, algebraic $K3$ surface $S$ with $\Pic(S)=\IZ H$, where we assume that $H$ is an ample line bundle with $H^2 = 2t$, $t \geq 1$. Let $\hsk$ be the Hilbert scheme of $n$ points on $S$: it is a hyperk\"ahler manifold, arising as resolution of singularities of the $n$-th symmetric power $S^{(n)}$. The second cohomology group $H^2(\hsk, \IZ)$ admits a lattice structure with respect to the Beauville--Bogomolov--Fujiki quadratic form (see \cite{beauville} for more details). In particular, as lattices $H^2(\hsk, \IZ) \cong H^2(S, \IZ) \oplus \IZ \delta$, where $2\delta$ is the class of the exceptional divisor of the Hilbert-Chow morphism $\hsk \ra S^{(n)}$ and $\delta^2 = -2(n-1)$. The ample class $H$ on $S$ induces a line bundle $\widetilde{H}$ on $\hsk$, whose first Chern class we denote by $h$. Then, we can take $\left\{ h, -\delta \right\}$ as a basis for the N\'eron-Severi lattice $\ns ( S^{\left[ n\right]}) \subset H^2(\hsk, \IZ)$.

Boissi\`ere, Cattaneo, Nieper-Wisskirchen and Sarti \cite{bcnws} computed the automorphism group $\aut(S^{\left[2\right]})$ for the Hilbert scheme of two points on a $K3$ surface $S$ as above, showing that it is either trivial or generated by a non-symplectic involution, i.e.\ an involution which does not fix the closed two-form generating $H^{2,0}(S^{\left[2\right]})$. More recently, Debarre and Macr\`i \cite{debarre_macri} determined the birational automorphism group $\bir(S^{\left[2\right]})$, which coincides with $\aut(S^{\left[2\right]})$ except when $t=5$. The aim of this paper is to generalize the results of \cite{bcnws}, providing a description of $\aut(\hsk)$ for any $n \geq 3$. In particular, Oguiso proved that this group is always finite (\cite[Corollary 5.2]{oguiso}).

In Section \ref{sec: preliminaries} we briefly recall several classical results on Pell's equation and we summarize the description of Bayer--Macr\`i \cite{bayer_macri_mmp} for the cones of divisors on $\hsk$. In Section \ref{sec: isometries} we use $n$-very ample classes on the surface $S$ to construct ample classes on $\hsk$; this allows us to prove, in Proposition \ref{unique_isometry}, that if the action on $\ns(\hsk)$ of an automorphism $f \in \aut(\hsk)$  is non-trivial, then it is a specific isometry of order two determined by the extremal rays of the ample cone. 

In Section \ref{section: autom group} we show that, for any $n \geq 2$, either $\aut(\hsk) = \left\{ \id \right\}$ or $\aut(\hsk) = \left\{ \id, f \right\}$, with $f$ a non-symplectic involution. This means that, passing from $n=2$ to $n \geq 3$, no new possible structures for the automorphism group of $\hsk$ arise. By studying the action of $f$ on the nef cone of $\hsk$, we prove the following. 

\begin{prop}
Let $S$ be an algebraic $K3$ surface with $\Pic(S)=\IZ H$, $H^2 = 2t$.
\begin{itemize}
\item If $t=1$, then $\aut(\hsk) = \left\{ \id, \iota^{[n]} \right\}$, where $\iota^{[n]}$  is the natural involution induced by the involution $\iota$ which generates $\aut(S)$.
\item If $2 \leq t \leq 2n-3$, then $\aut(\hsk) = \left\{ \id \right\}$.
\end{itemize}
\end{prop}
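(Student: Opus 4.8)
The plan is to combine the dichotomy proved in Section~\ref{section: autom group} --- $\aut(\hsk)$ is either trivial or of the form $\{\id,f\}$ with $f$ a non-symplectic involution --- with Proposition~\ref{unique_isometry} and the description of the nef cone recalled in Section~\ref{sec: preliminaries}.

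First I would settle the case $t=1$. Then $S$ is a double cover of $\IP^2$ branched over a smooth sextic, so $\aut(S)=\{\id,\iota\}$ with $\iota$ the covering involution, which is non-symplectic (since $\iota^*H=H$ and $\iota\neq\id$ force $\iota^*|_{T(S)}=-\id$). Hence the induced natural involution $\iota^{[n]}\in\aut(\hsk)$ is non-symplectic as well, and in particular $\iota^{[n]}\neq\id$; since $\aut(\hsk)$ has order at most $2$ by Section~\ref{section: autom group}, it must equal $\{\id,\iota^{[n]}\}$.

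Now suppose $2\leq t\leq 2n-3$ and, for a contradiction, that a non-symplectic involution $f\in\aut(\hsk)$ exists. Because $f$ is a non-symplectic involution, $f^*$ is a Hodge isometry of order $2$ acting by $-1$ on $H^{2,0}(\hsk)$, so its $(+1)$-eigenspace on $T(\hsk)$ is a purely $(1,1)$ sub-Hodge structure; as $\ns(\hsk)_\IQ$ and $T(\hsk)_\IQ$ meet only in $0$, this eigenspace vanishes and $f^*|_{T(\hsk)}=-\id$. I next claim that $f^*$ cannot act trivially on $\ns(\hsk)$ as well. If it did, then $f^*=\id_{\ns}\oplus(-\id_T)$; but $H^2(\hsk,\IZ)$ is an overlattice of $\ns(\hsk)\oplus T(\hsk)$ glued along all of the discriminant group $A_{T(S)}$, which is cyclic of order $2t$ (it is the discriminant group of the orthogonal complement of $\langle 2t\rangle$ in a unimodular lattice), and $\id_{\ns}\oplus(-\id_T)$ preserves this overlattice only when $2\cdot A_{T(S)}=0$, i.e.\ only when $t=1$ --- a contradiction. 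Hence $f^*$ acts non-trivially on $\ns(\hsk)$, and by Proposition~\ref{unique_isometry} the restriction $\sigma\coloneqq f^*|_{\ns(\hsk)}$ is the order-two isometry that interchanges the two extremal rays of the ample cone.

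One of those extremal rays is $\IR_{\geq 0}h$: the class $h$ lies on the boundary of the nef cone because $\widetilde H$ is pulled back from an ample class under the divisorial Hilbert-Chow contraction $\hsk\to S^{(n)}$. Let $\alpha_0$ be the primitive generator of the other extremal ray, described explicitly in Section~\ref{sec: preliminaries} via the Bayer--Macr\`i analysis of the nef cone. Since $\sigma$ is a lattice isometry interchanging the primitive classes $h$ and $\alpha_0$, it forces $\alpha_0^2=h^2=2t$. The remaining --- and main --- step is the numerical one: one has to pin down which Bayer--Macr\`i wall bounds the nef cone when $2\leq t\leq 2n-3$, write down $\alpha_0$, and check that $\alpha_0^2\neq 2t$ throughout this range, obtaining the contradiction that yields $\aut(\hsk)=\{\id\}$. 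I expect this last computation (governed by the Pell-type analysis of Section~\ref{sec: preliminaries} that pins down $\alpha_0$) to be the real obstacle; everything else is lattice bookkeeping.
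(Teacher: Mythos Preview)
Your $t=1$ argument has a genuine gap. The dichotomy $|\aut(\hsk)|\leq 2$ that you invoke from Section~\ref{section: autom group} is Proposition~\ref{aut for t>=2}, which assumes $t\geq 2$. For $t=1$ the map $\Psi\colon\aut(\hsk)\to O(\ns(\hsk))$ has kernel $\{\id,\iota^{[n]}\}$, and the Remark after Proposition~\ref{aut for t>=2} only says that $\aut(\hsk)/\ker\Psi$ has order at most $2$; so a priori $|\aut(\hsk)|$ could be $4$. Exhibiting $\iota^{[n]}$ is therefore not enough: you must still rule out \emph{non-natural} automorphisms when $t=1$. The paper handles this uniformly for all $t\leq 2n-3$ (including $t=1$) in Proposition~\ref{caso speciale}, and the Corollary is then immediate.

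For $2\leq t\leq 2n-3$ your strategy is viable but left incomplete, and it differs from the paper's. The paper does not compare $\alpha_0^2$ with $h^2$; it works instead with the primitive generator $bh-a\delta$ of the $\sigma$-invariant line, uses the explicit description $\nef(\hsk)=\langle h,\,h-\tfrac{2t}{t+n}\delta\rangle$ from \cite[Proposition~10.3]{bayer_macri_vecchio} (valid for $n\geq\frac{t+3}{2}$), and applies the divisibility constraint of Lemma~\ref{extending isometry}, namely $tb^2-(n-1)a^2\mid b$. Matching the second ray forces $tb^2+(n-1)a^2=(t+n)ab$; the divisibility then gives $a\mid b$, hence $a=1$, and the resulting quadratic in $b$ has no admissible integer root. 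Your alternative can also be finished: with $d=\gcd(t+n,2t)$ the primitive generator of the second ray is $\frac{1}{d}\bigl((t+n)h-2t\delta\bigr)$, of square $\frac{2t}{d^2}\bigl((t-n)^2+4t\bigr)$, and a short gcd computation (write $g=\gcd(t,n)$ and split on the parity of $t/g+n/g$) shows this is never $2t$ for $t\geq 1$, $n\geq 2$. But this is precisely the ``real obstacle'' you declined to carry out, and without it you do not have a proof. Incidentally, once you do carry it out, the same computation rules out non-natural automorphisms for $t=1$ as well and repairs the first gap --- so your two cases should not have been separated.
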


In Section \ref{sec: invariant pol} we investigate more in detail the action of the non-symplectic involution on $H^2(\hsk, \IZ)$, looking at fixed ample classes. For $n=2$, by \cite[Theorem 1.1]{bcnws} $\aut(S^{\left[2\right]})$ is generated by an involution if and only if there exists an ample class $\nu \in \ns(S^{\left[2\right]})$ of square two. We provide the following generalization.

\begin{theorem} \label{thm_intro: divisorial conditions}
Let $S$ be an algebraic $K3$ surface with $\Pic(S)=\IZ H$, $H^2 = 2t$, $t \geq 1$. Then $\hsk$ admits a non-symplectic, non-natural involution if and only if there exists a primitive ample class $\nu \in \ns(\hsk)$ with either
\begin{itemize}
\item $\nu$ of square two, or
\item $\nu$ of square $2(n-1)$ and divisibility $n-1$ in $H^2(\hsk, \IZ)$.
\end{itemize}
\end{theorem}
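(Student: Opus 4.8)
The plan is to combine the structural result on $\aut(\hsk)$ (it is trivial or order two, generated by a non-symplectic involution $f$) with the description of $f$ as a specific isometry of $\ns(\hsk)$ coming from Proposition \ref{unique_isometry}, and then to translate the existence of $f$ into the existence of a suitable invariant class. Suppose first that $\hsk$ admits a non-symplectic, non-natural involution $f$. By Proposition \ref{unique_isometry} the action $f^*$ on $\ns(\hsk)$ is a well-determined isometry of order two; being an isometry of a rank-two lattice of signature $(1,1)$, either $f^* = \pm\id$ or $f^*$ is a reflection. Since $f$ is non-symplectic it acts as $-\id$ on $H^{2,0}$, but on the hyperbolic plane $\ns(\hsk)$ it cannot act as $-\id$ (that would force $f$ to act as $-\id$ on all of $\ns$ including the ample cone, contradicting that $f$ preserves the ample cone), and $f^* = \id$ on $\ns$ would make $f$ a natural involution by the Torelli-type arguments of Section \ref{section: autom group}; hence $f^*$ is a nontrivial reflection on $\ns(\hsk)$. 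A reflection on a hyperbolic plane fixes pointwise exactly one line, the one spanned by a class $\nu$ with $\nu^\perp$ of negative square, and since $f$ preserves ampleness this fixed line meets the ample cone; rescaling, there is a primitive ample class $\nu$ with $f^*\nu = \nu$. The reflection is then $x \mapsto x - \frac{2(x,\nu)}{\nu^2}\nu$ (if $\nu^2 > 0$) or, when $\nu^2 < 0$, the reflection in $\nu^\perp$; a case analysis on which of these is an \emph{integral} isometry of the lattice $\ns(\hsk) = \IZ h \oplus \IZ(-\delta)$, with $h^2 = 2t$, $\delta^2 = 2(n-1)$, shows that this forces either $\nu^2 = 2$, or $\nu^2 = 2(n-1)$ with $\nu$ of divisibility $n-1$ in $H^2(\hsk,\IZ)$ — these being precisely the square-and-divisibility combinations for which the formula above (respectively the reflection in the orthogonal complement) has integer coefficients and does not coincide with $\pm\id$.

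For the converse, suppose such a primitive ample $\nu \in \ns(\hsk)$ exists. I would write down explicitly the reflection $\rho$ of $\ns(\hsk)$ fixing $\nu$ (and acting as $-\id$ on $\nu^\perp$, or the associated $-R_\nu$, depending on the case), check by the above integrality computation that it extends to an isometry $g$ of the full lattice $H^2(\hsk,\IZ)$ which acts as $-\id$ on the transcendental lattice $T(\hsk) = T(S)$ (this uses that, for $S$ generic, $T(S)$ has no nontrivial isometries other than $\pm\id$, together with the orthogonality of the gluing), and that $g$ is a Hodge isometry preserving the period. By the Torelli theorem for hyperkähler manifolds, $g$ is realized by an automorphism of $\hsk$ provided $g$ preserves the Kähler (equivalently ample, since $\hsk$ is projective) cone and sends an effective class to an effective class; the ampleness of the fixed class $\nu$ is exactly what guarantees this — $g$ fixes an interior point of the ample cone and, being an isometry, preserves the cone. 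The resulting automorphism $f$ is non-symplectic because $g = -\id$ on $H^{2,0}$, and it is non-natural because its action on $\ns(\hsk)$ is the nontrivial reflection $\rho \neq \id$, whereas a natural involution would act trivially there (case $t=1$) or not exist; this also uses the classification of Section \ref{section: autom group} to exclude that $\hsk$ happens to carry a natural involution simultaneously, i.e.\ one treats $t = 1$ separately, where the natural involution $\iota^{[n]}$ acts trivially on $\ns$ and so is genuinely distinct from $f$.

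The main obstacle I expect is the integrality analysis pinning down exactly the two numerical cases: one must show that a nontrivial order-two isometry of $\ns(\hsk)$ that preserves the ample cone is \emph{always} a reflection in (the line spanned by) an ample class of square $2$, or in the orthogonal complement of an ample class of square $2(n-1)$ and divisibility $n-1$, and that no other square/divisibility pair can occur. This requires solving the relevant Pell-type equations (using the material recalled in Section \ref{sec: preliminaries}) to see which primitive classes $ah - b\delta$ can be the fixed class, matching the extremal-ray description of the ample cone from Bayer--Macrì, and then verifying that the wall-crossing/ample-cone data is compatible with an isometry rather than merely a birational one. A secondary subtlety, on the converse side, is confirming that the constructed Hodge isometry genuinely comes from a biregular automorphism and not just a birational one — this is where the hypothesis that $\nu$ is \emph{ample} (rather than merely big and movable) is essential, and one must be careful to invoke the correct form of the Torelli theorem (e.g.\ as in Markman's survey), checking that $g$ maps the nef cone to itself and fixes the exceptional divisor class $\delta$ up to the sign dictated by the two cases.
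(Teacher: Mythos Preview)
Your converse direction is essentially the paper's argument (Proposition \ref{prop: existence of involutions}), though you should make explicit that the hypothesis on $(\nu,\nu)$ and $\divi(\nu)$ is what guarantees $-R_\nu$ is a \emph{monodromy operator} (i.e.\ acts as $\pm\id$ on $A_L$, via \cite{ghs_kodaira}); Markman's Torelli theorem requires this, not merely that $-R_\nu$ be a Hodge isometry preserving the K\"ahler cone.

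The forward direction has a genuine gap. You propose to pin down $(\nu,\nu)\in\{2,2(n-1)\}$ by an integrality analysis of the reflection on $\ns(\hsk)$; this cannot succeed. On the rank-two lattice $\ns(\hsk)$ alone, and even on the full lattice $L\cong H^2(\hsk,\IZ)$, the condition that $-R_\nu$ be integral only gives $(\nu,\nu)\mid 2\divi(\nu)$, and since $\divi(\nu)\mid 2(n-1)$ this leaves many possible squares. What actually forces the dichotomy is that $f^*$, coming from an automorphism, is a monodromy operator, hence by Markman's description lies in $\widetilde{O}^+(L)=\{\varphi\in O^+(L):\varphi|_{A_L}=\pm\id\}$. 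Because $f$ is non-symplectic with rank-one invariant lattice $\langle\nu\rangle$, one has $f^*=-R_\nu$ globally on $L$, and the Gritsenko--Hulek--Sankaran analysis of which $-R_\nu$ act as $\pm\id$ on $A_L\cong\IZ/2(n-1)\IZ$ then yields exactly $(\nu,\nu)=2$ (when $f^*|_{A_L}=-\id$) or $(\nu,\nu)=2(n-1)$ with $\divi(\nu)=n-1$ (when $f^*|_{A_L}=\id$). This is the content of the paper's Proposition \ref{prop: generatori T}, and it is not recoverable from Pell-type equations or from the Bayer--Macr\`i wall data alone. A minor related confusion: there is no ``$\nu^2<0$'' case to consider, since the fixed class of the reflection on $\ns(\hsk)$ is ample; in both numerical cases $f^*|_{\ns}$ is the same map $x\mapsto \frac{2(x,\nu)}{(\nu,\nu)}\nu - x$, not a reflection in $\nu^\perp$.
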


We prove one of the two implications of the theorem in a more general setting: the existence of an ample class $\nu \in \ns(X)$ as in the statement guarantees the existence of a non-symplectic involution on any hyperk\"ahler manifold $X$ which is deformation equivalent to the Hilbert scheme of $n$ points on a $K3$ surface (so-called manifolds of $\hskn$-type; see Proposition \ref{prop: existence of involutions}). A more in-depth study of non-symplectic involutions on manifolds of $\hskn$-type will be presented in the upcoming paper \cite{CCC}, by Chiara Camere, the author and Andrea Cattaneo.

Finally, in Section \ref{sec: final numerical conditions} we achieve a purely numerical characterization for the existence of a non-trivial automorphism on $\hsk$, in the same spirit of \cite{bcnws}. This can be done by using the descriptions of Bayer--Macr\`i for the movable cone and the nef cone of the manifold.

\begin{theorem}
Let $S$ be an algebraic $K3$ surface with $\Pic(S)=\IZ H$, $H^2 = 2t$, $t \geq 2$ and $n \geq 2$. Let $(z,w)$ be the minimal solution of Pell's equation $X^2 - t(n-1)Y^2 = 1$ with $X \equiv \pm 1$ (mod $n-1$). Then $\aut(\hsk) \neq \left\{ \id \right\}$ if and only if 
\begin{enumerate}
\item[\textit{(i)}] $t(n-1)$ is not a square;
\item[\textit{(ii)}] if $n \neq 2$, the equation $(n-1)X^2 - tY^2 = 1$ has no integer solutions;
\item[\textit{(iii)}] for all integers $\rho, \alpha$ as follows:
\begin{enumerate}
\item $\rho = -1$ and $1 \leq \alpha \leq n-1$, or
\item $\rho = 0$ and $3 \leq \alpha \leq n-1$, or
\item $1 \leq \rho < \frac{n-1}{4}$ and $\max\left\{4\rho+1,\ceil[\bigg]{2\sqrt{\rho(n-1)}}  \right\} \leq \alpha \leq n-1$
\end{enumerate}
the minimal solution $(X,Y)$ of Pell's equation 
\[X^2 - 4t(n-1)Y^2 = \alpha^2 - 4\rho(n-1)\]
with $X \equiv \pm \alpha$ (mod $2(n-1)$), if it exists, is such that $\frac{Y}{X} \geq \frac{w}{2z}$;
\item[\textit{(iv)}] there exist integer solutions either for the equation $(n-1)X^2 - tY^2 = -1$ or for the equation $X^2 - t(n-1)Y^2 = -1$.
\end{enumerate}
If these conditions are satisfied, then $ \nef(\hsk) = \overline{\movb(\hsk)} = \langle h, zh-tw\delta \rangle$ and $\aut(\hsk)$ is generated by a non-symplectic, non-natural involution, whose action on $\ns(\hsk) = \IZ h \oplus \IZ (-\delta)$ is given by the matrix
\begin{equation*}
\begin{pmatrix}
z & -(n-1)w \\
tw & -z
\end{pmatrix}.
\end{equation*}
\end{theorem}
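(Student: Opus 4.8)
The plan is to combine the structural results of the earlier sections with the explicit descriptions of the nef and movable cones due to Bayer--Macr\`i, translating the geometric statement ``$\aut(\hsk) \neq \{\id\}$'' into arithmetic conditions on Pell-type equations. By the results of Section \ref{section: autom group}, under the hypothesis $t \geq 2$ we know $\aut(\hsk)$ is either trivial or generated by a non-symplectic involution $f$; moreover, by Proposition \ref{unique_isometry}, if $f$ acts non-trivially on $\ns(\hsk)$, its action is the unique order-two isometry determined by the extremal rays of the ample cone. So the first step is to observe that $\aut(\hsk) \neq \{\id\}$ forces $f$ to act non-trivially on $\ns(\hsk)$: a non-trivial automorphism acting trivially on $\ns(\hsk)$ would have to be symplectic or would contradict finiteness/the classification, so this reduces us to exhibiting the isometry as a genuine automorphism. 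By the Torelli theorem for hyperk\"ahler manifolds (Markman's version), an isometry of $H^2(\hsk, \IZ)$ that is a Hodge isometry, preserves the nef cone, and lies in the monodromy group is induced by an automorphism. Since $\ns(\hsk)$ has rank two and the transcendental lattice is fixed (the involution is non-symplectic, acting as $-1$ on the period), the key point becomes: \emph{the candidate isometry $\begin{pmatrix} z & -(n-1)w \\ tw & -z \end{pmatrix}$ preserves $\nef(\hsk)$ if and only if $\nef(\hsk) = \langle h, zh - tw\delta\rangle$}, which happens precisely when conditions (i)--(iv) hold.

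Next I would carry out the cone computation. Using Bayer--Macr\`i's description (recalled in Section \ref{sec: preliminaries}), the closure of the movable cone $\overline{\movb(\hsk)}$ is cut out by walls determined by solutions to certain Pell equations, and the nef cone is a chamber inside it bounded by further walls coming from stable-object wall-crossings, again governed by Pell equations $X^2 - 4t(n-1)Y^2 = \alpha^2 - 4\rho(n-1)$ for bounded ranges of the discriminant parameters $\rho$ and $\alpha$. Condition (i), that $t(n-1)$ is not a square, ensures $H^2 = 2t$-type Pell equations have infinitely many solutions and the relevant boundary ray is irrational, i.e.\ the cone is not the whole positive cone for trivial reasons. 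Condition (iv) is precisely the condition for the boundary ray of the movable cone to be $\langle zh - tw\delta \rangle$ rather than something else — i.e.\ that there is no ``smaller'' divisorial or flopping wall — and $(z,w)$ being the minimal solution with $X \equiv \pm 1 \pmod{n-1}$ is what pins down the divisibility so that the isometry has \emph{integer} matrix entries and preserves the lattice $\ns(\hsk)$. Condition (ii) rules out the case where $\overline{\movb(\hsk)}$ is strictly larger (its other boundary being governed by $(n-1)X^2 - tY^2 = 1$), and condition (iii) is the collection of inequalities guaranteeing that \emph{no} nef-cone wall interior to the movable cone appears before the ray $\langle zh - tw\delta\rangle$ — each triple $(\rho,\alpha)$ in the list corresponds to a potential wall, and requiring $Y/X \geq w/(2z)$ says that wall, if it exists, is at least as far out as the target ray, hence does not chop the cone. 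So once one shows $\nef(\hsk) = \langle h, zh - tw\delta\rangle$, one checks directly that the matrix above swaps the two extremal rays (it fixes the square-two or square-$2(n-1)$ ample class in between and negates the transcendental part), hence is the Picard-preserving isometry of Proposition \ref{unique_isometry}, and then invokes Torelli to upgrade it to an automorphism; the involution is non-symplectic by construction and non-natural because for $t \geq 2$ there is no involution on $S$ to induce one.

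For the converse, assuming $\aut(\hsk) \neq \{\id\}$: the non-symplectic involution $f$ acts non-trivially on $\ns(\hsk)$ as an order-two isometry, and since $f^*$ preserves $\nef(\hsk)$ while acting non-trivially on a rank-two lattice, it must swap the two extremal rays of the nef cone, which forces both extremal rays to be rational multiples of eigenvectors and in particular forces the nef cone to have the specific form $\langle h, zh - tw\delta\rangle$ with the matrix as stated — running the cone computation backwards then yields (i)--(iv). I expect the main obstacle to be the bookkeeping in condition (iii): extracting the exact ranges $\rho = -1$, $\rho = 0$ with $\alpha \geq 3$, and $1 \leq \rho < (n-1)/4$ with the $\max\{4\rho+1, \ceil{2\sqrt{\rho(n-1)}}\}$ lower bound on $\alpha$ from Bayer--Macr\`i's wall classification requires carefully determining which $(\rho,\alpha)$ actually give rise to a wall of $\nef(\hsk)$ lying strictly inside $\overline{\movb(\hsk)}$ (as opposed to walls outside the movable cone, or non-existent ones), and correctly handling the congruence conditions $X \equiv \pm\alpha \pmod{2(n-1)}$ that govern whether the wall is realized by an actual spherical or isotropic class in the Mukai lattice of the relevant moduli problem. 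The $n = 2$ case must be checked separately throughout, since then $n - 1 = 1$ trivializes the divisibility conditions and the ``$\rho$'' ranges in (iii) are empty, recovering the statement of \cite{bcnws}.
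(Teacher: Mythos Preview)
Your overall strategy matches the paper's: translate the existence of the involution into cone conditions via Bayer--Macr\`i and then invoke Torelli. However, you have misidentified the role of condition (iv). You write that (iv) is ``precisely the condition for the boundary ray of the movable cone to be $\langle zh - tw\delta \rangle$'', but that is what (i) and (ii) accomplish (via Theorem \ref{thm_bm_mvb hsk}, case (3)). Condition (iv) plays a different and essential role: it guarantees the existence of a primitive ample class $\nu \in \ns(\hsk)$ with $(\nu,\nu) = 2$, or with $(\nu,\nu) = 2(n-1)$ and $\divi(\nu) = n-1$ (via Lemma \ref{lemma: 2(n-1)} and its square-$2$ analogue). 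This class $\nu$ is what makes the Torelli step work: the isometry one lifts is $-R_\nu$, and it is only for $\nu$ of these specific squares and divisibilities that $-R_\nu$ is an \emph{integral} monodromy operator on all of $H^2(\hsk,\IZ)$ (Proposition \ref{prop: existence of involutions}, using \cite{ghs_kodaira}). Without (iv), conditions (i)--(iii) may still force $\nef(\hsk) = \overline{\movb(\hsk)} = \langle h, zh - tw\delta\rangle$, and the matrix you write down may still be an isometry of $\ns(\hsk)$ swapping the two rays, but its extension by $-\id$ on the transcendental lattice need not glue to an integral isometry of the full lattice $H^2(\hsk,\IZ)$, so Torelli cannot be applied.

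The same gap affects your converse. You say ``running the cone computation backwards then yields (i)--(iv)'', but (iv) does not come from the cone. It comes from Proposition \ref{prop: generatori T}: the invariant lattice $T_f$ of the involution $f$ is generated by a class $\nu$ of square $2$ or of square $2(n-1)$ with $\divi(\nu) = n-1$, and writing out $\nu$ in the basis $\{h,-\delta\}$ exhibits an integer solution of $(n-1)X^2 - tY^2 = -1$ or of $X^2 - t(n-1)Y^2 = -1$ respectively (the second case via Lemma \ref{lemma: 2(n-1)}). A minor correction: for $n=2$ the range in (iii) is not empty --- the case $\rho = -1$, $\alpha = 1$ survives and gives the equation $X^2 - 4tY^2 = 5$ appearing in \cite{bcnws}.
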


As an application of the theorem, we prove that for each $n \geq 2$ there are infinite values of $t$ for which there exists a non-natural involution $f \in \aut(\hsk)$.

\begin{prop}
Let $S$ be an algebraic $K3$ surface with $\Pic(S)=\IZ H$, $H^2 = 2t$ and assume $t = (n-1)k^2 + 1$ for $k, n$ positive integers, $n \geq 2$. If $k \geq \frac{n+3}{2}$, there exists a non-symplectic, non-natural involution $f \in \aut(\hsk)$, whose invariant lattice in $H^2(\hsk, \IZ)$ is generated by an element of square two.
\end{prop}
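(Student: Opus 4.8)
The plan is to deduce the proposition from the preceding theorem by verifying that, when $t = (n-1)k^2+1$ with $k \geq \frac{n+3}{2}$, all four conditions \textit{(i)}--\textit{(iv)} hold. Three of them are immediate. Since $\bigl((n-1)k\bigr)^2 < t(n-1) = (n-1)^2k^2+(n-1) < \bigl((n-1)k+1\bigr)^2$, the integer $t(n-1)$ is squeezed between consecutive squares and hence is not a square, giving \textit{(i)}. The pair $(X,Y) = (k,1)$ satisfies $(n-1)k^2 - t = -1$, so it solves $(n-1)X^2 - tY^2 = -1$ and \textit{(iv)} holds. For \textit{(ii)} (relevant only for $n \neq 2$), I would use that $t(n-1) = \bigl((n-1)k\bigr)^2 + (n-1)$ with $(n-1) \mid 2(n-1)k$, which yields the continued fraction
\[
\sqrt{t(n-1)} = \bigl[\,(n-1)k;\,\overline{2k,\,2(n-1)k}\,\bigr]
\]
of even period length $2$; hence $X^2 - t(n-1)Y^2 = -1$ has no solution, while conjugating a hypothetical solution of $(n-1)X^2 - tY^2 = 1$ by the element $(n-1)k + \sqrt{t(n-1)}$ of norm $-(n-1)$ (coming from \textit{(iv)}) would produce one, a contradiction. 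This same expansion also shows that the minimal solution of $X^2 - t(n-1)Y^2 = 1$ is $(z,w) = \bigl(2(n-1)k^2+1,\,2k\bigr)$, that $z \equiv 1 \pmod{n-1}$, and that $\tfrac{w}{2z} = \tfrac{k}{2(n-1)k^2+1}$.

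The substantive point is condition \textit{(iii)}, and it is here that the hypothesis $k \geq \frac{n+3}{2}$ — equivalently $n-1 \leq 2k-4$ — is used. For each admissible pair $(\rho,\alpha)$ one has $1 \leq \alpha \leq n-1$ and $\rho$ in a bounded range, so that $\alpha^2 - 4\rho(n-1)$ is small next to $4t(n-1) = \bigl(2(n-1)k\bigr)^2 + 4(n-1)$. I would go through this finite list and show that for every admissible pair either the equation $X^2 - 4t(n-1)Y^2 = \alpha^2 - 4\rho(n-1)$ has no solution with $X \equiv \pm\alpha \pmod{2(n-1)}$, or its minimal solution $(X,Y)$ satisfies $\tfrac{Y}{X} \geq \tfrac{w}{2z}$. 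The constraints defining the admissible pairs (in particular $\ceil{2\sqrt{\rho(n-1)}} \leq \alpha$, and $4\rho+1 \leq \alpha$ when $\rho \geq 1$), together with $n-1 \leq 2k-4$, are what one needs both to exclude the degenerate possibility $Y = 0$ (which would make $\alpha^2 - 4\rho(n-1)$ a perfect square) and to bound $X$ from below tightly enough to force the inequality. Treating this family uniformly in $n$ and $k$ is the main obstacle; the rest is bookkeeping with Pell's equation.

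Granting \textit{(i)}--\textit{(iv)}, the preceding theorem gives $\aut(\hsk) = \{\id, f\}$ with $f$ a non-symplectic, non-natural involution acting on $\ns(\hsk) = \IZ h \oplus \IZ(-\delta)$ by the matrix
\[
\begin{pmatrix}
2(n-1)k^2+1 & -2(n-1)k \\[2pt]
2tk & -\bigl(2(n-1)k^2+1\bigr)
\end{pmatrix},
\]
which has trace $0$ and determinant $-1$, so its $(+1)$-eigenspace is a line; reading off the first row, that line is spanned by $\nu := h - k\delta$, which is primitive in $H^2(\hsk,\IZ)$ with $\nu^2 = 2t - 2(n-1)k^2 = 2$. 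Since $f$ is non-symplectic, $f^*$ acts as $-\id$ on the transcendental lattice of $\hsk$ (the involution built in the proof of the preceding theorem is, by the Torelli theorem, the one whose cohomology action equals the above matrix on $\ns(\hsk)$ and $-\id$ on $T(\hsk)$). Hence every $f^*$-invariant integral class has vanishing transcendental component and $\ns$-component in $\IQ\nu$, so the invariant lattice of $f$ in $H^2(\hsk,\IZ)$ is $\IZ\nu$, generated by the class $\nu$ of square $2$, which is the assertion. (Equivalently, via Theorem~\ref{thm_intro: divisorial conditions}, the point is that $h - k\delta$ is a primitive ample class of square $2$ on $\hsk$.)
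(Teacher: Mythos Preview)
Your handling of conditions \textit{(i)}, \textit{(ii)}, \textit{(iv)} is fine and largely parallels the paper. Your norm-multiplication argument for \textit{(ii)} (multiply a hypothetical element of norm $n-1$ by the element $(n-1)k+\sqrt{t(n-1)}$ of norm $-(n-1)$, divide by $n-1$, and contradict the even period of $\sqrt{t(n-1)}$) is correct and in fact slicker than the paper's route, which instead analyzes the equation $X^2-t(n-1)Y^2=n-1$ directly via convergents.

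The gap is condition \textit{(iii)}. You identify it as ``the main obstacle'' and propose to ``go through this finite list,'' but you do not supply the argument, and the vague suggestion of bounding $X$ from below case by case is not how the paper (or any uniform argument) proceeds. The paper's key observation is this: the bound $k\ge\frac{n+3}{2}$ is exactly what forces
\[
4t(n-1)\;>\;\bigl(\alpha^2-4\rho(n-1)\bigr)^2
\]
for every admissible pair $(\rho,\alpha)$ (the right-hand side is at most $\bigl((n-1)^2+4(n-1)\bigr)^2=(n-1)^2(n+3)^2$, while the left-hand side exceeds $4(n-1)^2k^2$). Once $m:=\alpha^2-4\rho(n-1)$ satisfies $0<m<\sqrt{4t(n-1)}$, all solutions of $X^2-4t(n-1)Y^2=m$ arise from convergents of $\sqrt{4t(n-1)}=[\,2k(n-1);\overline{k,\,4k(n-1)}\,]$. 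The quadratic form takes only the values $-4(n-1)$ and $1$ on consecutive convergents, so $m$ is representable only when $m=h^2$ is a perfect square, and then the minimal positive solution is $(hz,\tfrac{hw}{2})$, whose slope is exactly $\tfrac{w}{2z}$. This disposes of every pair $(\rho,\alpha)$ simultaneously. Without this continued-fraction step (or an equivalent uniform mechanism), your proposal does not prove \textit{(iii)}, and the proposition does not follow.

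Your concluding identification of the invariant lattice as $\IZ\nu$ with $\nu=h-k\delta$ and $\nu^2=2$ is correct.
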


We point out that several results of Sections \ref{sec: isometries}, \ref{section: autom group} have been obtained independently by Olivier Debarre in his notes \cite{debarre} (in preparation), where the interested reader can also find some details on the group $\bir(\hsk)$.

\smallskip
\textbf{Acknowledgements:} I wish to thank Alessandra Sarti for proposing the problem and for her helpful guidance throughout the project. I am also grateful to Bert van Geemen, for carefully reading a first draft of the paper and for his precious remarks. This work has greatly benefited from discussions with Samuel Boissi\`ere, Chiara Camere and Andrea Cattaneo. A special thanks goes to Emanuele Macr\`i, for many useful explanations and suggestions.  

\section{Preliminaries} \label{sec: preliminaries}

\subsection{Pell's equation} \label{sec: pell}

A quick overview of the basic theory of Pell's equation can be found in \cite[\S 2.1]{bcnws}. In this section we only fix the notation and recall the properties that we will need for our purposes.

\begin{defi}
A (generalized) Pell's equation is a diophantine equation in two variables $X,Y \in \IZ$ of the form
\[X^2 - rY^2 = m \]
with $r \in \mathbb{N}$ and $m \in \IZ$.\\
A solution $(X,Y)$ of the equation is called \emph{positive} if $X > 0, Y > 0$. If solutions exist, the \emph{minimal} solution is the positive solution with minimal $X$.
\end{defi}

\begin{rem} \label{rem: pell slope}
If $m > 0$ (respectively, $m < 0$), the minimal solution of Pell's equation $X^2 - rY^2 = m$ is also the solution which minimizes (respectively, maximizes) the slope $\frac{Y}{X} = \sqrt{\frac{1}{r} - \frac{m}{rX^2}}$.
\end{rem}

Clearly, for any $r \in \mathbb{N}$ there exist solutions for Pell's equation $X^2 - rY^2 = 1$; in particular, if $r$ is a square the only solutions are $(\pm 1, 0)$. In the case where $r$ is the product of two non trivial integers and $m=1$, we have the following result.

\begin{lemma} \label{lemma: pell}
Let $s, q \in \mathbb{N}$, with $q \neq 1$. If the equation $sX^2 - qY^2 = -1$ admits integer solutions, let $(a,b)$ be the positive one with minimal $X$. Then the minimal solution of Pell's equation $X^2 - sqY^2 = 1$ is $(2sa^2 +1, 2ab)$.
\end{lemma}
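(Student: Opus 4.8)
The plan is to exploit the standard theory of units in the ring $\IZ[\sqrt{sq}]$, passing through the quadratic ring attached to the "twisted" Pell equation $sX^2 - qY^2 = \pm 1$. First I would observe that a positive solution $(a,b)$ of $sX^2 - qY^2 = -1$ produces the element $\xi := a\sqrt{s} + b\sqrt{q}$, whose norm (for the conjugation $\sqrt{s} \mapsto -\sqrt{s}$, $\sqrt{q}\mapsto -\sqrt q$) is $N(\xi) = sa^2 - qb^2 = -1$. Squaring, $\xi^2 = (sa^2 + qb^2) + 2ab\sqrt{sq}$, and since $qb^2 = sa^2 + 1$ we get $\xi^2 = (2sa^2 + 1) + 2ab\sqrt{sq}$, an element of $\IZ[\sqrt{sq}]$ of norm $N(\xi)^2 = 1$. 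Thus $(2sa^2+1, 2ab)$ is indeed a positive solution of $X^2 - sqY^2 = 1$; the content of the lemma is that it is the \emph{minimal} one.

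Next I would set up the comparison of solutions via size. Let $(z_0,w_0)$ be the minimal solution of $X^2 - sqY^2 = 1$, and consider the real number $\zeta := z_0 + w_0\sqrt{sq} > 1$; by Remark \ref{rem: pell slope} (or directly) minimality of $(z_0,w_0)$ is equivalent to $\zeta$ being the smallest element $>1$ of the form $x + y\sqrt{sq}$ with $(x,y)$ a positive solution. Since $(2sa^2+1,2ab)$ is one such solution and equals $\xi^2$ with $\xi = a\sqrt s + b\sqrt q > 1$ (both $a,b \ge 1$, so $\xi > 1$), we have $\zeta \le \xi^2$. It remains to rule out $\zeta < \xi^2$, i.e.\ to show no smaller positive solution exists. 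The key step is: any element $x + y\sqrt{sq}$ with $x^2 - sqy^2 = 1$, $x,y>0$, must actually be a power of $\xi$. For this I would argue that $\xi$ generates (together with $-1$) the full group of units of norm $\pm 1$ in the order $\IZ[\sqrt s, \sqrt q]\cap \IR_{>0}$: the hypothesis that $(a,b)$ is the solution with minimal $X$ to $sX^2 - qY^2 = -1$ means $\xi$ is the fundamental unit of negative norm, hence the fundamental unit overall is either $\xi$ or has $\xi$ as a power; in the relevant rank-one situation $\xi$ itself is fundamental among positive units. Consequently every positive solution of $X^2 - sqY^2 = 1$ arises as $\xi^{2k}$, $k \ge 1$ (even powers, since odd powers have norm $-1$ and lie outside $\IZ[\sqrt{sq}]$), and the smallest is $\xi^2 = (2sa^2+1) + 2ab\sqrt{sq}$. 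This forces $(z_0,w_0) = (2sa^2+1,2ab)$.

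The main obstacle I anticipate is the last step: carefully justifying that $\xi = a\sqrt s + b\sqrt q$ is \emph{fundamental} and that no positive solution of $X^2-sqY^2=1$ can slip in strictly between $1$ and $\xi^2$. One has to be a little careful because we are working not in a field but with the specific real embeddings, and because $q \ne 1$ is used precisely to ensure $\sqrt q \notin \IZ$ so that $\xi \notin \IZ$ and the relevant ring genuinely has rank two over $\IZ$. A clean way to close this gap, avoiding algebraic-number-theory machinery, is elementary descent: if $(x,y)$ is a positive solution of $X^2 - sqY^2 = 1$ with $1 < x + y\sqrt{sq} < \xi^2$, then $\eta := (x + y\sqrt{sq})\,\xi^{-1} = (x+y\sqrt{sq})(-a\sqrt s + b\sqrt q)/(-1)$ lies strictly between $\xi^{-1}$ and $\xi$, has the form $u\sqrt s + v\sqrt q$ with $u,v \in \IZ$ and $su^2 - qv^2 = -1$, and a short sign/positivity analysis (using $\eta > 0$ forces $u,v$ of appropriate signs, hence $|u| < a$ or $|v| < b$) contradicts the minimality of $(a,b)$. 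Assembling these pieces gives the stated minimal solution.
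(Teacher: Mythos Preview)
Your descent argument is essentially correct and gives a self-contained proof. Note, however, that the paper does not actually prove this lemma: its ``proof'' simply refers the reader to \cite{debarre} for the general statement and to \cite[Lemma 2.1]{bcnws} for the case $s=1$. So there is no approach in the paper to compare yours against; what you have written is more than the paper provides.

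Two small points to clean up. First, the conjugation you want is the one sending $\sqrt{q}\mapsto -\sqrt{q}$ while \emph{fixing} $\sqrt{s}$ (or vice versa): under the map negating both roots, $\xi$ goes to $-\xi$ and you do not recover $sa^2-qb^2$. This does not affect the substance, since your formula $\xi^{-1}=-a\sqrt{s}+b\sqrt{q}$ is correct and the computation that $\eta=(x+y\sqrt{sq})\xi^{-1}$ has the form $u\sqrt{s}+v\sqrt{q}$ with $su^2-qv^2=-1$ goes through. Second, the paragraph appealing to ``$\xi$ is the fundamental unit'' is vague (and slightly garbled: the fundamental unit would have $\xi$ as a power, not the reverse), but you rightly abandon it for the concrete descent. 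In that descent, after reducing to $1<\eta<\xi$ you should record explicitly that $\bar\eta=-\eta^{-1}\in(-1,0)$ forces $u=\frac{\eta+\bar\eta}{2\sqrt s}>0$ and $v=\frac{\eta-\bar\eta}{2\sqrt q}>0$, and then $su^2-qv^2=sa^2-qb^2$ together with $\eta<\xi$ gives $u<a$, contradicting minimality. The hypothesis $q\neq 1$ is used to rule out the degenerate solution $(u,v)=(0,\pm1)$, and $\gcd(s,q)=1$ (automatic from $sX^2-qY^2=-1$) ensures the linear-independence statements you need.
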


\begin{proof}
A more general statement can be found in \cite{debarre}. For the case $s=1$ see also \cite[Lemma 2.1]{bcnws}.
\end{proof}

The integer solutions of $X^2 - rY^2 = m$ can be divided into equivalence classes. Two solutions $(x, y)$ and $(x',y')$ are equivalent if there exists a solution $(z,w)$ of Pell's equation $X^2 - rY^2 = 1$ such that
\begin{equation} \label{eq: pell solutions}
\begin{cases}
x' = zx + rwy \\
y' = wx + zy
\end{cases}
\end{equation}

\noindent (for more details, see \cite[Chapter XXXIII, \S 18]{chrystal}). We define the \emph{fundamental solution} in an equivalence class to be the solution with smallest non-negative $Y$; if there are two solutions with this property in the same equivalence class, they are of the form $(X,Y)$, $(-X,Y)$, with $X > 0$: in this case we consider $(X,Y)$ to be the fundamental solution. By applying (\ref{eq: pell solutions}) recursively, all solutions in an equivalence class can be reconstructed from the fundamental one, after computing the minimal solution of $X^2 - rY^2 = 1$.

\begin{rem} \label{rem: fundamental solutions}
If $m > 0$, let $(X,Y)$ be a fundamental solution of $X^2 - rY^2 = m$ and $(z,w)$ the minimal solution of $X^2 - rY^2 = 1$. Then, either $(X,Y)$ or $(-X,Y)$ belongs to the closed interval, on the hyperbola of equation $X^2 - rY^2 = m$, delimited by the points $(\sqrt{m}, 0)$ and $(\sqrt{m(z+1)/2}, \sqrt{m(z-1)/(2r)})$. Notice, moreover, that all solutions $(x,y)$ contained in this interval are such that $0 \leq \frac{y}{x} < \frac{w}{z}$.
\end{rem}

\subsection{Cones of divisors on $\hsk$} \label{sec: intro cones}

If $S$ is an algebraic $K3$ surface, the cohomology ring $H^*(S, \IZ) = H^0(S, \IZ) \oplus H^2(S, \IZ) \oplus H^4(S, \IZ)$ admits a lattice structure of rank $24$, called the \emph{Mukai lattice}, with pairing $\left( (r, l, s), (r', l', s') \right) \coloneqq l \cdot l' - rs' - r's$. As a lattice, $H^*(S, \IZ)$ is even and isometric to $U^{\oplus 4} \oplus E_8(-1)^{\oplus 2}$, where $U$ is the even, unimodular, hyperbolic lattice of rank two and $E_8(-1)$ is the negative definite root lattice defined by the Dynkin diagram $E_8$. Moreover, $H^*(S, \IZ)$ carries a weight-two Hodge structure, whose $(1,1)$-part is $H^*_{\textrm{alg}}(S, \IZ) \coloneqq H^0(S, \IZ) \oplus \ns(S) \oplus H^4(S, \IZ)$ (recall that $\ns(S) = H^{1,1}(S) \cap H^2(S, \IZ)$). For any $n \geq 2$, we define the algebraic vector $v = (1, 0, 1-n) \in H^*_{\textrm{alg}}(S, \IZ)$. There exists a canonical isomorphism $\theta^{-1}: H^2(\hsk, \IZ) \ra v^\perp \subset H^*(S, \IZ)$, such that $\theta(v^\perp \cap H^*_{\textrm{alg}}(S, \IZ)) = \ns(\hsk)$. In particular, if $\Pic(S) = \IZ H$ the basis $\left\{h, -\delta \right\}$ of $\ns(\hsk)$ defined in \S \ref{sec: intro} is realized as
\[ h = \theta(0,-H,0), \quad -\delta = \theta(1,0,n-1)\]
\noindent where (with an abuse of notation) we still denote by $H$ the first Chern class of the generator of $\Pic(S)$.

The following lemma provides a description of algebraic classes $a \in H^*_{\textrm{alg}}(S, \IZ)$ with given square and pairing with $v = (1, 0, 1-n)$. The proof is left to the reader.

\begin{lemma} \label{lemma: alg Mukai classes}
Let $S$ be an algebraic $K3$ surface with $\Pic(S) = \IZ H$. An element $a \in H^*_{\textrm{alg}}(S, \IZ)$ has $a^2 = 2\rho$ and $(v,a) = \alpha$, with $\rho, \alpha \in \IZ$, if and only if it is of the form 
\[ a = \left( \frac{X + \alpha}{2(n-1)}, -YH , \frac{X - \alpha}{2} \right) \qquad \text{or} \qquad a = -\left( \frac{X - \alpha}{2(n-1)}, -YH , \frac{X + \alpha}{2} \right)\]
\noindent where $(X,Y)$ is a solution of Pell's equation $X^2 - 4t(n-1)Y^2 = \alpha^2 - 4\rho(n-1)$ such that $X \geq 0$ and $2(n-1) \mid (X + \alpha)$ or $2(n-1) \mid (X - \alpha)$ respectively.
\end{lemma}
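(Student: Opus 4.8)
The plan is to translate both conditions into coordinates on $H^*_{\textrm{alg}}(S,\IZ)$. Since $\Pic(S)=\IZ H$ with $H^2=2t$, one has $\ns(S)=\IZ H$, so every $a\in H^*_{\textrm{alg}}(S,\IZ)$ is uniquely of the form $a=(r,dH,s)$ with $r,d,s\in\IZ$. Using the Mukai pairing $\bigl((r,l,s),(r',l',s')\bigr)=l\cdot l'-rs'-r's$, I would first compute
\[ a^2 = d^2H^2 - 2rs = 2\bigl(td^2-rs\bigr), \qquad (v,a) = (n-1)r-s, \]
so that $a^2=2\rho$ and $(v,a)=\alpha$ are equivalent to the pair of equations $td^2-rs=\rho$ and $s=(n-1)r-\alpha$.

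Next I would eliminate $s$: substituting $s=(n-1)r-\alpha$ into $td^2-rs=\rho$ gives $td^2-(n-1)r^2+\alpha r-\rho=0$, and multiplying by $4(n-1)$ and completing the square in $r$ yields the identity
\[ \bigl(2(n-1)r-\alpha\bigr)^2 - 4t(n-1)d^2 = \alpha^2 - 4\rho(n-1). \]
Thus, setting $X\coloneqq|2(n-1)r-\alpha|\ge 0$ and letting $Y$ be $d$ up to sign (only $Y^2$ occurs), the pair $(X,Y)$ is a solution of the Pell equation in the statement. To read off the normal form I would distinguish the sign of $2(n-1)r-\alpha$: if it is $\ge 0$ then $X=2(n-1)r-\alpha$, so $2(n-1)\mid X+\alpha$, and inverting the substitutions gives $r=\tfrac{X+\alpha}{2(n-1)}$ and $s=(n-1)r-\alpha=\tfrac{X-\alpha}{2}$, which is the first displayed expression; if it is $<0$ then $X=\alpha-2(n-1)r$, so $2(n-1)\mid X-\alpha$, and $r=-\tfrac{X-\alpha}{2(n-1)}$, $s=-\tfrac{X+\alpha}{2}$, which (after factoring out $-1$) is the second expression. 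The degenerate value $X=0$ lies in both cases, and the two expressions then agree.

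For the converse I would run the same computation backwards. Given a solution $(X,Y)$ with $X\ge0$ and the corresponding divisibility hypothesis, the candidate vector has integer entries: the Pell equation forces $X^2\equiv\alpha^2\pmod 4$, hence $X\equiv\alpha\pmod 2$ so that $\tfrac{X\pm\alpha}{2}\in\IZ$, while $2(n-1)\mid X+\alpha$ (resp.\ $2(n-1)\mid X-\alpha$) makes the first coordinate an integer; substituting this vector into the two identities above then gives $a^2=2\rho$ and $(v,a)=\alpha$. I do not expect a genuine obstacle here — the whole argument is a mechanical unwinding of the Mukai pairing — and the only point that needs a little attention is keeping the sign conventions straight, so that the two normal forms together with the parity observation $X\equiv\alpha\pmod 2$ really do account for every solution, the degenerate case $X=0$ included.
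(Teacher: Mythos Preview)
Your argument is correct and is precisely the direct computation the paper has in mind; indeed, the paper leaves this proof to the reader, so there is nothing further to compare. The only cosmetic point is that in your case split the boundary $2(n-1)r-\alpha=0$ is already handled by the first case, so your remark about $X=0$ lying ``in both cases'' is really the (harmless) observation that the two displayed normal forms coincide there up to the free sign of $Y$.
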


Let $\post(\hsk)$ be the \emph{positive cone} of $\hsk$, i.e.\ the connected component of $\left\{ x \in \ns(\hsk) \otimes_\IZ \IR \mid (x,x) > 0 \right\}$ containing K\"ahler classes. The \emph{movable cone} of $\hsk$ is the (open) cone $\movb(\hsk) \subset \ns(\hsk) \otimes_\IZ \IR$ generated by the classes of divisors $D$ such that the linear system $\left| D \right|$ has no divisorial base locus. In particular, $\movb(\hsk) \subset \post(\hsk)$ and $\movb(\hsk)$ contains the ample cone $\mathcal{A}_{\hsk}$. The structures of these cones were studied extensively by Bayer and Macr{\`i}: we recall their main results. 

\begin{theorem}\cite[Theorem 12.3]{bayer_macri_mmp} \label{thm_bm_movb}
The movable cone $\movb(\hsk)$ is one of the open chambers of the decomposition of $\overline{\post(\hsk)}$ whose walls are the linear subspaces $\theta(v^\perp \cap a^\perp)$ for $a \in H^*_{\textrm{alg}}(S, \IZ)$ and
\begin{enumerate}
\item[\textit{(i)}] $a^2 = -2$, $(v,a) = 0$, or
\item[\textit{(ii)}] $a^2 = 0$, $(v,a) = 1$, or
\item[\textit{(iii)}] $a^2 = 0$, $(v,a) = 2$.
\end{enumerate}
\end{theorem}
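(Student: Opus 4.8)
This is \cite[Theorem 12.3]{bayer_macri_mmp}; I outline the strategy of the proof. The first step is to realize $\hsk$ as a moduli space of Bridgeland-stable objects on $S$ with primitive Mukai vector $v = (1,0,1-n)$, so that $v^2 = 2(n-1) \geq 2$: for a stability condition $\sigma_0$ in a suitable chamber of $\mathrm{Stab}^\dagger(S)$ one has $M_{\sigma_0}(v) \cong \hsk$. Under this identification the Mukai homomorphism is a Hodge isometry $v^\perp \otimes \IR \xrightarrow{\sim} \ns(\hsk) \otimes \IR$, agreeing up to sign with the map $\theta$ of \S\ref{sec: intro cones}, which carries the positive cone of $v^\perp$ onto $\post(\hsk)$; so the problem becomes to describe the image of $\movb(\hsk)$.

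I would then vary $\sigma$ throughout $\mathrm{Stab}^\dagger(S)$ and follow the moduli space $M_\sigma(v)$. By the wall-and-chamber structure of Bridgeland stability, $\mathrm{Stab}^\dagger(S)$ decomposes locally finitely into chambers on which $M_\sigma(v)$ is constant, and crossing a wall induces a birational map between the two moduli spaces; each $M_\sigma(v)$ is again a hyperk\"ahler manifold of $\hskn$-type birational to $\hsk$. The crucial device is the assignment to each $\sigma$ of a canonical nef class $\ell_\sigma \in \ns(M_\sigma(v))$, which yields a piecewise-linear map from the chamber decomposition onto $\overline{\movb(\hsk)}$: each chamber maps onto the nef cone of the corresponding birational model, and $\movb(\hsk)$ is the union of these open cones, glued along the flopping walls between adjacent chambers. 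Consequently, the boundary walls of $\movb(\hsk)$ inside $\overline{\post(\hsk)}$ are exactly the images of those walls of $\mathrm{Stab}^\dagger(S)$ at which the wall-crossing is \emph{divisorial}, rather than an isomorphism or a flop.

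The remaining step is to classify the divisorial walls. To each wall $\mathcal{W}$ one attaches the rank-two hyperbolic sublattice $\mathcal{H}_{\mathcal{W}} \subset H^*_{\mathrm{alg}}(S,\IZ)$ spanned by $v$ and the Mukai vectors of the Jordan--H\"older factors of the destabilized objects; Bayer--Macr\`i's classification of walls then shows that the crossing at $\mathcal{W}$ contracts a divisor precisely when $\mathcal{H}_{\mathcal{W}}$ contains a class $a$ with $a^2 = -2$ and $(v,a) = 0$ (Brill--Noether contractions), or $a^2 = 0$ and $(v,a) = 1$ (the Hilbert--Chow contraction $\hsk \ra S^{(n)}$ and its analogues on the birational models), or $a^2 = 0$ and $(v,a) = 2$ (Li--Gieseker--Uhlenbeck contractions). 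Since $\mathcal{W}$ maps onto $\theta(v^\perp \cap a^\perp) \cap \overline{\post(\hsk)}$, these are precisely the hyperplanes bounding $\movb(\hsk)$, and $\movb(\hsk)$ is the chamber of the resulting decomposition that contains the nef cone.

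The hard part is twofold. First, one must show that the map $\sigma \mapsto \ell_\sigma$ actually surjects onto $\overline{\movb(\hsk)}$ --- that every boundary ray of the movable cone is hit by a wall of $\mathrm{Stab}^\dagger(S)$, not merely that walls land in the boundary; this uses the projectivity of the $M_\sigma(v)$ and a deformation argument relating these Bridgeland moduli spaces both to $\hsk$ and to Gieseker moduli spaces. Second, the wall classification demands a careful case analysis to confirm that the three listed types of classes are the only ones forcing a divisorial contraction: one has to verify that spherical classes $a$ with $a^2 = -2$, $(v,a) \neq 0$, and isotropic classes with $(v,a) \geq 3$, induce at worst flops, so that the corresponding rays lie in the interior of $\movb(\hsk)$. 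It is in these numerical estimates that the hypotheses $v^2 = 2(n-1) \geq 2$ and $v$ primitive are genuinely used.
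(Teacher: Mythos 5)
This statement is quoted verbatim from Bayer--Macr\`i \cite[Theorem 12.3]{bayer_macri_mmp}; the paper gives no proof of its own, so there is no internal argument to compare yours against. Your sketch is a faithful outline of the Bayer--Macr\`i strategy: realizing $\hsk$ as $M_{\sigma_0}(v)$ for $v=(1,0,1-n)$, using the map $\sigma \mapsto \ell_\sigma$ to carry the wall-and-chamber decomposition of $\mathrm{Stab}^\dagger(S)$ onto $\overline{\movb(\hsk)}$, and identifying the boundary of the movable cone with the divisorial walls, whose three numerical types (Brill--Noether, Hilbert--Chow, Li--Gieseker--Uhlenbeck) are exactly those classified in their Theorem 5.7 and listed in the statement.
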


\begin{theorem}\cite[Theorem 12.1]{bayer_macri_mmp} \label{thm_bm_ample}
The ample cone $\mathcal{A}_{\hsk}$ is one of the open chambers of the decomposition of $\overline{\post(\hsk)}$ whose walls are the linear subspaces $\theta(v^\perp \cap a^\perp)$ for $a \in H^*_{\textrm{alg}}(S, \IZ)$ with $a^2 \geq -2$ and $0 \leq (v, a) \leq n-1$.
\end{theorem}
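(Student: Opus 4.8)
The plan is to realize $\hsk$ as a moduli space of Bridgeland-stable objects on $S$ and to transport the wall-and-chamber structure of the space of stability conditions onto $\overline{\post(\hsk)}$ through the Bayer--Macr\`i map. Fix the Mukai vector $v = (1,0,1-n)$, so that $v^2 = 2(n-1) > 0$; for a generic stability condition $\sigma$ in the distinguished component $\mathrm{Stab}^\dagger(S)$ of the space of stability conditions on $D^b(S)$, the moduli space $M_\sigma(v)$ of $\sigma$-stable objects of class $v$ is isomorphic to $\hsk$. First I would introduce the Bayer--Macr\`i map $\ell \colon \mathrm{Stab}^\dagger(S) \ra \ns(M_\sigma(v)) \otimes_\IZ \IR$ and invoke the Positivity Lemma: for $\sigma$ generic the class $\ell_\sigma$ is ample, and $\ell$ carries the open chamber of $\mathrm{Stab}^\dagger(S)$ containing $\sigma$ onto the ample cone $\mathcal{A}_{\hsk}$. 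This reduces the description of $\mathcal{A}_{\hsk}$ to the identification of the walls of $\mathrm{Stab}^\dagger(S)$ bounding that chamber, since $\ell$ is compatible, via the isometry $\theta$, with the Mukai pairing on $v^\perp$.

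Next I would classify these walls. A wall for the class $v$ is governed by a rank-two primitive sublattice $\mathcal{H} \subset H^*_{\textrm{alg}}(S, \IZ)$ containing $v$: on the wall the generic $\sigma$-stable object of class $v$ is destabilized by a proper sub- or quotient object whose Mukai vector $a$ lies in $\mathcal{H}$. Because this sub/quotient object is itself $\sigma$-semistable, its class satisfies $a^2 \geq -2$, the extremal value $-2$ being realized by spherical classes. The symmetry $a \leftrightarrow v-a$ exchanging a subobject with its quotient, together with the requirement that $a$ be the class of a genuine proper destabilizing factor, confines the pairing to the range $0 \leq (v,a) \leq (v,v)/2 = n-1$ --- precisely the condition in the statement. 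The wall that $a$ cuts out in $\overline{\post(\hsk)}$ is then the hyperplane $\theta(v^\perp \cap a^\perp)$.

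The heart of the matter, and the main obstacle, is to prove that these potential walls are exactly the relevant ones: namely, that the Positivity Lemma holds (so $\ell_\sigma$ is ample off the walls and fails to be ample precisely on them), and that each class $a$ with $a^2 \geq -2$, $0 \leq (v,a) \leq n-1$, meeting $\overline{\post(\hsk)}$, genuinely bounds the chamber $\mathcal{A}_{\hsk}$. This requires the full Bayer--Macr\`i classification of walls into totally semistable, flopping, divisorial and fake walls, together with an analysis --- via spherical twists and the structure of the exceptional locus of the contraction $M_\sigma(v) \ra M_{\sigma_0}(v)$ associated to a wall $\sigma_0$ --- distinguishing walls that actually contract a curve class from those that do not alter the nef chamber. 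Granting this, $\mathcal{A}_{\hsk}$ is recovered as one open chamber of the decomposition of $\overline{\post(\hsk)}$ by the hyperplanes $\theta(v^\perp \cap a^\perp)$ for the stated classes $a$, which is the assertion of the theorem.
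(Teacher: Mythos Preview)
The paper does not prove this statement: Theorem~\ref{thm_bm_ample} is simply quoted from \cite[Theorem 12.1]{bayer_macri_mmp} as background, with no accompanying argument. There is therefore no ``paper's own proof'' to compare against. Your outline is a faithful high-level sketch of the actual Bayer--Macr\`i strategy --- realize $\hsk$ as $M_\sigma(v)$ for $v=(1,0,1-n)$, use the Positivity Lemma to identify the ample cone with the image under $\ell$ of the chamber containing $\sigma$, and classify the bounding walls via the hyperbolic rank-two sublattices $\mathcal{H}=\langle v,a\rangle$ --- so in that sense the approach is correct and is the intended one.

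One small point: your justification of the range $0 \leq (v,a) \leq n-1$ via the symmetry $a \leftrightarrow v-a$ is incomplete as stated. The upper bound $(v,a) \leq v^2/2 = n-1$ (after replacing $a$ by $v-a$ if necessary) is really the condition that the rank-two lattice $\langle v,a\rangle$ has signature $(1,1)$, i.e.\ that the hyperplane $\theta(v^\perp\cap a^\perp)$ actually meets the positive cone; the paper records exactly this in Remark~\ref{rem: flops}, citing \cite[Theorem 5.7]{bayer_macri_mmp}. The destabilizing-subobject interpretation you give is morally right but does not by itself force the numerical inequality without invoking the hyperbolicity of $\mathcal{H}$.
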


The walls of Theorem \ref{thm_bm_ample} contained in $\movb(\hsk)$ provide a decomposition of the movable cone into chambers, which correspond bijectively to smooth birational models of $\hsk$ with trivial canonical bundle, as studied in a more general setting in \cite{hass_tschink_moving} and \cite{bayer_macri_mmp}.

\begin{rem} \label{rem: flops}
The additional walls that need to be considered in the decomposition of Theorem \ref{thm_bm_ample}, with respect to the ones already appearing in Theorem \ref{thm_bm_movb}, are \emph{flopping walls}. They are walls of the form $\theta(v^\perp \cap a^\perp)$, where we can restrict to consider $a$ algebraic such that:
\begin{enumerate}
\item  $a^2 = -2$ and $1 \leq (v,a) \leq n-1$, or
\item $a^2 = 0$ and $3 \leq (v,a) \leq n-1$, or
\item $2 \leq a^2 < \frac{n-1}{2}$ and $2a^2 + 1 \leq (v,a) \leq n-1$.
\end{enumerate}
These bounds for $a^2$ and $(v,a)$ are a consequence of the fact that the rank two sublattice $\langle a,v \rangle \subset H^*(S, \IZ)$ needs to be hyperbolic, in order to define a subspace $\theta(v^\perp \cap a^\perp)$ which intersects the positive cone (see \cite[Theorem 5.7]{bayer_macri_mmp}). Notice in particular that $\mathcal{A}_{\hsk} = \movb(\hsk)$ if and only if there are no flopping walls inside $\movb(\hsk)$.
\end{rem}

\begin{rem} \label{rem: cones1}
If $\sigma \in \bir(\hsk)$ is a birational automorphism of $\hsk$, then $\sigma^*$ leaves the closure of $\movb(\hsk)$ globally invariant and the nef cone $\nef(\hsk)$ (which is the closure of the ample cone in $\ns(\hsk) \otimes_{\IZ} \IR$) is mapped by $\sigma^*$ to one of the closed chambers of the decomposition of $\overline{\movb(\hsk)}$ determined by the walls of Theorem \ref{thm_bm_ample}. If moreover $\sigma$ is biregular (i.e.\ $\sigma \in \aut(X)$), then $\sigma^*(\nef(\hsk)) = \nef(\hsk)$.
\end{rem}

In the case of Hilbert schemes $\hsk$ with $\rk(\ns(\hsk)) = 2$, both the ample cone and the movable cone are delimited by two extremal rays.

\begin{theorem}\cite[Proposition 13.1]{bayer_macri_mmp} \label{thm_bm_mvb hsk}
Let $S$ be an algebraic $K3$ surface with $\Pic(S) = \IZ H$, $H^2 = 2t$ and $n \geq 2$.
\begin{enumerate}
\item If $t(n-1) = c^2$, for $c \in \mathbb{N}$, then $\overline{\movb(\hsk)} = \langle h, (n-1)h - c\delta \rangle$.
\item If $t(n-1)$ is not a square and the equation $(n-1)X^2 - tY^2 = 1$ has integer solutions, let $(z,w)$ be the positive solution with minimal $X$; then \mbox{$\overline{\movb(\hsk)} = \langle h, (n-1)zh - tw\delta \rangle$}.
\item If $t(n-1)$ is not a square and $(n-1)X^2 - tY^2 = 1$ has no integer solutions, then $\overline{\movb(\hsk)} = \langle h, zh - tw\delta \rangle$ where $(z,w)$ is the minimal solution of Pell's equation $X^2 - t(n-1)Y^2 = 1$ with $X \equiv \pm 1$ (mod $n-1$). 
\end{enumerate}
\end{theorem}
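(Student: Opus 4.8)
The plan is to deduce the explicit description from the wall-crossing picture of Theorem \ref{thm_bm_movb}, by turning the condition ``$\theta(v^\perp\cap a^\perp)$ is a wall'' into a system of Pell's equations and then selecting, among all walls, the one bounding the chamber that contains the ample classes. Fix the basis $\{h,-\delta\}$, so that the Beauville--Bogomolov form on $\ns(\hsk)$ is $\mathrm{diag}(2t,-2(n-1))$ and a class $xh-y\delta$ has square $2tx^2-2(n-1)y^2$. The connected component of $\post(\hsk)$ containing $h$ consists of the rays of slope $y/x\in\bigl(-\sqrt{t/(n-1)},\sqrt{t/(n-1)}\bigr)$, with $h$ itself at slope $0$. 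First I would record that $h$ already lies on a wall: the algebraic class $a=(0,0,-1)$ has $a^2=0$ and $(v,a)=1$, hence is of type (ii) in Theorem \ref{thm_bm_movb}, and $(\theta^{-1}(h),a)=\bigl((0,-H,0),(0,0,-1)\bigr)=0$. So $h$ spans one extremal ray of $\overline{\movb(\hsk)}$ (it is the nef, non-ample class contracted by the Hilbert--Chow morphism). Since every ample class has strictly positive slope, the movable cone is the chamber adjacent to $h$ on the positive-slope side, and its second extremal ray $R$ is carried by the wall of \emph{smallest positive slope} among all walls of Theorem \ref{thm_bm_movb}.

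Next I would compute slopes. Writing a wall as $\theta(v^\perp\cap a^\perp)$, using $\theta^{-1}(xh-y\delta)=(y,-xH,y(n-1))$ and the parametrisation of Lemma \ref{lemma: alg Mukai classes} for $a$ with $a^2=2\rho$, $(v,a)=\alpha$, a direct substitution into the Mukai pairing shows the wall is the ray of slope $y/x=2tY/X$, where $(X,Y)$ solves $X^2-4t(n-1)Y^2=\alpha^2-4\rho(n-1)$ with the divisibility recorded in the lemma. Running over the three types, and clearing the squares by the substitutions $X\mapsto 2(n-1)X$ and $X\mapsto 2X$, this gives:
\begin{itemize}
\item type (i) ($\rho=-1,\alpha=0$): solutions of $(n-1)X^2-tY^2=1$, wall slope $\tfrac{tY}{(n-1)X}$;
\item type (iii) ($\rho=0,\alpha=2$): solutions of $X^2-t(n-1)Y^2=1$ with $X\equiv\pm1\pmod{n-1}$, wall slope $\tfrac{tY}{X}$;
\item type (ii) ($\rho=0,\alpha=1$): solutions of $X^2-4t(n-1)Y^2=1$ with $X\equiv\pm1\pmod{2(n-1)}$, wall slope $\tfrac{2tY}{X}$.
\end{itemize}
Because the slope of a solution of a Pell equation with positive right-hand side increases with $X$ (Remark \ref{rem: pell slope}), the minimal positive slope \emph{inside each family} is attained at the solution of smallest $X$ compatible with the congruence. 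Moreover every type (ii) wall is also a type (iii) wall: if $(X,Y)$ solves $X^2-4t(n-1)Y^2=1$ then $(X,2Y)$ solves $X^2-t(n-1)Y^2=1$ with the same slope and with $X\equiv\pm1\pmod{n-1}$. Hence type (ii) may be discarded and only types (i) and (iii) compete.

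I would then split into the three cases. If $t(n-1)=c^2$ is a square, then $4t(n-1)$ is a square, each equation above factors as a difference of squares and has only finitely many solutions; a short check on the factorisations shows that none with $Y>0$ meets the required divisibility. Thus there is no wall of positive slope strictly inside $\post(\hsk)$, the chamber adjacent to $h$ runs all the way to the boundary ray of slope $c/(n-1)=\sqrt{t/(n-1)}$, and $\overline{\movb(\hsk)}=\langle h,(n-1)h-c\delta\rangle$. If $t(n-1)$ is not a square and $(n-1)X^2-tY^2=1$ is solvable with minimal solution $(z,w)$, then type (i) is nonempty and contributes the wall of slope $\tfrac{tw}{(n-1)z}$, i.e.\ the ray $(n-1)zh-tw\delta$, which I claim is the global minimum, giving case (2). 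If $(n-1)X^2-tY^2=1$ has no solution, type (i) is empty and the minimum comes from type (iii): the minimal solution $(z,w)$ of $X^2-t(n-1)Y^2=1$ with $X\equiv\pm1\pmod{n-1}$ yields the wall $zh-tw\delta$, giving case (3).

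The main obstacle is the cross-family comparison in case (2): one must show the type (i) minimal slope $\tfrac{tw}{(n-1)z}$ is strictly smaller than every type (iii) slope. The natural approach is to pass to $\IQ(\sqrt{t(n-1)})$: a solution $(z,w)$ of $(n-1)z^2-tw^2=1$ produces the norm-one unit $\bigl((n-1)z^2+tw^2\bigr)+2zw\sqrt{t(n-1)}$, i.e.\ a type (iii) solution with $X\equiv-1\pmod{n-1}$ and slope $\tfrac{2tzw}{(n-1)z^2+tw^2}$, which is larger than $\tfrac{tw}{(n-1)z}$ since $(n-1)z^2>tw^2$. The delicate point is to prove that this is already the \emph{smallest} admissible type (iii) solution, so that no type (iii) wall slips below the type (i) one; here Lemma \ref{lemma: pell} (relating the fundamental units of $(n-1)X^2-tY^2=\pm1$ and of $X^2-t(n-1)Y^2=1$) and the equivalence-class bookkeeping of Remark \ref{rem: fundamental solutions} are essential, and the congruences $X\equiv\pm1$ must be tracked carefully. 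Once this comparison is settled, the second extremal ray is located exactly as stated, and reading off the primitive lattice points on the two bounding walls of $\ns(\hsk)=\IZ h\oplus\IZ(-\delta)$ gives the generators in the three cases.
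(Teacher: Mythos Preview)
The paper does not contain a proof of this statement: Theorem~\ref{thm_bm_mvb hsk} is quoted from \cite[Proposition~13.1]{bayer_macri_mmp} and used as a black box, with no argument given in the present paper. There is therefore no proof here to compare your proposal against.

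That said, your strategy---translating the wall conditions of Theorem~\ref{thm_bm_movb} into Pell equations via Lemma~\ref{lemma: alg Mukai classes}, noting that $h$ sits on a Hilbert--Chow wall, and then locating the second extremal ray as the wall of smallest positive slope---is exactly the mechanism behind Bayer--Macr\`i's own proof in \cite{bayer_macri_mmp}. The slope computations and the reduction of type~(ii) to type~(iii) are correct. Your honest assessment of the difficulty is also accurate: the substantive step is the cross-comparison in case~(2), namely showing that whenever $(n-1)X^2-tY^2=1$ is solvable, its minimal solution gives a strictly smaller slope than \emph{every} admissible solution of $X^2-t(n-1)Y^2=1$. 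You observe correctly that the squared unit $\bigl((n-1)z^2+tw^2,\,2zw\bigr)$ is a type~(iii) solution of larger slope, but you still need that it is the \emph{minimal} such solution; this is a statement about fundamental units in $\IZ[\sqrt{t(n-1)}]$ and is where the real number theory lies. Your case~(1) is also more delicate than ``a short check'': one must genuinely rule out positive solutions of $(n-1)X^2-tY^2=1$ when $t(n-1)$ is a perfect square, and the factorisation argument needs the divisor pairs to be handled carefully. So the outline is right, but the two places you flag (and hand-wave) are precisely where the work is.
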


In the hypothesis of the theorem, the extremal ray of $\overline{\movb(\hsk)}$ generated by $h$ is also one of the two walls delimiting $\nef(\hsk)$. If $\nef(\hsk) \neq \overline{\movb(\hsk)}$, by Remark \ref{rem: flops} the second wall of the nef cone coincides with the flopping wall with minimal slope inside $\movb(\hsk)$. 

\section{Ample classes and isometries of $\ns ( S^{\left[ n\right]})$} \label{sec: isometries}

In this section we determine the structure of the group of isometries $O(\ns(\hsk))$. We adopt a similar approach to the one used by the authors of \cite{bcnws} for the case $n=2$, in order to generalize their results.

\begin{defi}
Let $S$ be a smooth complex surface and $k$ a non-negative integer. A line bundle $L \in \Pic(S)$ is \emph{$k$-very ample} if the restriction $H^0(S,L) \ra H^0(S, L \otimes \mathcal{O}_Z)$ is surjective for any zero-dimensional subscheme $(Z, \mathcal{O}_Z)$ of length $h^0(\mathcal{O}_Z) \leq k+1$.
\end{defi}

We recall the following geometric interpretation of $k$-very ampleness. For any subscheme $(Z, \mathcal{O}_Z)$ we have the inclusion $H^0(S, L \otimes I_Z) \hookrightarrow H^0(S,L)$, which allows us to define a rational map $\gamma: S^{\left[k\right]} \dashrightarrow \grass(k, H^0(S,L))$. Then, $\gamma$ is an embedding if and only if $L$ is $k$-very ample (\cite[Main Theorem]{catanese_gott}).

\begin{prop} \cite[Theorem 1.1]{knutsen} \label{knut very-ample}
Let $S$ be a $K3$ surface, $L$ a big and nef line bundle on $S$ and $k \geq 0$ an integer. Then $L$ is $k$-very ample if and only if $L^2 \geq 4k$ and there exist no effective divisors $D$ such that:
\begin{enumerate}
\item $2D^2 \stackrel{(i)}{\leq} L \cdot D \leq D^2 + k + 1 \stackrel{(ii)}{\leq} 2k+2$;
\item $(i)$ is an equality if and only if $L \sim 2D$ and $L^2 \leq 4k+4$;
\item $(ii)$ is an equality if and only if $L \sim 2D$ and $L^2 = 4k+4$.
\end{enumerate}
\end{prop}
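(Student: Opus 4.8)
\emph{Reformulation.} The statement is Knutsen's theorem, so my plan is to reconstruct his argument. For a zero-dimensional subscheme $Z$ with $h^0(\mathcal{O}_Z) \le k+1$, surjectivity of $H^0(S,L) \ra H^0(S, L \otimes \mathcal{O}_Z)$ is equivalent, via the exact sequence $0 \ra L \otimes I_Z \ra L \ra L \otimes \mathcal{O}_Z \ra 0$, to the vanishing $H^1(L \otimes I_Z) = 0$ (note $H^1(L) = 0$ since $L$ is big and nef and nontrivial). So $L$ is $k$-very ample if and only if $H^1(L \otimes I_Z) = 0$ for every $Z$ of length $\le k+1$, and everything reduces to controlling this $H^1$. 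Equivalently, one may phrase the whole proof in terms of the rational map $\gamma \colon S^{[k]} \dashrightarrow \grass(k, H^0(S,L))$ being a closed embedding, as recalled above.

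\emph{Necessity of the conditions.} Assume $L$ is $k$-very ample. Testing against a length-$(k+1)$ subscheme concentrated at a point (or supported on a smooth member of $|L|$, reducing to a gonality/Clifford-type estimate on that curve of genus $1 + L^2/2$) and using Riemann--Roch on $S$ forces $L^2$ to be large, giving $L^2 \ge 4k$; the borderline case, where $L$ is divisible in $\Pic(S)$, is precisely what the equality clauses (2)--(3) record. For the divisorial condition, suppose an effective $D$ satisfying (1)--(3) exists. Restricting along $0 \ra L - D \ra L \ra L|_D \ra 0$ and choosing the points of a length-$(k+1)$ subscheme $Z$ to lie on the curve $D$ (and, in the equality cases, inside a member of $|2D| = |L|$), one computes from the cohomology of $L|_D$ that $H^1(L \otimes I_Z) \neq 0$; the numerical inequalities in (1) are exactly what makes this $Z$ available with the right length, and the two biconditionals in (2), (3) pin down when the extremal choices $L \cdot D = 2D^2$ and $D^2 + k + 1 = 2k+2$ can be realized, namely when $L \sim 2D$.

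\emph{Sufficiency.} Conversely, assume $L^2 \ge 4k$ and that no such $D$ exists, and suppose for contradiction that some $Z$ with length $\le k+1$ has $H^1(L \otimes I_Z) \neq 0$. After shrinking $Z$, take a minimal subscheme $Z' \subseteq Z$ that is Cayley--Bacharach with respect to $|L|$; here one uses $\omega_S = \mathcal{O}_S$, so that Serre duality gives $\mathrm{Ext}^1(I_{Z'} \otimes L, \mathcal{O}_S) \cong H^1(L \otimes I_{Z'})^\vee \neq 0$, and the Serre construction produces a rank-two vector bundle $E$ fitting in $0 \ra \mathcal{O}_S \ra E \ra I_{Z'} \otimes L \ra 0$, so that $c_1(E) = L$ and $c_2(E) = \ell \coloneqq \mathrm{length}(Z') \le k+1$. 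Now apply the Bogomolov dichotomy to $E$. If $E$ is $\mu$-semistable, Bogomolov's inequality gives $L^2 = c_1(E)^2 \le 4 c_2(E) \le 4(k+1)$, which together with $L^2 \ge 4k$, the vanishing $\Hom(E, \mathcal{O}_S) = 0$ (from semistability and $\mu(E) > 0$) and the computation $\chi(E) = \tfrac12 L^2 - \ell + 4$ confines us to a few boundary values of $L^2$ handled directly. If $E$ is not $\mu$-semistable, its maximal destabilizing sub-line-bundle $\mathcal{O}_S(D) \hookrightarrow E$ yields $0 \ra \mathcal{O}_S(D) \ra E \ra I_W \otimes \mathcal{O}_S(L - D) \ra 0$ with $W$ zero-dimensional; the composite $\mathcal{O}_S(D) \hookrightarrow E \ra I_{Z'} \otimes L$ is nonzero (otherwise $\mathcal{O}_S(D) \subseteq \mathcal{O}_S$, contradicting destabilization), so $L - D$ is effective, $D$ may be taken effective, and the instability inequality $(2D - L) \cdot H > 0$ together with $\ell = D \cdot (L - D) + \mathrm{length}(W)$ translate exactly into the chain of inequalities (1) for $D$ --- contradicting the hypothesis. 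The residual boundary configurations (equality in Bogomolov's inequality, $E$ on the wall) are precisely where $L \equiv 2D$, and are excluded by conditions (2)--(3).

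\emph{Main obstacle.} The delicate point is the Cayley--Bacharach reduction together with the bookkeeping of these boundary cases: one must handle non-reduced, non-curvilinear $Z'$ carefully so that $E$ is genuinely locally free, and then show that the extremal inequalities in (1) can become equalities only in the $L \sim 2D$ situation. This last step --- the content of clauses (2) and (3) --- does not follow from the soft Bogomolov argument alone and requires the explicit analysis of $|2D| = |L|$ and of $H^1(L \otimes I_{Z'})$ in that range.
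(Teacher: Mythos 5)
This proposition is quoted verbatim from Knutsen (\cite[Theorem 1.1]{knutsen}); the paper offers no proof of its own, so there is nothing internal to compare your argument against. Your outline does follow the strategy that underlies Knutsen's result (and the earlier Beltrametti--Sommese/Terakawa criteria): reduce $k$-very ampleness to the vanishing of $H^1(L \otimes I_Z)$ for all $Z$ of length $\leq k+1$, pass to a minimal (hence Cayley--Bacharach) subscheme, build a rank-two bundle $E$ by the Serre construction, and run the Bogomolov/Reider dichotomy so that instability produces the offending effective divisor $D$. That skeleton is correct, and the individual computations you cite ($c_2(E)=\ell$, $\chi(E)=\tfrac12 L^2-\ell+4$, effectivity of $L-D$ from the nonzero composite) are right.

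However, as a proof the proposal has a genuine gap, and it is exactly where Knutsen's theorem differs from the soft general criterion. First, the chain (1) is not actually derived: you get $L\cdot D \leq D^2+k+1$ from $D\cdot(L-D)\leq \ell$, but the inequalities $2D^2 \leq L\cdot D$ and $D^2+k+1\leq 2k+2$ require the Hodge-index/positive-cone argument applied to $2D-L$, which you do not carry out. Second, and more seriously, the biconditional equality clauses (2)--(3) and the semistable boundary cases $4k \leq L^2 \leq 4(k+1)$ are precisely the sharp content of the statement (they are what make the criterion an ``if and only if'' rather than a sufficient condition), and your text disposes of them with ``handled directly'' and ``excluded by conditions (2)--(3)'' --- i.e., the conclusion is invoked rather than proved. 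The necessity direction is likewise only gestured at: the claim $L^2 \geq 4k$ needs the existence half of Brill--Noether (a $g^1_{k+1}$ on a smooth member of $|L|$ when $L^2 \leq 4k-2$) together with the surjectivity of $H^0(S,L)\to H^0(C,\omega_C)$, and the necessity of the divisorial conditions needs an explicit computation of $h^1(L\otimes I_Z)$ for $Z$ chosen on $D$, none of which is supplied. In short: correct strategy, but the distinctive quantitative assertions of the theorem are left unverified, so this is an outline rather than a proof.
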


\begin{prop} \label{ample_classes}
Let $S$ be an algebraic $K3$ surface with $\Pic(S)=\IZ H$, $H^2 = 2t$, $t \geq 1$. The class $ah - \delta \in \ns(\hsk)$ is ample if $a \geq n+2$. In particular, inside $\ns(\hsk) \otimes_\IZ \IR$, the ample cone $\mathcal{A}_{\hsk}$ is contained in $\left\{ xh - y\delta \mid x >0, y>0 \right\}$.
\end{prop}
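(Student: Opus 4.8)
The strategy is to produce an explicit ample divisor on $\hsk$ coming from a $k$-very ample line bundle on $S$, then use convexity of the ample cone to conclude. First I would show that for $a$ large enough the line bundle $aH$ on $S$ is $n$-very ample: since $\Pic(S) = \IZ H$ with $H^2 = 2t \geq 2$, any effective divisor is of the form $D = mH$ with $m \geq 1$, so the prohibited configurations in Proposition \ref{knut very-ample} require $a(2t)m = aH\cdot D \leq D^2 + n + 1 = 2tm^2 + n+1$; for fixed $a$ this forces $m$ (and hence $D$) into a bounded range, and a short estimate shows that once $a \geq n+2$ the inequality $2D^2 \leq L\cdot D$, i.e.\ $4tm^2 \leq 2atm$, combined with $L\cdot D \leq 2n+2$ has no solution with $m \geq 1$ (already $L \cdot D = 2atm \geq 2(n+2)t \geq 2n+4 > 2n+2$). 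The condition $L^2 = 4a^2 t^2 \geq 4n$ is likewise immediate for $a \geq n+2$. Hence $aH$ is $n$-very ample for all $a \geq n+2$.

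Next I would invoke the geometric interpretation recalled before Proposition \ref{knut very-ample}: if $L = aH$ is $n$-very ample, the map $\gamma \colon \hsk \to \grass(n, H^0(S,L))$ sending a length-$n$ subscheme $Z$ to $H^0(S, L\otimes I_Z)$ is a closed embedding, so the pullback under $\gamma$ of the Plücker polarization is an ample class on $\hsk$. It remains to identify this pullback, up to positive scalar, with $ah - \delta$ in the basis $\{h, -\delta\}$. This is the standard computation (see e.g.\ the determinant line bundle description of the tautological bundle on the Hilbert scheme): the Plücker line bundle pulls back to $\det$ of the tautological rank-$n$ bundle $L^{[n]}$, whose first Chern class is $a h - \delta$ with our normalization $h = c_1(\widetilde H)$ and $2\delta$ the exceptional divisor of the Hilbert–Chow morphism. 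I would spell out this Chern class identity and record that therefore $ah - \delta \in \mathcal{A}_{\hsk}$ whenever $a \geq n+2$.

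For the final assertion about the shape of the ample cone, I would argue as follows. We know $\mathcal{A}_{\hsk} \subset \post(\hsk)$, and $\post(\hsk)$ is the component of $\{(x,x)>0\}$ containing Kähler classes; writing a class as $xh - y\delta$ with $h^2 = 2t > 0$, $\delta^2 = -2(n-1) < 0$, $(h,\delta)=0$, the positive cone is $\{ tx^2 - (n-1)y^2 > 0\}$ and breaks into two components according to the sign of $x$ (the cone contains no class with $x=0$ since such a class has negative square); the component containing $h$ (which is nef, being the pullback of the ample $\widetilde H$) is $\{x > 0\}$. Thus every ample class has $x>0$. To get $y>0$, I would use that the ample cone is convex and open, that it contains both $h$ (on the boundary ray $y=0$ of the positive cone's closure, with $x>0$) and the interior class $(n+2)h - \delta$ (which has $y = 1 > 0$); since $\mathcal{A}_{\hsk}$ is a chamber bounded by the extremal ray $\IR_{>0} h$ on one side (this is Theorem \ref{thm_bm_mvb hsk} together with the remark that follows it, which identifies $h$ as one wall of $\nef(\hsk)$) and cannot cross into the region $\{y < 0\}$ without leaving $\overline{\post(\hsk)}$, every ample class lies in $\{x>0, y>0\}$. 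Alternatively, and more self-containedly, any ample class $D$ satisfies $D \cdot C > 0$ for the class $C$ of a rational curve contracted by the Hilbert–Chow morphism, and that curve class pairs positively with $\delta$, forcing the $-\delta$-coefficient to be positive.

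The main obstacle is the bookkeeping in the first two steps: carefully ruling out \emph{all} the exceptional divisor configurations of Proposition \ref{knut very-ample} for $a \geq n+2$ (including the boundary cases where $L \sim 2D$), and pinning down the precise normalization so that the Plücker pullback is exactly $ah - \delta$ rather than some other positive combination. The cone-shape conclusion, by contrast, is essentially formal once one ample class of the desired form is in hand.
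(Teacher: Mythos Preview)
Your proposal is correct and follows essentially the same route as the paper: verify Knutsen's criterion to show $aH$ is $n$-very ample for $a \geq n+2$, pull back the Pl\"ucker line bundle via the Grassmannian embedding to get the ample class $ah-\delta$, then use convexity together with the non-ampleness of $h$ and $-\delta$ to pin down the quadrant. The paper's treatment of the last step is terser than yours (it simply notes that $h$ and $-\delta$ are not ample and invokes convexity), and it cites \cite{betram_coskun} for the identification of the pullback rather than recomputing $c_1(L^{[n]})$; one small slip in your write-up is $L^2 = (aH)^2 = 2ta^2$, not $4a^2t^2$, but the needed inequality $2ta^2 \geq 4n$ is of course still immediate.
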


\begin{proof}
If $L = aH \in \Pic(S)$ is $n$-very ample, then the element $ah - \delta \in \ns(\hsk)$ is ample, because it is the first Chern class of the line bundle $\gamma^*(\mathcal{O}_{\mathbb{G}}(1))$, where we consider the embedding $\gamma: \hsk \ra \mathbb{G} \coloneqq \grass(n, H^0(S,L))$ obtained from $L$ (see \cite[Construction 2]{betram_coskun}). We now use Knutsen's characterization of $n$-very ample line bundles (Proposition \ref{knut very-ample}). We have $L^2 \geq 4n$ if and only if $ta^2 \geq 2n$. If $D = dH$ is effective ($d > 0$), we can reformulate condition $(1)$ of Proposition \ref{knut very-ample} as
\[ 4td^2 \stackrel{(i)}{\leq} 2tad \leq 2td^2+n+1 \stackrel{(ii)}{\leq} 2n+2.\]
It is easy to see that, if $a \geq n+2$, there are no values $d$ satisfying all these inequalities, while instead the condition $L^2 \geq 4n$ holds.

Thus, for all $a \geq n+2$ the line bundle $L = aH$ is $n$-very ample and $ah - \delta \in \mathcal{A}_{\hsk}$. Moreover, the two classes $h,-\delta \in \ns(\hsk)$ are not ample, therefore we conclude $\mathcal{A}_{\hsk} \subseteq \left\{ xh - y\delta \mid x >0, y>0 \right\}$, because $\mathcal{A}_{\hsk}$ is a convex cone. 
\end{proof} 

We are now interested in describing the isometries of the N\'eron-Severi lattice of $\hsk$.
With respect to the basis $\left\{ h, -\delta \right\}$, the bilinear form on $\ns(\hsk)$ satisfies $(h,h) = 2t, (-\delta, -\delta) = -2(n-1), (h,-\delta) = 0$.  Thus, in coordinates, an isometry $\phi \in O(\ns(\hsk))$ is represented by a matrix of the form

\[ M = \begin{pmatrix}
A & B \\
C & D
\end{pmatrix} \]

\noindent with $A, B, C, D$ integers such that:

\begin{enumerate}
\item $\det M = \pm 1$, i.e.\ $AD - BC = \pm 1$;
\item $(Ah - C\delta, Ah - C\delta) = 2t$ , i.e.\ $(n-1)C^2 = t(A^2 - 1)$; 
\item $(Bh - D\delta, Bh - D\delta) = -2(n-1)$, i.e.\ $\left( n-1\right)(D^2 -1)= tB^2$;
\item $(Ah - C \delta, Bh - D\delta) = 0$, i.e.\ $\left( n-1\right)CD = tAB$.
\end{enumerate}

From this list of conditions, we find two alternative structures for $M$:
\[\begin{pmatrix}
A & B \\
C & A
\end{pmatrix} \; \text{or} \; \begin{pmatrix}
A & B \\
-C & -A
\end{pmatrix}, \quad \text{with } (n-1)C = tB \; \text{and } (n-1)A^2 - tB^2 = n-1.\]

Matrices of the first form have determinant $+1$, while those of the second form have determinant $-1$; moreover, for $M = \begin{pmatrix}
A & B\\
-C & -A
\end{pmatrix}$ we have $M^2 = I_2$ (from the relations between $A,B,C$), meaning that the isometry described by $M$ is an involution.

Notice that we can write $O( \ns ( S^{\left[ n\right]})) \cong N \rtimes \langle s \rangle$, where $N$ is the normal subgroup of isometries of the first form:
\[N := \left\{\begin{pmatrix}
A & B \\
C & A
\end{pmatrix} \mid A,B,C \in \mathbb{Z}, (n-1)C = tB, (n-1)A^2 - tB^2 = n-1 \right\} \]
\noindent while $s$ is the order two matrix $s := \begin{pmatrix}
1 & 0 \\
0 & -1
\end{pmatrix} \in O( \ns ( S^{\left[ n\right]}))$.

An automorphism $f \in \aut(\hsk)$ induces, by pull-back, an isometry $f^*$ of the lattice $H^2(\hsk, \IZ)$, such that $f^*\vert_{\ns(\hsk)} \in O(\ns(\hsk))$. As proved for the case $n=2$ in \cite[Proposition 4.3]{bcnws}, there is a link between such isometries and the ample cone of $\hsk$.

\begin{prop} \label{unique_isometry}
Let $S$ be an algebraic $K3$ surface with $\Pic(S) = \IZ H$, $H^2 = 2t$, $t \geq 1$. The isometry of $\ns(\hsk)$ induced by $f \in \aut(\hsk)$ is either the identity or the involution described, with respect to the basis $\left\{ h, -\delta \right\}$, by the matrix
\begin{equation*}
\begin{pmatrix}
A & B \\
-C & -A
\end{pmatrix}, \quad \text{with } (n-1)C = tB, \; (n-1)A^2 - tB^2 = n-1, \; A > 0, \; B<0
\end{equation*}
\noindent and with $A,B$ defining the ample cone of $\hsk$:
\[ \mathcal{A}_{\hsk} = \left\{ xh -y \delta \mid y>0, (n-1)Ay < -tBx\right\}.\] 
\end{prop}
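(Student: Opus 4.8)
The plan is to pin down $f^*|_{\ns(\hsk)}$ by combining the algebraic classification of $O(\ns(\hsk))$ obtained just above with the geometric constraint from Remark \ref{rem: cones1}, namely that $f^*$ preserves the nef cone, together with the explicit location of $\mathcal{A}_{\hsk}$ from Proposition \ref{ample_classes}. First I would recall that, by the discussion preceding the statement, $f^*|_{\ns(\hsk)}$ is one of the two shapes $\left(\begin{smallmatrix} A & B \\ C & A\end{smallmatrix}\right)$ or $\left(\begin{smallmatrix} A & B \\ -C & -A\end{smallmatrix}\right)$, with $(n-1)C = tB$ and $(n-1)A^2 - tB^2 = n-1$; so it suffices to rule out (or heavily constrain) the determinant $+1$ case and then to fix the signs of $A,B$ in the determinant $-1$ case.

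The key step is to exploit that $f$ is biregular, so $f^*(\nef(\hsk)) = \nef(\hsk)$, hence $f^*$ maps the two extremal rays of the nef cone to each other (possibly fixing each). By Proposition \ref{ample_classes}, $\mathcal{A}_{\hsk}$ lies strictly inside the quadrant $\{xh - y\delta \mid x>0, y>0\}$, so both extremal rays of $\nef(\hsk)$ have representatives $xh - y\delta$ with $x \geq 0$, $y \geq 0$ (not both zero); in particular the ray generated by $h$ itself, which is a boundary ray of $\overline{\movb(\hsk)}$ and hence of $\nef(\hsk)$ by Theorem \ref{thm_bm_mvb hsk} and the remark following it, is one of the two. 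An isometry of the form $\left(\begin{smallmatrix} A & B \\ C & A\end{smallmatrix}\right)$ with $A>0$ fixes no ray in the open quadrant unless it is the identity, and if $A<0$ it sends the quadrant off itself; a short case analysis on the signs shows the only determinant $+1$ possibility compatible with preserving $\nef(\hsk)$ is $\phi = \id$. This leaves the determinant $-1$ case; since $M^2 = I_2$ there, $f^*|_{\ns(\hsk)}$ is an involution, and it must swap the two extremal rays of $\nef(\hsk)$ (it cannot fix both and be non-trivial). Applying $M$ to $h$, the image is $Ah - C\delta$, which must be a nonzero nef class, forcing $A > 0$ and $C > 0$, hence $B < 0$ from $(n-1)C = tB$ — wait, that sign relation gives $B$ and $C$ the same sign, so I would instead track it as: $C>0 \iff B>0$; the correct normalization (matching the statement) comes from orienting so that $M$ maps the ray $\IR_{>0}h$ to the \emph{other} boundary ray of $\nef(\hsk)$, which lies in the open quadrant and hence has both coordinates positive when written as $xh-y\delta$, i.e.\ $A>0$ and $-C>0$ is impossible, so one reads off instead $A>0$, $B<0$, $C<0$ after a sign flip of the representative; I would present this carefully by choosing the generator of each ray with $x>0$.

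Finally, to get the displayed description of $\mathcal{A}_{\hsk}$, I would argue that the two rays bounding the nef cone are $\IR_{>0}h$ and $\IR_{>0}(A h - C\delta) = \IR_{>0}\big((n-1)A\,h - tB\,(-\delta)\big)$ (using $(n-1)C = tB$), where here I take the representative with positive $h$-coefficient; the ample cone is the open cone between them, which is exactly $\{xh - y\delta \mid y>0,\ (n-1)Ay < -tBx\}$ once the signs are as claimed. The main obstacle I anticipate is the bookkeeping of signs: making sure that the normalization "$A>0$, $B<0$" is forced rather than merely one of two symmetric choices, and that the strict inequality defining $\mathcal{A}_{\hsk}$ comes out with the correct orientation. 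This is handled by repeatedly using Proposition \ref{ample_classes} to know \emph{which} half-space the ample cone sits in, and by using that $h$ is a genuine boundary ray (from Theorem \ref{thm_bm_mvb hsk}, since $\nef \subseteq \overline{\movb}$ and $h$ spans a wall of the latter) so that its $f^*$-image is the \emph{other} boundary ray, not an interior class.
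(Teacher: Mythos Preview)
Your strategy is sound but takes a genuinely different route from the paper. The paper's proof does \emph{not} use Bayer--Macr\`i or the fact that $h$ spans an extremal ray of $\nef(\hsk)$; it works entirely from Proposition~\ref{ample_classes}. Concretely, the paper applies the candidate isometry $\phi$ to the explicit ample classes $(a,1) = ah - \delta$ for $a \geq n+2$ and reads off sign constraints from the requirement that each $\phi(a,1)$ land in the open positive quadrant. In the determinant $+1$ case this forces $A,B,C > 0$; then the non-ampleness of $\phi(1,0)=(A,C)$ gives $\mathcal{A}_{\hsk} \subseteq \{Ay < \tfrac{tB}{n-1}x\}$, and one checks directly that $\phi(n+2,1)$ violates this inequality, a contradiction unless $\phi = \id$. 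In the determinant $-1$ case the same method gives $A>0$, $B,C<0$; letting $a \to +\infty$ shows the ray through $(A,-C)$ lies in $\overline{\mathcal{A}_{\hsk}}$, while $\phi(1,0)=(A,-C)$ is not ample, pinning down both walls.

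Your approach trades this explicit computation for the structural fact that an isometry preserving a two-dimensional cone permutes its extremal rays, together with the knowledge that $h$ spans one of them. This is cleaner once that input is available, but it imports Theorem~\ref{thm_bm_mvb hsk} and the remark following it, which is heavier than what the paper needs at this point; the paper's argument is self-contained from Proposition~\ref{ample_classes} alone. Both are valid.

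One correction to your sign bookkeeping: in the determinant $-1$ case, $M(1,0)^T = (A,-C)^T$ in the basis $\{h,-\delta\}$, and for this to be a positive multiple of the second extremal ray (which lies in the open positive quadrant) you need $A>0$ and $-C>0$, i.e.\ $C<0$; then $(n-1)C = tB$ gives $B<0$. Your parenthetical ``$-C>0$ is impossible'' has it backwards; $-C>0$ is precisely what you want, and there is no ``sign flip of the representative'' needed.
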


\begin{proof}
The proof generalizes the one of \cite[Proposition 4.3]{bcnws}. The isometry $\phi$ induced by $f$ on the N\'eron-Severi group of $\hsk$ can only be of the two forms we described previously in this section. We look at them separately.
\begin{itemize}
\item $\phi = \begin{pmatrix}
A & B \\
C & A
\end{pmatrix}$ with $(n-1)C = tB$ and $(n-1)A^2 - tB^2 = n-1$.\\ Assume $\phi \neq \pm \id$, i.e.\ $B \neq 0$. Notice that $\phi$ is the restriction to $\ns(\hsk)$ of the effective Hodge isometry $f^* \in O(H^2(\hsk, \IZ))$, therefore it needs to map ample classes to ample classes. Then, by applying Proposition \ref{ample_classes}, $\phi(a,1) = (aA+B,aC+A) \in \mathcal{A}_{\hsk}$ for any $a \geq n+2$. Moreover, we have $\mathcal{A}_{\hsk} \subseteq \left\{ xh - y\delta \mid x >0, y>0 \right\}$, again by Proposition \ref{ample_classes}, so we deduce $A>0$, $C>0$ (therefore also $B>0$). Instead, since $h$ is not an ample class, $\phi(1,0) = (A,C) \notin \mathcal{A}_{\hsk}$, which means that (in the plane $\ns(\hsk) \otimes_\IZ \IR$):
\[ \mathcal{A}_{\hsk} \subseteq \left\{ xh - y\delta \mid y>0, Ay < \frac{tB}{n-1}x \right\}. \]  
Now, the class $\phi(n+2,1) = \left( (n+2)A + B, (n+2)C + A \right)$ needs to be in the ample cone, but it does not satisfy the inequality $Ay < \frac{tB}{n-1}x$ (because of the property $(n-1)A^2 - tB^2 = n-1 >0$), so we get a contradiction: $\phi$ cannot be of this form, unless $\phi = \id$ (we have to exclude $\phi = -\id$, because it does not preserve the ample cone).
\smallskip 

\item $\phi = \begin{pmatrix}
A & B \\
-C & -A
\end{pmatrix}$ with $(n-1)C = tB$ and $(n-1)A^2 - tB^2 = n-1$.\\ We proceed as in the previous case: the classes  $\phi(a,1) = (aA+B,-aC-A)$ are ample for any $a \geq n+2$, so $A>0$ and $C <0$ (thus also $B<0$; notice that we cannot have $C=0$, otherwise $\phi(a,1)$ would not be in the ample cone, for any positive $a$). In particular, all the rays through the classes $\phi(a,1)$, for $a \geq n+2$, are contained in the ample cone $\mathcal{A}_{\hsk}$; passing to the limit $a \ra +\infty$, the ray trough $(A, -C)$ must be in the closure of the cone, so
 \[ \mathcal{A}_{\hsk} \supseteq \mathcal{F} \coloneqq \left\{ xh - y\delta \mid y>0, Ay < -\frac{tB}{n-1}x \right\}. \]
 To conclude, we observe that $\phi(1,0) = (A,-C) \notin \mathcal{A}_{\hsk}$, because $h$ is not ample, so we also have $\mathcal{A}_{\hsk} \subseteq \mathcal{F}$. \qedhere
\end{itemize}
\end{proof}

\section{The automorphism group of $S^{\left[ n\right]}$} \label{section: autom group}

The aim of this section is to classify the possible group structures of $\aut(\hsk)$ and to determine some first numerical conditions for the existence of a non-trivial automorphism.  

For any $K3$ surface $S$ and $n \geq 2$, an element $\sigma \in \aut(S)$ induces an automorphism $\sigma^{[n]}$ of the Hilbert scheme $S^{[n]}$, which maps a zero-dimensional subscheme $Z \subset S$ of length $n$ to its schematic image $\sigma(Z)$. Such an automorphism $\sigma^{[n]}$ is called \emph{natural}.

We recall that, if $S$ is a projective $K3$ surface with $\Pic(S)=\IZ H$, $H^2 = 2t$, the automorphism group $\aut(S)$ is trivial if $t \geq 2$. Instead, if $t=1$ there exists a double covering $S \ra \mathbb{P}^2$ which is ramified over a smooth curve of degree six; in particular, $\aut(S) = \left\{ \id, \iota \right\}$, where $\iota$ is the (non-symplectic) covering involution (see \cite[\S 5]{saint-donat} and \cite[Corollary 15.2.12]{huybrechts}). Proposition \ref{unique_isometry} allows us to provide a first result on the structure of the automorphism group $\aut(\hsk)$.

\begin{prop} \label{aut for t>=2}
Let $S$ be an algebraic $K3$ surface with $\Pic(S)=\IZ H$, $H^2 = 2t$. If $t \geq 2$, the automorphism group $\aut(\hsk)$ is either trivial or isomorphic to $\IZ/2 \IZ$, generated by a non-natural, non-symplectic involution.
\end{prop}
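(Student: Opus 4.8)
The plan is to exploit Proposition \ref{unique_isometry} together with the Torelli theorem for irreducible holomorphic symplectic manifolds, plus the fact that $\aut(S) = \{\id\}$ when $t \geq 2$. First I would observe that any $f \in \aut(\hsk)$ restricts on $\ns(\hsk)$ to either the identity or the specific involution $\phi$ described in Proposition \ref{unique_isometry}. Suppose $f^*|_{\ns(\hsk)} = \id$. Since the transcendental lattice $T(\hsk) \subset H^2(\hsk, \IZ)$ is the orthogonal complement of $\ns(\hsk)$ and carries an irreducible Hodge structure, $f^*|_{T(\hsk)}$ acts as multiplication by a root of unity $\zeta$; because $f^*$ is an integral isometry of a lattice of rank $\geq 2$ and fixes $\ns(\hsk)$, one checks (using that $f^*$ preserves the whole lattice $H^2(\hsk,\IZ)$, which is an overlattice gluing $\ns(\hsk)$ and $T(\hsk)$) that $\zeta = \pm 1$, and $\zeta = -1$ is incompatible with $f^*$ being a Hodge isometry acting trivially on $H^{2,0}$ unless... — here the cleanest route is: if $f^*$ acts trivially on $\ns(\hsk)$ and on the period (i.e.\ on $H^{2,0}$), then $f^*$ is trivial on $T(\hsk)$ as well (an isometry fixing both a sublattice and the period of a weight-two Hodge structure with the transcendental part irreducible must be $\pm\id$ on $T$, and $+\id$ since it fixes the period), hence $f^* = \id$ on $H^2(\hsk,\IZ)$; by the Torelli theorem (Markman's version, \cite{markman}, or as already used implicitly via Remark \ref{rem: cones1}) this forces $f = \id$, since $f$ preserves the nef cone and acts trivially on $H^2$. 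If instead $f^*$ acts on $H^{2,0}$ by $-1$ while being trivial on $\ns$, we would get a non-symplectic involution; but then its invariant lattice in $H^2$ would be exactly $\ns(\hsk)$, of rank $2$ and signature $(1,1)$ — and I would need to rule this out, or rather absorb it into the "non-trivial isometry" case.

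Next I would handle the case $f^*|_{\ns(\hsk)} = \phi$, the order-two isometry of Proposition \ref{unique_isometry}. The key point is that $\phi^2 = \id$, and I want to conclude that $f$ itself has order two. Compose: $g := f^2 \in \aut(\hsk)$ satisfies $g^*|_{\ns(\hsk)} = \phi^2 = \id$, so by the previous paragraph $g = \id$, i.e.\ $f$ is an involution. Then I examine whether $f$ is symplectic: a symplectic automorphism acts trivially on $H^{2,0}$, hence its invariant sublattice of $H^2(\hsk,\IZ)$ has rank $\geq 21$ (it contains $T(\hsk)$ and the holomorphic two-form's span); but the invariant sublattice of $\ns(\hsk)$ under $\phi$ has rank at most $1$ (since $\phi \neq \id$ on the rank-two lattice $\ns(\hsk)$ and $\phi$ has determinant $-1$, its $+1$-eigenspace is one-dimensional), so the invariant sublattice of $H^2$ has rank $\leq 1 + 21 = 22 < 23$ — wait, this needs care; rather, I use that a symplectic involution on a manifold of $\hskn$-type has invariant lattice of rank $23$ minus the rank of the anti-invariant part, and the anti-invariant lattice of a symplectic involution is known to be negative definite of rank $8$ (Mongardi), so the invariant lattice has signature $(3, 12)$ and in particular contains a positive-definite rank-three part; but $\phi$ fixes only a rank-$\leq 1$ sublattice of $\ns(\hsk)$, and the rest of the fixed part lies in $T(\hsk)$ which has signature $(2, 19)$ — the fixed part of $T(\hsk)$ must then contain the $H^{2,0} \oplus H^{0,2}$ plane plus more, which is fine for rank but I should instead argue directly. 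The cleanest: $f$ symplectic $\Rightarrow$ $f^*$ trivial on $H^{2,0}$; combined with $f^*|_{\ns}$ being the non-trivial $\phi$, the invariant lattice $H^2(\hsk,\IZ)^{f^*}$ has rank $\leq 1 + \rk T(\hsk) = 22$, whereas it is known (e.g.\ \cite{mongardi}) that for a symplectic automorphism of a manifold of $\hskn$-type the co-invariant lattice is negative definite; but the co-invariant lattice here would have signature involving the $(1,1)$-part of $\ns$ it moves, which contains a positive-square class — contradiction. So $f$ is non-symplectic.

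Then I would finish by ruling out that $f$ is natural: since $t \geq 2$, $\aut(S) = \{\id\}$, so the only natural automorphism of $\hsk$ is the identity (as $\sigma \mapsto \sigma^{[n]}$ is injective — a natural automorphism determines the surface automorphism); hence any non-trivial $f$ is automatically non-natural. Assembling: $\aut(\hsk)$ is either $\{\id\}$ or contains a non-symplectic, non-natural involution $f$, and moreover any two such involutions $f, f'$ have $(f^{-1}f')^*$ trivial on $\ns(\hsk)$ (both restrict to $\phi$), hence $f = f'$; so $\aut(\hsk) = \{\id, f\} \cong \IZ/2\IZ$.

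The main obstacle I anticipate is the step showing that an automorphism acting trivially on $\ns(\hsk)$ must be the identity (equivalently, pinning down the action on the transcendental lattice and then invoking Torelli correctly) and, relatedly, cleanly excluding a symplectic involution — the rank/signature bookkeeping on $H^2$ and the appeal to the structure of (co-)invariant lattices of symplectic automorphisms (Mongardi's classification) is where I would expect the argument to need the most care, and it is plausible the paper instead proves the "$f^*|_{\ns} = \id \Rightarrow f = \id$" claim by a more elementary route, for instance by noting that such an $f$ fixes an ample class and then that its invariant period forces $f^* = \id$ on all of $H^2$, whence $f = \id$ by the Torelli theorem as stated in the form the paper already relies on in Remark \ref{rem: cones1}.
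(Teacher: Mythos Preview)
Your overall architecture matches the paper's: use Proposition~\ref{unique_isometry} to reduce to two possible actions on $\ns(\hsk)$, show the kernel case forces $f=\id$, deduce $f^2=\id$ in the non-trivial case, rule out symplectic, and observe non-naturality from $\aut(S)=\{\id\}$. The divergence is in how the two delicate steps are handled.

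\textbf{The kernel step.} The paper does not argue this from Hodge theory. It cites \cite[Theorem 1]{bs} (Boissi\`ere--Sarti) for the fact that $\ker\bigl(\aut(\hsk)\to O(\ns(\hsk))\bigr)$ consists exactly of natural automorphisms; since $\aut(S)=\{\id\}$ for $t\geq 2$, this kernel is trivial, and $\Psi$ is injective. Your attempt to prove this directly leaves open precisely the case you flag: a non-symplectic $f$ with $f^*\vert_{\ns}=\id$ and $f^*\vert_{\transc}=-\id$. Nothing in your argument excludes this, and it cannot be excluded by pure lattice/Torelli reasoning --- whether $(\id,-\id)$ glues to an integral isometry of $H^2(\hsk,\IZ)$ depends on the overlattice structure, and even when it does, ruling it out as coming from an automorphism requires the geometric input of \cite{bs}. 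This is a genuine gap; the paper's route via the citation is the intended fix.

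\textbf{The non-symplectic step.} Your final version of the contradiction (``the co-invariant lattice would contain a positive-square class'') is incorrect: the $(-1)$-eigenspace of $\phi$ on $\ns(\hsk)$ is the orthogonal complement of the fixed ample class, hence is rank one and \emph{negative} definite, so no signature contradiction arises there. The paper's argument --- which you also mention but then abandon --- is the clean one: for a symplectic $f$ one has $f^*\vert_{\transc}=\id$, so the co-invariant lattice $(H^2)^{f^*,\perp}$ lies inside $\ns(\hsk)$; but by \cite[Corollary 5.1]{mongardi} this co-invariant lattice has rank $8$, contradicting $\rk\ns(\hsk)=2$.
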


\begin{proof}
The map $\aut(\hsk) \ra O(H^2(\hsk, \IZ))$, sending an automorphism of $\hsk$ to its action on the second cohomology lattice, is injective by \cite[Proposition 10]{beauville_eng}; instead, if we only consider $\Psi: \aut(\hsk) \ra O(\ns(\hsk))$, $f \mapsto f^*\vert_{\ns(\hsk)}$, its kernel is the set of natural automorphisms (generalization of \cite[Lemma 2.4]{bcnws}, using \cite[Theorem 1]{bs}). Under the hypothesis $t \geq 2$, the identity is the only automorphism of $S$ therefore $\aut(\hsk)$ is in one-to-one correspondence with the image of $\Psi$, which -- by Proposition \ref{unique_isometry} -- is either trivial or generated by an isometry of order two.

If $\aut(\hsk)$ contains an involution $f$, then it is non-natural and non-symplectic: in fact, if it were symplectic the co-invariant lattice $\left(H^2(\hsk, \IZ)^{f^*}\right)^\perp \subset H^2(\hsk, \IZ)$ would be contained in $\ns(\hsk)$ and it would be of rank eight (see \cite[Lemma 3.5, Corollary 5.1]{mongardi}), while here $\rk( \ns(\hsk)) = 2$. 
\end{proof}

\begin{rem}
If $t=1$, the map $\Psi$ is no longer injective: $\ker(\Psi) = \left\{\id, \iota^{[n]} \right\}$, where $\iota^{[n]}$ is the natural (non-symplectic) automorphism of order two induced by the covering involution $\iota \in \aut(S)$. Nevertheless, any automorphism $f \in \aut(\hsk) \setminus \ker(\Psi)$ is a non-natural, non-symplectic involution and the quotient $\aut(\hsk) /\ker(\Psi)$ is either trivial or isomorphic to $\IZ / 2 \IZ$, following the proof of Proposition \ref{aut for t>=2}.
\end{rem}

Let now $f \in \aut(\hsk)$ be a non-natural automorphism (that is, if $t \geq 2$, any non-trivial automorphism); then, as we showed in Proposition \ref{unique_isometry}, it induces an isometry of $\ns(\hsk)$ of the form
\begin{equation} \label{forma}
\phi = \begin{pmatrix}
A & B \\
-C & -A
\end{pmatrix}, \quad (n-1)C = tB, \; (n-1)A^2 - tB^2 = n-1 
\end{equation}
\noindent with coefficients $A >0,B<0,C<0$ uniquely determined by the ample cone $\mathcal{A}_{\hsk}$.

One can check that this isometry is the reflection of the N\'eron-Severi lattice in the line spanned by the class of coordinates $(-B,A-1)$. As it was done in \cite{bcnws} for the case $n=2$, we denote by $(b,a)$ the primitive generator of this line: in particular, $(b,a) \coloneqq \frac{1}{d}(-B, A-1)$, with $d = \gcd(-B,A-1)$. By computing explicitly the matrix of the reflection whose fixed line is $\langle (b,a) \rangle$, we find the relations
\begin{equation} \label{A_B_a_b}
 A = \frac{tb^2 + (n-1)a^2}{tb^2 - (n-1)a^2}, \qquad B = -\frac{2ab(n-1)}{tb^2 - (n-1)a^2}.
\end{equation}

Since $f$ is a non-symplectic involution, the invariant lattice $T_f \coloneqq H^2(\hsk, \IZ)^{f^*}$ is contained in $\ns(\hsk)$; therefore, $T_f$ is the lattice of rank one generated by $(b,a)$. Moreover, the transcendental lattice $\transc(\hsk) = \ns(\hsk)^\perp \subset H^2(\hsk, \IZ)$ is contained in $T_f^\perp$; in particular, $f^* \vert_{\transc(\hsk)} = -\id$, because the non-natural involution $f^*$ acts as $-\id$ on $T_f^\perp$ (for more details see for instance \cite[\S 5,6]{bnws_smith}).

\begin{lemma} \label{extending isometry}
Let $\phi$ be the isometry of $\ns(\hsk)$ of the form (\ref{forma}) induced by a non-natural automorphism $f \in \aut(\hsk)$; let $(b,a)$ be the primitive generator of the invariant lattice $T_f \subseteq \ns(\hsk)$. Then:
\begin{enumerate}
 \item[\textit{(i)}] $tb^2 - (n-1)a^2$ divides $b$;
 \item[\textit{(ii)}] there exists an even negative integer $\beta$ such that $B = (n-1)\beta$.
\end{enumerate}
\end{lemma}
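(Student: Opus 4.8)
The plan is to use that $f^{\ast}$ is an \emph{integral} isometry of the whole lattice $L \coloneqq H^2(\hsk,\IZ)$, not merely of $\ns(\hsk)$, and to determine it completely from its action on $T_f$ and on $T_f^{\perp}$. Put $e \coloneqq tb^2 - (n-1)a^2$, so that the primitive generator $v_0 \coloneqq bh - a\delta$ of $T_f$ has $v_0^2 = 2tb^2 - 2(n-1)a^2 = 2e$ (using $(h,h)=2t$, $(\delta,\delta)=-2(n-1)$, $(h,\delta)=0$); note $e \neq 0$, otherwise $\phi$ would be $\pm\id$, against $B \neq 0$.

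First I would identify $f^{\ast}$ with a signed reflection. Since $f$ is a non-natural, non-symplectic involution, $f^{\ast}$ is $+\id$ on $T_f = \langle v_0\rangle$ and $-\id$ on $T_f^{\perp}$, as recalled just before the statement. The invariant lattice $T_f$ is a primitive sublattice of $L$ (it is saturated, being the fixed lattice of an isometry) of rank one, so the $\IQ$-spans of $T_f$ and $T_f^{\perp}$ fill $L\otimes\IQ$, and the two conditions above determine $f^{\ast}$ on $L\otimes\IQ$. The reflection $R_{v_0}(x) = x - \frac{(x,v_0)}{e}v_0$ in the hyperplane $v_0^{\perp}$ is $-\id$ on $\langle v_0\rangle$ and $+\id$ on $v_0^{\perp}$; hence $f^{\ast} = -R_{v_0}$ on $L\otimes\IQ$.

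The heart of the argument is then integrality. Because $f^{\ast}$ preserves $L$, for every $x\in L$ we have $-x + \frac{(x,v_0)}{e}v_0 \in L$, hence $\frac{(x,v_0)}{e}v_0\in L$; as $v_0$ is primitive in $L$ this forces $e \mid (x,v_0)$ for all $x\in L$. Now $\ns(S) = \IZ H$ is primitively embedded in the \emph{unimodular} lattice $H^2(S,\IZ)$, so $h$ is primitive there and there exists $x_0 \in H^2(S,\IZ)\subset L$ with $(x_0,h) = 1$; since $H^2(S,\IZ)$ and $\IZ\delta$ are orthogonal summands of $L$, also $(x_0,\delta) = 0$, whence $(x_0,v_0) = b(x_0,h) - a(x_0,\delta) = b$. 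Therefore $e \mid b$, which is (i). (Pairing with $\delta$ in the same way yields $e \mid 2(n-1)$, though this is not needed.)

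Finally (ii) follows at once: writing $b = e\,b'$ with $b'\in\IZ$ by part (i), the formula $B = -\frac{2ab(n-1)}{e} = -2ab'(n-1)$ exhibits $B = (n-1)\beta$ with $\beta \coloneqq -2ab' \in 2\IZ$, and $\beta < 0$ since $B < 0$ and $n-1>0$. The only genuinely delicate point I anticipate is recognizing that the real content is the integrality of $f^{\ast}$ on all of $H^2(\hsk,\IZ)$ — equivalently the divisibility condition $e\mid(x,v_0)$ — and making the identification $f^{\ast}=-R_{v_0}$ watertight (primitivity and rank one of $T_f$, so that the $\IQ$-span argument is valid); after that the rest is elementary.
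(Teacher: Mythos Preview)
Your argument is correct and rests on the same idea as the paper's: the integrality of $f^{\ast}$ on the full lattice $H^2(\hsk,\IZ)$, tested against a vector that pairs to $b$ with $v_0$ (your $x_0$ is exactly the paper's $e_2$ under the embedding $h\mapsto e_1+te_2$, $-\delta\mapsto g$). The main difference is packaging: the paper computes $f^{\ast}(e_2)$ coordinate-wise from the matrix $\phi$ and the relation $f^{\ast}(e_1-te_2)=-(e_1-te_2)$, obtaining $e\mid b^2$ and $e\mid ab$ and hence $e\mid b$ via $\gcd(a,b)=1$, whereas you first identify $f^{\ast}=-R_{v_0}$ and read off the cleaner statement $e\mid\divi(v_0)$ directly, which immediately gives $e\mid b$ and makes (ii) a one-line substitution into $B=-2ab(n-1)/e$. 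Your formulation is slightly more conceptual and in fact anticipates the description $f^{\ast}=-R_{\nu}$ that the paper only spells out later when studying the invariant polarization.
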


\begin{proof}
The isometry $f^* \in O(H^2(\hsk, \IZ))$ has restrictions $f^*\vert_{\ns(\hsk)} = \phi$ and $f^*\vert_{\transc(\hsk)} = -\id_{\transc(\hsk)}$. The following isometry of lattices holds:
$$H^2(\hsk, \IZ) \cong L \coloneqq U^{\oplus 3} \oplus E_8(-1)^{\oplus 2} \oplus \langle -2(n-1) \rangle,$$
\noindent where $\langle -2(n-1) \rangle$ denotes the rank one lattice generated by an element of square $-2(n-1)$. Following the proof of \cite[Lemma 5.2]{bcnws}, we embed $\ns(\hsk) = \IZ h \oplus \IZ(-\delta)$ inside $L$ mapping $h$ to $e_1+te_2$ (where $\left\{ e_1, e_2 \right\}$ is a basis for the first summand $U$) and $-\delta$ to the generator $g$ of the component $\langle -2(n-1) \rangle$. We take $\left\{ e_1+te_2, e_2, g \right\}$ as a basis for the lattice $U \oplus \langle -2(n-1) \rangle$, which contains $\ns(\hsk)$ as a sublattice of rank two. Notice that the class $w \coloneqq e_1 - te_2$ belongs to the transcendental lattice, therefore $f^*(w) = -w$. By writing $w = (e_1+te_2)-2te_2$ we compute:
\[ 2tf^*(e_2) = (A+1)(e_1+te_2) -2te_2 - Cg \]
\noindent which can also be written, using relations (\ref{A_B_a_b}), as
\[ f^*(e_2) = \frac{b^2}{tb^2 - a^2(n-1)}(e_1+te_2) - e_2 + \frac{ab}{tb^2 - a^2(n-1)}g.\]

Since the coefficients of this expression need to be integers, $tb^2 - (n-1)a^2$ divides $\gcd(b^2, ab) = b$. Also notice that, due to the condition $(n-1)A^2 - tB^2 = n-1$, we have: $d(tb^2 - a^2(n-1)) = 2a(n-1)$. Therefore, since $\frac{ab}{tb^2 - a^2(n-1)}$ is integer, by multiplying both numerator and denominator by $d$ we deduce that $2(n-1)$ divides $B=-db < 0$.
\end{proof}

Using Lemma \ref{extending isometry}, we can determine the group $\aut(\hsk)$ in some cases where we have a simple description of the nef cone of $\hsk$.

\begin{prop} \label{caso speciale}
Let $S$ be an algebraic $K3$ surface with $\Pic(S)=\IZ H$, $H^2 = 2t$, $t \geq 1$. If $n \geq \frac{t+3}{2}$, all automorphisms of $\hsk$ are natural.
\end{prop}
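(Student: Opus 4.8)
My plan is to argue by contradiction, using Lemma~\ref{extending isometry} to pin down the nef cone and then to contradict its shape with one of the Bayer--Macr\`i walls.

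\emph{Set-up.} It suffices to show that, under the hypothesis $n\geq\frac{t+3}{2}$ — equivalently $t\leq 2(n-1)-1$ — the manifold $\hsk$ admits no non-natural automorphism. So suppose $f\in\aut(\hsk)$ is non-natural. As recalled before Lemma~\ref{extending isometry}, $f$ induces on $\ns(\hsk)$ the isometry $\phi$ of the form (\ref{forma}), and by Lemma~\ref{extending isometry}(ii) we may write $B=(n-1)\beta$ with $\beta$ an even negative integer; put $|\beta|:=-\beta\geq 2$. The relations in (\ref{forma}) then give $C=t\beta$ and $A^2-t(n-1)\beta^2=1$. (If $n=2$ the hypothesis forces $t=1$, hence $A^2-\beta^2=1$ and $\beta=0$, a contradiction; so assume $n\geq 3$.)

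\emph{Step 1: the nef cone.} Since $f$ is biregular, $\phi=f^{*}|_{\ns(\hsk)}$ preserves $\nef(\hsk)$ (Remark~\ref{rem: cones1}); and by Theorem~\ref{thm_bm_mvb hsk} and the discussion following it the ray $R_{1}:=\mathbb{R}_{\geq 0}h$ is one of the two extremal rays of $\nef(\hsk)$. A one-line computation with the matrix in (\ref{forma}) gives $\phi(h)=Ah-t|\beta|\delta$, which is not proportional to $h$; hence $\phi$ carries $R_{1}$ to the other extremal ray $R_{2}:=\mathbb{R}_{\geq 0}(Ah-t|\beta|\delta)$, and
\[\nef(\hsk)=\big\langle\, h,\ Ah-t|\beta|\delta\,\big\rangle.\]

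\emph{Step 2: a wall in the interior (the main point).} I would now exhibit a wall of the chamber decomposition of Theorem~\ref{thm_bm_ample} lying strictly inside this cone; as the interior of $\nef(\hsk)$ is the ample cone, which is a single chamber of that decomposition and contains no wall, this is the desired contradiction. Take the algebraic class $a:=(1,-H,t+1)$ if $t\leq n-2$ and $a:=(-1,H,-(t+1))$ if $t\geq n-1$. In both cases $a^{2}=-2$ and $(v,a)=|\,n-t-2\,|$, and it is precisely the bound $t\leq 2(n-1)-1$ that yields $0\leq(v,a)\leq n-1$, so $a$ is admissible in Theorem~\ref{thm_bm_ample} (and gives a flopping wall, by Remark~\ref{rem: flops}, when $(v,a)\geq 1$); moreover $\langle a,v\rangle$ is hyperbolic, its Gram determinant being $-4(n-1)-(n-t-2)^{2}<0$. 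A short computation identifies the wall,
\[\theta(v^{\perp}\cap a^{\perp})=\big\langle\,(n+t)h-2t\delta\,\big\rangle,\qquad \big((n+t)h-2t\delta\big)^{2}=2t\big((n-t)^{2}+4t\big)>0.\]
Since $h$, $(n+t)h-2t\delta$ and $Ah-t|\beta|\delta$ all have positive $h$-coefficient, it remains to check
\[0\;<\;\frac{2t}{n+t}\;<\;\frac{t|\beta|}{A},\]
and the right-hand inequality, after squaring and using $A^{2}=1+t(n-1)\beta^{2}$, becomes $\beta^{2}\big((n-t)^{2}+4t\big)>4$, which holds because $\beta^{2}\geq 4$. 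Thus $(n+t)h-2t\delta$ spans a ray strictly between $R_{1}$ and $R_{2}$, i.e.\ in the interior of $\nef(\hsk)$ — the contradiction. Hence $\hsk$ has no non-natural automorphism when $n\geq\frac{t+3}{2}$.

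\emph{On the difficulty.} Everything but the choice of $a$ is routine bookkeeping with the relations already recorded in Lemma~\ref{extending isometry} and the cone descriptions of Bayer--Macr\`i. The heart of the argument is guessing the wall class $a$ so that its admissibility condition $(v,a)\leq n-1$ matches exactly the hypothesis on $t$, and verifying that the associated wall lands strictly inside the nef cone pinned down in Step~1; the borderline case $t=n-2$, where $(v,a)=0$ and $\theta(v^{\perp}\cap a^{\perp})$ is a wall of $\movb(\hsk)$ rather than a flopping wall, is subsumed in the same statement.
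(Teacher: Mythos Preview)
Your argument is correct, and it reaches the same contradiction through a genuinely different route than the paper. The paper invokes an external result (\cite[Proposition 10.3]{bayer_macri_vecchio}) stating that, under the hypothesis $n\geq\frac{t+3}{2}$, the second extremal ray of $\nef(\hsk)$ is spanned by $(n+t)h-2t\delta$; it then \emph{equates} this with the $\phi$-determined ray $Ah-t|\beta|\delta$, translates the proportionality into the relation $tb^{2}+(n-1)a^{2}=(t+n)ab$ on the primitive invariant class, and uses the divisibility statement of Lemma~\ref{extending isometry}\textit{(i)} to rule out all integer solutions. You instead construct the class $a=\pm(1,-H,t+1)$ directly, observe that the numerical bounds $a^{2}=-2$, $0\le(v,a)=|n-t-2|\le n-1$ of Theorem~\ref{thm_bm_ample} hold precisely when $t\le 2n-3$, and then show by a one-line estimate (using only the evenness of $\beta$ from Lemma~\ref{extending isometry}\textit{(ii)}) that the resulting wall $\langle (n+t)h-2t\delta\rangle$ lies strictly inside the $\phi$-determined ample cone --- an immediate contradiction with that cone being a chamber. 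Both proofs hinge on the same ray $(n+t)h-2t\delta$, but yours is more self-contained (it needs no reference beyond the Bayer--Macr\`i statements already recorded in \S\ref{sec: intro cones}) and replaces the paper's algebraic case analysis by a single strict inequality; the paper's approach, on the other hand, exercises part \textit{(i)} of Lemma~\ref{extending isometry}, which otherwise goes unused. Your handling of the boundary case $(v,a)=0$ is fine: the wall is then one of the movable-cone walls of Theorem~\ref{thm_bm_movb} rather than a flopping wall, but it is still a wall of the decomposition in Theorem~\ref{thm_bm_ample}, so the contradiction stands.
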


\begin{proof}
Let $f \in \aut(\hsk)$ be a non-natural automorphism: its action on $\ns(\hsk)$ is non-trivial, therefore it is an isometry of the form (\ref{forma}), by Proposition \ref{unique_isometry}. In particular, the nef cone $\nef(\hsk)$ is generated over $\IR^+$ by $h$ and $(n-1)Ah + tB\delta$. However, if $n \geq \frac{t+3}{2}$ we also have the description of the nef cone given in \cite[Proposition 10.3]{bayer_macri_vecchio}: it is generated by $h$ and $h-\frac{2t}{t+n}\delta$. Thus, we need the two classes $(n-1)Ah + tB\delta$ and $h-\frac{2t}{t+n}\delta$ to be proportional, i.e.\
\[ \frac{(n-1)A}{t+n} = -\frac{tB}{2t}.\]

Using relations (\ref{A_B_a_b}) this becomes $tb^2 + (n-1)a^2 = (t+n)ab$, therefore
\begin{equation*}
tb^2 - (n-1)a^2 = (t+n)ab - 2(n-1)a^2=a((t+n)b-2(n-1)a).
\end{equation*}

As we proved in Lemma \ref{extending isometry}, $tb^2 - (n-1)a^2$ divides $b$; but, from the last expression for $tb^2 - (n-1)a^2$, this implies $a \mid b$, so necessarily $a=1$, since $\gcd(a,b)=1$ by definition (also, $a$ and $b$ are both positive).

Therefore, $b$ is an integer solution of the equation: 
\begin{equation*}
tb^2 - (t+n)b + n-1 = 0.
\end{equation*}

Assuming $n \geq 2$, a simple computation shows that this equation admits an integer solution only if $n$ is odd and $n+1 = 2t$: in this case, $b=2$. However, if so we have $tb^2 - (n-1)a^2 = 2t+2$, which does not divide $b=2$ for any $t \geq 1$, thus we get a contradiction.
\end{proof}

Notice that the condition $n \geq \frac{t+3}{2}$ is satisfied by all values $n \geq 2$ when $t=1$, therefore the case $t=1$ is now completely resolved. If instead $t \geq 2$, we already know that there are no non-trivial natural automorphisms on $\hsk$.

\begin{cor} \label{cor: caso speciale}
Let $S$ be an algebraic $K3$ surface with $\Pic(S)=\IZ H$, $H^2 = 2t$.
\begin{itemize}
\item If $t=1$, then $\aut(\hsk) = \left\{ \id, \iota^{[n]} \right\}$, where $\iota^{[n]}$  is the natural involution induced by the covering involution $\iota \in \aut(S)$.
\item If $2 \leq t \leq 2n-3$, then $\aut(\hsk) = \left\{ \id \right\}$.
\end{itemize}
\end{cor}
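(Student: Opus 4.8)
The plan is to obtain both statements as immediate consequences of Proposition \ref{caso speciale}, together with the description of $\aut(S)$ recalled above and the classification of Proposition \ref{aut for t>=2}; no genuinely new argument is required.

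The first step is the elementary observation that the hypothesis $n \geq \frac{t+3}{2}$ of Proposition \ref{caso speciale} is equivalent to $t \leq 2n-3$. Hence in \emph{both} regimes considered in the corollary --- namely $t=1$ (where $n \geq 2 = \frac{t+3}{2}$ holds automatically for all $n \geq 2$) and $2 \leq t \leq 2n-3$ --- Proposition \ref{caso speciale} applies, and we may assume from the outset that every $f \in \aut(\hsk)$ is natural, i.e.\ $f = \sigma^{[n]}$ for some $\sigma \in \aut(S)$.

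For $t=1$ we have $\aut(S) = \{\id, \iota\}$, with $\iota$ the covering involution of the double cover $S \ra \IP^2$. Since the assignment $\sigma \mapsto \sigma^{[n]}$ is injective on $\aut(S)$ (an automorphism of $S$ inducing the identity on $\hsk$ fixes every length-$n$ subscheme of $S$, hence every point of $S$, hence equals $\id_S$), we get $\iota^{[n]} \neq \id$, and, all automorphisms of $\hsk$ being natural, we conclude $\aut(\hsk) = \{\id, \iota^{[n]}\}$. For $2 \leq t \leq 2n-3$ we instead use $t \geq 2$, which forces $\aut(S) = \{\id\}$, so the identity is the only natural automorphism of $\hsk$; combined with Proposition \ref{caso speciale} this gives $\aut(\hsk) = \{\id\}$. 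One may alternatively invoke Proposition \ref{aut for t>=2}: for $t \geq 2$ a nontrivial automorphism would have to be a non-natural involution, which Proposition \ref{caso speciale} forbids.

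There is no real obstacle here: the substance lies entirely in the earlier propositions, and what remains is bookkeeping --- verifying the equivalence $n \geq \frac{t+3}{2} \iff t \leq 2n-3$, recalling that $\aut(S)$ is trivial for $t \geq 2$ and of order two for $t=1$, and noting that the natural construction embeds $\aut(S)$ into $\aut(\hsk)$.
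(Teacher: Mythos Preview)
Your proof is correct and follows essentially the same approach as the paper: both deduce the corollary directly from Proposition~\ref{caso speciale} via the equivalence $n \geq \frac{t+3}{2} \iff t \leq 2n-3$, together with the known structure of $\aut(S)$ for $t=1$ and $t \geq 2$. Your write-up is slightly more explicit (e.g.\ spelling out the injectivity of $\sigma \mapsto \sigma^{[n]}$), but there is no substantive difference.
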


From now on, we will assume $t \geq 2$. In Proposition \ref{aut for t>=2} and Lemma \ref{extending isometry} we proved that the isometry $\phi \in O(\ns(\hsk))$ induced by a non-trivial automorphism of $\hsk$ is of the form (\ref{forma}) with $B = (n-1)\beta$ and $\beta < 0$ even. We therefore rewrite the structure of the matrix of $\phi$ and the conditions on its coefficients in the following way:
\begin{equation} \label{forma_new}
\phi = \begin{pmatrix}
A & (n-1)\beta \\
-t\beta & -A
\end{pmatrix}, \quad \text{with } A^2 - t(n-1)\beta^2 = 1, \; A > 0, \; \beta < 0 \textrm{ even} 
\end{equation}

\noindent and with $\mathcal{A}_{\hsk} = \left\{ xh - y\delta \mid y>0, Ay < -t \beta x \right\}$. We will return to numerical conditions for the existence of an automorphism on $\hsk$ in \S \ref{sec: final numerical conditions}.

\section{Invariant polarizations} \label{sec: invariant pol}

In this section we study the properties of the generator of the invariant lattice of a non-natural involution on $\hsk$ and, vice versa, we show that the existence of an ample divisor in $\ns(\hsk)$ with such properties guarantees that $\aut(\hsk)$ is non-trivial.

Let $M$ be an even lattice; the dual $M^\vee \coloneqq \Hom_ {\IZ}(M, \IZ)$ admits the following description:
\[M^\vee = \left\{ u \in M \otimes \IQ : (u,v) \in \IZ \;\; \forall v \in M \right\} \]
\noindent where the brackets $(-,-)$ denote the $\IQ$-linear extension of the bilinear form of $M$. The finite quotient $A_M \coloneqq M^\vee / M$ is called the \emph{discriminant group} of $M$; for instance, the discriminant group of $L = U^{\oplus 3} \oplus E_8(-1)^{\oplus 2} \oplus \langle -2(n-1) \rangle$ is cyclic of order $2(n-1)$. The quadratic form on $M^\vee$ (coming from the one of $M$) descends to a finite quadratic form (with values in $\IQ/2\IZ$) on $A_M$. Any $\varphi \in O(M)$ induces -- in a canonical way -- an isometry $\varphi\vert_{A_M}$ of the discriminant group (see \cite[\S 1.2]{dolgachev} for more details). 

The \emph{divisibility} $\divi(m)$ of an element $m \in M$ is the positive generator of the ideal $\left\{(m,x) \mid x \in M \right\} \subset \IZ$.
For the remaining of the paper, when using the notation $\divi(x)$ we will always refer to the divisibility in the lattice $L \cong H^2(\hsk, \IZ)$, even if the element $x$ is stated to belong to a proper sublattice of $L$, such as $\ns(\hsk)$. 

\begin{prop} \label{prop: generatori T}
Let $S$ be an algebraic $K3$ surface with $\Pic(S)=\IZ H$, $H^2 = 2t$, $t \geq 2$ and $f \in \aut(\hsk)$ an involution. Let $\nu$ be the primitive generator of the rank one invariant lattice $T_f \subset H^2(\hsk, \IZ)$. Then one of the following holds:
\begin{itemize}
\item $f^*$ acts as $-\id$ on the discriminant group of $H^2(\hsk, \IZ)$ and $(\nu, \nu) = 2$;
\item $-1$ is a quadratic residue modulo $n-1$, $f^*$ acts as $\id$ on the discriminant group of $H^2(\hsk, \IZ)$, $(\nu, \nu) = 2(n-1)$ and $\divi(\nu) = n-1$.
\end{itemize}
\end{prop}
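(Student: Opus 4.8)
The plan is to extract information about the invariant lattice $T_f = \IZ\nu$ from the explicit matrix form \eqref{forma_new} of $\phi = f^*\vert_{\ns(\hsk)}$, together with the fact that $f^*$ acts as $-\id$ on $\transc(\hsk)$, and then use Nikulin's theory of discriminant forms to pin down the two cases. First I would recall that, because $T_f$ has rank one and $\transc(\hsk) \subseteq T_f^\perp$ with $f^*\vert_{\transc(\hsk)}=-\id$, the orthogonal complement $S_f \coloneqq T_f^\perp \subset H^2(\hsk,\IZ)$ is exactly the $(-1)$-eigenlattice of $f^*$, so $\ns(\hsk) = T_f \oplus^{\!\!\perp}\!(S_f \cap \ns(\hsk))$ up to finite index and $S_f \cap \ns(\hsk)$ is the rank one negative lattice spanned by the class $(-B, A-1) = (b,a)d$ considered in \S\ref{section: autom group}. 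From \eqref{A_B_a_b} and the relation $d(tb^2-(n-1)a^2)=2a(n-1)$ already derived in the proof of Lemma \ref{extending isometry}, I can compute $(\nu,\nu) = (b,a)\cdot(b,a) = tb^2 - (n-1)a^2$ directly in the N\'eron–Severi lattice; the divisibility constraints of Lemma \ref{extending isometry}(i), namely $tb^2-(n-1)a^2 \mid b$, force $tb^2-(n-1)a^2 \in \{1,2,n-1,2(n-1),\dots\}$ after a short case analysis, and combining this with $(\nu,\nu)>0$ and the sign conventions narrows it considerably.

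Next I would bring in the discriminant form. Write $A_L \cong \IZ/2(n-1)$ for the discriminant group of $L \cong H^2(\hsk,\IZ)$, generated by $\delta/2(n-1)$ with $q(\delta/2(n-1)) \equiv -1/(2(n-1)) \bmod 2\IZ$. Since $f^*$ is an involution acting as $-\id$ on the rank $21$ sublattice $S_f$ and as $+\id$ on $T_f$, the induced isometry $f^*\vert_{A_L}$ is determined by how the glue between $T_f$ and $S_f$ sits inside $A_{T_f}\oplus A_{S_f}$; but it is cleaner to argue via $T_f$ alone. A standard fact (e.g.\ from \cite[\S 1.2]{dolgachev} or Nikulin) is that for a $2$-elementary-type situation like this, $f^*\vert_{A_L}$ is $\pm\id$, and it equals $-\id$ precisely when $A_{T_f} \to A_L$ is an isomorphism, which happens iff $|A_{T_f}| = 2(n-1) / \gcd(\cdots)$ matches — concretely, $\divi(\nu)^2 / (\nu,\nu)$ governs the order of the image. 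So I would split: if $f^*\vert_{A_L} = -\id$, then $T_f$ must carry the full discriminant, forcing $(\nu,\nu)$ and $\divi(\nu)$ to be compatible with $A_{T_f} \cong A_L$; the only primitive vector in $\ns(\hsk)$ whose discriminant group maps isomorphically onto $\IZ/2(n-1)$ with the right $q$-values and which satisfies $tb^2-(n-1)a^2 \mid b$ is one with $(\nu,\nu)=2$ and $\divi(\nu)=1$ — the value $2$ being forced because a square-$2$ class has trivial discriminant contribution on the $T_f$ side and hence $S_f$ alone must realize $A_L$, which is the mirror statement. If instead $f^*\vert_{A_L}=\id$, then $T_f$ contributes trivially, all the discriminant lives on $S_f$, and matching orders together with the divisibility bound forces $(\nu,\nu)=2(n-1)$ and $\divi(\nu)=n-1$; the congruence condition $x^2 \equiv -1 \bmod n-1$ then drops out of the requirement that $q(\nu/ (n-1)) \in 2\IZ$, i.e.\ $2(n-1)/(n-1)^2 \equiv 2/(n-1)$ must combine with the $S_f$-form to land in $\IZ/2\IZ$ compatibly, which is exactly solvability of $-1$ as a quadratic residue mod $n-1$.

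To make the discriminant bookkeeping rigorous rather than heuristic, I would phrase it through the chain $\transc(\hsk) \subseteq S_f$ of finite index and the fact that $\transc(\hsk)$ has the same discriminant group as $\ns(\hsk)$ up to sign (they are orthogonal complements in the unimodular-after-one-generator lattice $L$, so $A_{\ns(\hsk)} \cong A_{\transc(\hsk)}$ as groups with $q \leftrightarrow -q$). Then $f^*\vert_{A_L}$ can be read off from $f^*\vert_{A_{\transc(\hsk)}} = -\id$ versus $f^*\vert_{A_{\ns(\hsk)}}$, and the latter is computed from the explicit matrix $\phi$ in \eqref{forma_new}: one checks on generators of $\ns(\hsk)^\vee/\ns(\hsk)$ whether $\phi$ acts as $+\id$ or $-\id$, which is a direct finite computation using $A^2 - t(n-1)\beta^2 = 1$ and the parity of $\beta$. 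Consistency of $f^*\vert_{A_L}$ computed two ways (from $T_f$/$S_f$ and from $\ns(\hsk)$/$\transc(\hsk)$) is what forces $(\nu,\nu)$ into the dichotomy. The main obstacle I anticipate is precisely this discriminant-form matching: keeping track of which lattice carries the discriminant, the sign of the induced action, and translating $\divi(\nu)$ into $A_L$ correctly — the arithmetic is elementary but the combination of glue vectors, the non-unimodular summand $\langle -2(n-1)\rangle$, and the two possible parities of $A$ makes it easy to slip a sign or a factor of $2$. Everything else (computing $(\nu,\nu)=tb^2-(n-1)a^2$, applying the divisibility bound from Lemma \ref{extending isometry}, and the small Diophantine case distinctions) is routine.
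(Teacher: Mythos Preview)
There is a genuine gap in your proposal, at the step where you assert $f^*\vert_{A_L} = \pm\id$. You write this off as ``a standard fact (e.g.\ from \cite{dolgachev} or Nikulin) \dots for a $2$-elementary-type situation like this'', but the lattice $L$ is \emph{not} $2$-elementary when $n>2$: its discriminant group is cyclic of order $2(n-1)$, and an arbitrary involutive isometry of such a lattice need not act as $\pm\id$ on $A_L$ (for instance, when $n-1=4$ multiplication by $3$ on $\IZ/8\IZ$ is an involution that is neither $+\id$ nor $-\id$). The reason $f^*\vert_{A_L}=\pm\id$ holds here is geometric, not lattice-theoretic: $f^*$ is a monodromy operator because it is induced by an automorphism, and Markman's description of the monodromy group \cite[Lemma~9.2]{markman} is what forces it into $\widetilde{O}^+(L)$. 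Without this input your dichotomy never gets off the ground. Relatedly, the divisibility constraint $tb^2-(n-1)a^2 \mid b$ from Lemma~\ref{extending isometry} does not by itself pin $(\nu,\nu)$ down to the two values $2$ and $2(n-1)$: it only says the half-square divides $b$, which for given $b$ still leaves all divisors of $b$ on the table, so your ``short case analysis'' is not actually short. (Incidentally, $(\nu,\nu)=2\bigl(tb^2-(n-1)a^2\bigr)$, not $tb^2-(n-1)a^2$; you have dropped a factor of $2$.)

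The paper's argument bypasses all of your discriminant bookkeeping via one clean observation: since $T_f$ has rank one and $f^*\vert_{T_f^\perp}=-\id$, the isometry $f^*$ is exactly $-R_\nu$, the negative of the reflection in $\nu$. Once this is recognised, Markman's theorem gives $(-R_\nu)\vert_{A_L}=\pm\id$, and then \cite[Propositions~3.1,~3.2]{ghs_kodaira}, which classify primitive vectors $l$ with $(\pm R_l)\vert_{A_L}=\id$, immediately yield $(\nu,\nu)=2$ in the first case and $(\nu,\nu)=2(n-1)$ with $\divi(\nu)\in\{n-1,2(n-1)\}$ in the second; the quadratic-residue condition and the exclusion of $\divi(\nu)=2(n-1)$ come from \cite[Proposition~3.6]{ghs}. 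Your glue-vector approach through $A_{T_f}\oplus A_{S_f}$ could in principle be salvaged once Markman's theorem is supplied, but it is substantially more laborious and, as you yourself anticipate, prone to sign and factor-of-$2$ slips; the reflection viewpoint together with the cited GHS results is both shorter and more robust.
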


\begin{proof}
The generator $\nu$ of $T_f$ coincides with the ample class $bh - a\delta \in \ns(\hsk)$ defined in Section \ref{section: autom group}. We recall that $f^*\vert_{\ns(\hsk)}$ is the reflection fixing the line $\langle \nu \rangle$, while $f^*$ acts as $-\id$ on $\transc(\hsk)$, therefore $f^*$ can also be regarded as the opposite of the reflection of $H^2(\hsk, \IZ)$ defined by $\nu$, i.e.\
\begin{equation} \label{eqt: reflection}
f^* = -R_{\nu}: H^2(\hsk, \IZ) \rightarrow H^2(\hsk, \IZ), \quad m \mapsto 2\frac{(m,\nu)}{(\nu,\nu)}\nu-m.
\end{equation}
Moreover, $f^*$ is a monodromy operator (it is the isometry of $H^2(\hsk, \IZ)$ induced by an automorphism of $\hsk$; see \cite[Definition 1.1]{markman}). If we identify $H^2(\hsk, \IZ)$ with the lattice $L$ via the isometry presented in the proof of Lemma \ref{extending isometry}, by \cite[Lemma 9.2]{markman} we have 
\[f^* \in \widetilde{O}^+(L) \coloneqq \left\{ \varphi \in O^+(L) \mid \varphi\vert_{A_L} = \pm \id \right\}\]
\noindent where $O^+(L)$ is the subgroup of isometries of $L$ with real spinor norm $+1$ (i.e.\ the isometries which preserve the positive cone).

The vectors $l \in L$ such that $R_l\vert_{A_L} = \id$ or $(-R_l)\vert_{A_L} = \id$ were studied in \cite[Proposition 3.1]{ghs_kodaira}, \cite[Proposition 3.2]{ghs_kodaira} respectively: by applying these results, and observing that $\nu$ has square $(\nu, \nu) = 2(tb^2 - (n-1)a^2) > 0$, we can conclude the following:
\begin{enumerate}
 \item[\textit{(i)}] if $f^*\vert_{A_L} = -\id$, then $(\nu, \nu) = 2$;
 \item[\textit{(ii)}] if $f^*\vert_{A_L} = \id$, then $(\nu, \nu) = 2(n-1)$ and $\divi(\nu)=n-1$ or $\divi(\nu)=2(n-1)$.
\end{enumerate}

We deduce that the square of the generator of the invariant lattice $T_f$ can only be $2$ or $2(n-1)$. The existence of a primitive element $l \in L$ with given square and divisibility depends on whether some arithmetic conditions are satisfied: this is proved in \cite[Proposition 3.6]{ghs}, where an explicit description of the lattice $l^\perp$ is also provided. Using this result we find out that case $(ii)$ is admissible only if $-1$ is a quadratic residue modulo $n-1$ and the divisibility of $\nu$ is $n-1$. This concludes the proof.
\end{proof}

\begin{rem} \label{rem: A cong pm 1}
By Proposition \ref{prop: generatori T}, the square of the class $\nu = bh -a\delta$ can be determined by looking at the action of $f^*$ on $A_L$. A generator for $A_L \cong A_{\langle -2(n-1) \rangle}$ is given by the class, in the quotient $L^\vee/L$, of $-\frac{\delta}{2(n-1)} \in L \otimes \IQ$, because $-\delta$ is the generator of the summand $\langle -2(n-1) \rangle$ in $L$, via the embedding $\ns(\hsk) \hookrightarrow L$ presented in the proof of Lemma \ref{extending isometry}. Thus, since $f^*\vert_{\ns(\hsk)} = \phi$ has the form (\ref{forma_new}), we obtain:
$$ f^*\vert_{A_L} \left( -\frac{\delta}{2(n-1)} \right) = \frac{1}{2(n-1)}\left( (n-1)\beta h -A(-\delta) \right) \equiv -\frac{A}{2(n-1)}(-\delta) \; (\text{mod } L) $$
\noindent where we used the fact that $\beta$ is even, by Lemma \ref{extending isometry}. On the other hand, we know that $f^*$ acts as $\pm \id$ on $A_L$, being a monodromy operator, hence either $A+1$ or $A-1$ is divisible by $2(n-1)$. In particular, if $2(n-1)$ divides $A-1$, then $f^*\vert_{A_L} = - \id$ and $\nu$ has square $2$; instead, if $2(n-1)$ divides $A+1$ then $f^*\vert_{A_L} = + \id$ and $\nu$ has square $2(n-1)$. Notice that $2(n-1)$ can divide both $A-1$ and $A+1$ only for $n=2$: in this case, $-\id = + \id$ on $A_L \cong \IZ/2\IZ$ and $\nu$ has square $2 = 2(n-1)$, as already proved in \cite[Theorem 1.1]{bcnws}.
\end{rem}

By Proposition \ref{prop: generatori T}, if $t \geq 2$ and $\aut(\hsk) \neq \left\{ \id \right\}$ there exists an ample class $\nu \in \ns(\hsk)$ with $(\nu,\nu) = 2$ or $(\nu,\nu) = 2(n-1)$, $\divi(\nu) = n-1$. We now show that the converse holds for any manifold of $\hskn$-type.

\begin{prop} \label{prop: existence of involutions}
Let $X$ be an hyperk\"ahler manifold deformation equivalent to the Hilbert scheme of $n$ points on a $K3$ surface, $n \geq 2$. Then $X$ admits a non-symplectic involution if there exists a primitive ample class $\nu \in \ns(X)$ with either
\begin{itemize}
\item $(\nu,\nu) = 2$, or
\item $(\nu,\nu) = 2(n-1)$ and $\divi(\nu) = n-1$.
\end{itemize}
\end{prop}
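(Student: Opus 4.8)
The strategy is to use the Torelli-type theorem for manifolds of $\hskn$-type (Verbitsky, Markman) to realize the desired involution as $-R_\nu$, the opposite of the reflection in the ample class $\nu$. First I would set up the candidate isometry: define $g \coloneqq -R_\nu \in O(H^2(X, \IZ))$, where $R_\nu(m) = m - 2\frac{(m,\nu)}{(\nu,\nu)}\nu$; note $R_\nu$ is well defined precisely because in the two admissible cases $\divi(\nu)$ divides $(\nu,\nu)$ (it equals $1$ or $n-1$). One checks immediately that $g$ is an involution, $g$ fixes $\nu$ and acts as $-\id$ on $\nu^\perp$, so in particular $g|_{\transc(X)} = -\id$ (since $\transc(X) \subseteq \nu^\perp$, as $\nu$ is algebraic) and $g$ preserves the Hodge structure. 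Because $\nu$ is a positive class, $g$ preserves the positive cone, i.e.\ $g$ has real spinor norm $+1$. The key input from the classification of Proposition \ref{prop: generatori T} (more precisely from \cite[Proposition 3.1, 3.2]{ghs_kodaira}) is that in each of the two cases $g|_{A_L} = \pm\id$: in the square-two case $g|_{A_L} = -\id$, and in the case $(\nu,\nu)=2(n-1)$, $\divi(\nu)=n-1$ one gets $g|_{A_L} = +\id$. Hence $g \in \widetilde{O}^+(L)$, which by \cite[Lemma 9.2]{markman} is exactly the monodromy group $\Mo^2(X)$ of a manifold of $\hskn$-type.

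Next I would invoke the Hodge-theoretic Torelli theorem (Markman, \cite{markman}): a monodromy operator which is a Hodge isometry and which maps a Kähler class to a Kähler class is induced by a (unique) biregular automorphism of $X$. Since $g$ is a monodromy operator and a Hodge isometry by the previous paragraph, it remains to check that $g$ sends a Kähler class to a Kähler class. The natural choice is $\nu$ itself: $g(\nu) = \nu$, and $\nu$ is ample by hypothesis, hence Kähler. Therefore there exists $f \in \aut(X)$ with $f^* = g$. Finally, $f$ is non-symplectic: $f^*$ acts as $-\id$ on $\transc(X)$, hence it does not fix the line $H^{2,0}(X) \subset \transc(X) \otimes \IC$; in particular $f^* \neq \id$, so $f$ is a non-trivial involution, and the involution $f$ is non-symplectic. (If one wants $f$ of order exactly two, note $(f^*)^2 = \id$ and $f^* \neq \id$, and the representation $\aut(X) \to O(H^2(X,\IZ))$ is injective by \cite[Proposition 10]{beauville_eng}.)

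The only point requiring genuine care — and the main obstacle — is verifying $g \in \Mo^2(X)$, i.e.\ that $g|_{A_L} = \pm\id$ in each admissible case. This is where the hypotheses on $(\nu,\nu)$ and $\divi(\nu)$ are used essentially, and it relies on the precise statements of \cite[Propositions 3.1 and 3.2]{ghs_kodaira} characterizing when $\pm R_l$ acts trivially (up to sign) on the discriminant group of $L$: for $(\nu,\nu)=2$ one always has $-R_\nu|_{A_L}=\id$, while for $(\nu,\nu)=2(n-1)$ one needs the divisibility to be exactly $n-1$ (and not $2(n-1)$) for $R_\nu|_{A_L}=\id$ to hold, which is precisely the second hypothesis. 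The remaining steps (well-definedness of $R_\nu$, the Hodge-isometry property, spinor norm, and the application of Torelli) are formal once this lattice-theoretic computation is in place; I would carry them out in the order above, treating the $A_L$-action as the substantive lemma and citing \cite{ghs_kodaira}, \cite{markman}, \cite{beauville_eng} for the rest.
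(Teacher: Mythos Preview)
Your proposal is correct and follows essentially the same argument as the paper: define $-R_\nu$, check it is an integral Hodge isometry in $\widetilde{O}^+(L)$ via \cite{ghs_kodaira} and \cite[Lemma 9.2]{markman}, observe it fixes the ample class $\nu$, and conclude by the Hodge-theoretic Torelli theorem and injectivity of $\aut(X)\to O(H^2(X,\IZ))$. One cosmetic slip: your first and last paragraphs assign opposite signs to $g|_{A_L}$ in each case (the paper, consistent with Proposition~\ref{prop: generatori T}, has $g|_{A_L}=-\id$ when $(\nu,\nu)=2$ and $g|_{A_L}=+\id$ when $(\nu,\nu)=2(n-1)$), but since only $g|_{A_L}=\pm\id$ is needed this does not affect the argument.
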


\begin{proof}
Let $\gamma \coloneqq -R_\nu \in O(H^2(X,\IQ))$ be the opposite of the reflection defined by $\nu$, as in (\ref{eqt: reflection}). If $\nu$ is an element as in the statement, $\gamma$ defines an integral isometry $\gamma \in O(H^2(X, \IZ))$. Moreover, $\gamma$ induces $\pm\id$ on the discriminant group of $H^2(X, \IZ)$ by \cite[Proposition 3.1, Proposition 3.2]{ghs_kodaira} and it belongs to $O^+(H^2(X, \IZ))$ by \cite[\S 9]{markman}, because $\gamma = -R_\nu$ with $(\nu,\nu) > 0$. This implies that $\gamma$ is a monodromy operator by \cite[Lemma 9.2]{markman}. Let $\omega_X$ be the everywhere non-degenerate closed two-form on $X$ which generates $H^{2,0}(X)$; then, after extending $\gamma$ to $H^2(X, \IC)$ by $\IC$-linearity, $\gamma(\omega_X)=-\omega_X$, since $\omega_X$ belongs to $\ns(X)^\perp$, thus $\gamma$ is an isomorphism of integral Hodge structures. Moreover, $\gamma$ preserves the K\"ahler cone of $X$, because it fixes the ample class $\nu$: by the Hodge theoretic global Torelli theorem \cite[Theorem 1.3]{markman}, we can conclude that there exists an automorphism $f \in \aut(X)$ whose action on $H^2(X,\IZ)$ is $\gamma$. In particular, $f$ is a non-symplectic involution, since the map $\aut(X) \ra O(H^2(X, \IZ))$ is injective and $\gamma(\omega_X)=-\omega_X$.
\end{proof}

\begin{rem} \label{remark: invol is non-nat}
If $S$ is a $K3$ surface of Picard number one and $X=\hsk$, the non-symplectic involution $f \in \aut(X)$ constructed in the proof of Proposition \ref{prop: existence of involutions} is non-natural, since its action on $\ns(\hsk)$ is non-trivial. As a consequence, using also Proposition \ref{prop: generatori T}, the existence of a primitive ample class with square two, or with square $2(n-1)$ and divisibility $n-1$, is equivalent to the existence of a non-symplectic, non-natural involution (as formulated in Theorem \ref{thm_intro: divisorial conditions}).
\end{rem}

\section{Numerical conditions} \label{sec: final numerical conditions}

In this last section we apply the divisorial results of \S \ref{sec: invariant pol} to provide a completely numerical characterization for the existence of a non-trivial automorphism on $\hsk$.

\begin{prop} \label{prop: numerical conditions and cones}
Let $S$ be an algebraic $K3$ surface with $\Pic(S)=\IZ H$, $H^2 = 2t$, $t \geq 2$. If $\aut(\hsk) \neq \left\{ \id \right\}$, then $t(n-1)$ is not a square and, if $n \neq 2$, the equation $(n-1)X^2 - tY^2 = 1$ has no integer solutions. The minimal solution $(z,w)$ of Pell's equation $X^2 - t(n-1)Y^2 = 1$ with $X \equiv \pm 1$ (mod $n-1$) has $w$ even and $z \equiv \pm 1$ (mod $2(n-1)$). Moreover, 
\[ \nef(\hsk) = \overline{\movb(\hsk)} = \langle h, zh-tw\delta \rangle.\]
The automorphism group of $\hsk$ is generated by a non-natural, non-symplectic involution whose action on $\ns(\hsk) = \IZ h \oplus \IZ (-\delta)$ is given by the matrix
\begin{equation*} \label{forma_ultimate}
\begin{pmatrix}
z & -(n-1)w \\
tw & -z
\end{pmatrix}.
\end{equation*}
\end{prop}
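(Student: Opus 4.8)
The plan is to combine the divisorial characterization from Section~\ref{sec: invariant pol} (Proposition~\ref{prop: generatori T} together with the converse given by Proposition~\ref{prop: existence of involutions} and Remark~\ref{remark: invol is non-nat}) with the explicit matrix structure \eqref{forma_new} of the induced isometry $\phi \in O(\ns(\hsk))$. Suppose $\aut(\hsk) \neq \{\id\}$; since $t \geq 2$ there are no non-trivial natural automorphisms, so by Proposition~\ref{aut for t>=2} the group is generated by a non-symplectic, non-natural involution $f$, and Proposition~\ref{unique_isometry} (refined to \eqref{forma_new}) says $f^*\vert_{\ns(\hsk)} = \phi = \left(\begin{smallmatrix} A & (n-1)\beta \\ -t\beta & -A \end{smallmatrix}\right)$ with $A^2 - t(n-1)\beta^2 = 1$, $A > 0$, $\beta < 0$ even. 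Writing $\beta = -w'$ for a positive even integer $w'$, the pair $(A, w')$ is a positive solution of Pell's equation $X^2 - t(n-1)Y^2 = 1$; in particular $t(n-1)$ cannot be a square (otherwise only $(\pm 1, 0)$ solve it, contradicting $\beta \neq 0$).

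Next I would pin down which solution of $X^2 - t(n-1)Y^2 = 1$ the pair $(A,w')$ is. Remark~\ref{rem: A cong pm 1} shows $A \equiv \pm 1 \pmod{2(n-1)}$, hence $A \equiv \pm 1 \pmod{n-1}$; so $(A,w')$ is \emph{a} solution with $X \equiv \pm 1 \pmod{n-1}$. To see it is the \emph{minimal} such solution $(z,w)$, I would invoke Theorem~\ref{thm_bm_mvb hsk}: the extremal ray $h$ of $\nef(\hsk)$ is fixed by no non-trivial element, while the other extremal ray of $\mathcal{A}_{\hsk}$ is, by \eqref{forma_new}, the ray through $(n-1)A h + tB\delta = (n-1)Ah - t^2\beta'\delta$... more precisely $\mathcal{A}_{\hsk} = \{xh - y\delta \mid y > 0,\ Ay < -t\beta x\}$, i.e.\ the second nef wall is spanned by $Ah + t\beta\delta = A h - t w' \delta$. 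On the other hand, $f^*$ preserves $\overline{\movb(\hsk)}$ and maps $\nef(\hsk)$ to an adjacent chamber; since $f$ is biregular, Remark~\ref{rem: cones1} forces $f^*(\nef(\hsk)) = \nef(\hsk)$, so the $\phi$-invariant ray $\langle Ah - tw'\delta\rangle$ must be an extremal ray of $\nef(\hsk)$ that is not $\langle h \rangle$, hence it \emph{is} the second nef wall. Comparing with Theorem~\ref{thm_bm_mvb hsk}: case (1) is excluded ($t(n-1)$ not a square); case (2), where the movable cone is $\langle h, (n-1)z'h - tw'\delta\rangle$ with $(z',w')$ minimal solving $(n-1)X^2 - tY^2 = 1$, would give a second wall of the wrong shape ($(n-1)z' h$ versus our $A h$, which would force $A$ divisible by $n-1$, incompatible with $A \equiv \pm 1 \pmod{n-1}$ unless $n = 2$) — so for $n \neq 2$ the equation $(n-1)X^2 - tY^2 = 1$ must have no solutions; and case (3) gives $\overline{\movb(\hsk)} = \langle h, zh - tw\delta\rangle$ with $(z,w)$ the minimal solution of $X^2 - t(n-1)Y^2 = 1$ with $X \equiv \pm 1 \pmod{n-1}$. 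Since $\nef \subseteq \overline{\movb}$ share the wall $\langle h \rangle$ and our second nef wall $\langle Ah - tw'\delta\rangle$ has slope $\leq$ that of $\langle zh - tw\delta\rangle$ by extremality of the latter, while minimality of $(z,w)$ gives the reverse slope comparison among solutions, I conclude $(A, w') = (z,w)$ and, crucially, $\nef(\hsk) = \overline{\movb(\hsk)}$ — there are no flopping walls strictly inside. This also gives $w = w'$ even and $z = A \equiv \pm 1 \pmod{2(n-1)}$ from Remark~\ref{rem: A cong pm 1}, and the claimed matrix $\left(\begin{smallmatrix} z & -(n-1)w \\ tw & -z \end{smallmatrix}\right)$ is exactly \eqref{forma_new} with $A = z$, $\beta = -w$.

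I expect the main obstacle to be the slope/extremality bookkeeping that identifies $(A, w')$ with the minimal solution $(z,w)$ and simultaneously rules out case (2) of Theorem~\ref{thm_bm_mvb hsk} for $n \neq 2$ and forces $\nef = \overline{\movb}$: one must argue carefully that the $\phi$-invariant nef wall cannot lie strictly inside $\movb(\hsk)$ (which would require the existence of a flopping wall there fixed by $\phi$, contradicting that a flop changes the birational model while $f$ is an automorphism), and that among the candidate Pell solutions it is forced to be the minimal one by the combination of $A \equiv \pm 1 \pmod{n-1}$ (Remark~\ref{rem: A cong pm 1}) and Remark~\ref{rem: pell slope}. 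Once the cone is identified, the divisibility/square statements ($w$ even, $z \equiv \pm 1 \pmod{2(n-1)}$) and the matrix form are immediate from \eqref{forma_new} and Remark~\ref{rem: A cong pm 1}, and the non-symplectic non-natural nature of $f$ is already established in Proposition~\ref{aut for t>=2}.
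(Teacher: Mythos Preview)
Your overall strategy matches the paper's: pass through Proposition~\ref{aut for t>=2} and the form~\eqref{forma_new} of $\phi$, invoke Remark~\ref{rem: A cong pm 1} for the congruence on $A$, and compare with Theorem~\ref{thm_bm_mvb hsk} to identify $(A,-\beta)=(z,w)$ and obtain $\nef=\overline{\movb}$. Two points need fixing.

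First, the ray $\langle Ah - tw'\delta\rangle$ is \emph{not} $\phi$-invariant: a direct computation gives $\phi(Ah - tw'\delta)=h$, so $\phi$ \emph{exchanges} the two extremal rays of $\nef(\hsk)$, and likewise (by Remark~\ref{rem: cones1}) the two extremal rays of $\overline{\movb(\hsk)}$. Stated this way, the equality $\nef(\hsk)=\overline{\movb(\hsk)}$ is immediate: both cones share the ray $\langle h\rangle$, so applying $\phi$ they share the other ray $\phi(\langle h\rangle)=\langle Ah - tw'\delta\rangle$ as well. (The paper records exactly this argument in the remark following the proposition.) Your proposed patch in the last paragraph---a flopping wall ``fixed by $\phi$'' contradicting that $f$ is biregular---rests on the same misidentification and does not work as written; once you replace ``fixed'' by ``exchanged with $\langle h\rangle$'', the interior/boundary dichotomy for $\overline{\movb}$ gives the conclusion directly.

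Second, your exclusion of case~(2) of Theorem~\ref{thm_bm_mvb hsk} differs from the paper's. The paper argues that since $\phi$ swaps the two movable walls and is induced by an automorphism, the corresponding divisorial contractions are of the same type in the sense of Theorem~\ref{thm_bm_movb}; as $\langle h\rangle$ is a Hilbert--Chow wall, so is the other, forcing case~(3) (and then Lemma~\ref{lemma: alg Mukai classes} yields $w$ even and $z\equiv\pm1\pmod{2(n-1)}$). Your divisibility argument is also viable once $\nef=\overline{\movb}$ is established, but ``$(n-1)z'$ versus $A$'' is too quick: equality of rays only gives $A\,w''=(n-1)z'\,w'$, and one must then use $\gcd(A,n-1)=1$ together with $(n-1)(z')^2 - t(w'')^2=1$ reduced modulo $n-1$ to derive a contradiction for $n\ge 3$. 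Either route works; the paper's is more conceptual, yours more computational.
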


\begin{proof}
If the automorphism group is non-trivial, by Proposition \ref{aut for t>=2} it is generated by a non-natural, non-symplectic involution whose action on $\ns(\hsk)$ is of the form (\ref{forma_new}). In particular, the pair $(A, \beta)$ is a solution of Pell's equation  $X^2 - t(n-1)Y^2 = 1$ with $\beta \neq 0$: as a consequence, $t(n-1)$ cannot be a square (as we remarked in \S \ref{sec: pell}). Moreover, since the action of the involution on $\ns(\hsk)$ is non-trivial, it needs to exchange the two extremal rays of $\movb(\hsk)$, which therefore need to be of the same type with respect to the classification of walls of Theorem \ref{thm_bm_movb}. The extremal ray generated by $h$ corresponds to a wall $\theta(v^\perp \cap a^\perp)$ with $a \in H^*_{\textrm{alg}}(S, \IZ)$ isotropic such that $(v,a) = 1$: in \cite[Theorem 5.7]{bayer_macri_mmp} it is referred to as a wall of Hilbert--Chow type, since the corresponding divisorial contraction is the Hilbert--Chow morphism $\hsk \ra S^{(n)}$. Thus, the second wall of $\movb(\hsk)$ needs to be of this type too. This happens if and only if $\movb(\hsk)$ is as in case (3) of Theorem \ref{thm_bm_mvb hsk}, where in particular -- by Lemma \ref{lemma: alg Mukai classes} -- we need to ask that the minimal solution $(z,w)$ of $X^2 - t(n-1)Y^2 = 1$ with $X \equiv \pm 1$ (mod $n-1$) is such that $w$ is even and $z \equiv \pm 1$ (mod $2(n-1)$). If so, $\movb(\hsk)$ is the interior of the cone spanned by $h$ and $zh-tw\delta$ (Theorem \ref{thm_bm_mvb hsk}).

We know that $\mathcal{A}_{\hsk} = \left\{ xh - y\delta \mid y>0, Ay < -t \beta x \right\}$, with $\beta < 0$ even and $A > 0$, therefore the two extremal rays of the nef cone are generated by $h$ and $Ah - t(-\beta)\delta$. Moreover, either $A+1$ or $A-1$ is divisible by $2(n-1)$, by Remark \ref{rem: A cong pm 1}, meaning $A \equiv \pm 1$ (mod $(n-1)$). Since $\nef(\hsk) \subseteq \overline{\movb(\hsk)}$, the minimality of the slope $\frac{w}{z}$ implies $\nef(\hsk) = \overline{\movb(\hsk)}$, thus $A = z, \beta = -w$.
\end{proof}

\begin{rem}
If $n=2$, the equality $\nef(\hsk) = \overline{\movb(\hsk)}$ when the automorphism group is non-trivial was already proved in \cite[Theorem 1.1]{bcnws}, using \cite[Lemma 13.3]{bayer_macri_mmp}. In general, for any $n$, the equality also follows from the fact that the non-trivial action on $\ns(\hsk)$ of a biregular involution needs to exchange the two extremal rays of both $\movb(\hsk)$ and $\nef(\hsk)$ (see Remark \ref{rem: cones1}). Since the two cones share one of the extremal rays (the wall spanned by $h$), they share the other one too. 
\end{rem}

In order to convert the divisorial results of \S \ref{sec: invariant pol} into purely numerical conditions, we will need the following lemma. 

\begin{lemma} \label{lemma: 2(n-1)}
Let $S$ be an algebraic $K3$ surface with $\Pic(S)=\IZ H$, $H^2 = 2t$, $t \geq 1$. A primitive element $\nu \in \ns(\hsk)$ has $(\nu, \nu) = 2(n-1)$ and $\divi(\nu) = n-1$ if and only if it is of the form $\nu = (n-1)Yh - X\delta$, where $(X,Y)$ is a solution of Pell's equation $X^2 - t(n-1)Y^2 = -1$.
\end{lemma}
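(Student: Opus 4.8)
The plan is to work concretely in the coordinates $\{h, -\delta\}$ of $\ns(\hsk)$, where the Beauville--Bogomolov form reads $(xh - y\delta, xh - y\delta) = 2tx^2 - 2(n-1)y^2$, and to use the embedding $\ns(\hsk) \hookrightarrow L = U^{\oplus 3} \oplus E_8(-1)^{\oplus 2} \oplus \langle -2(n-1)\rangle$ from the proof of Lemma \ref{extending isometry}, under which $h \mapsto e_1 + te_2$ and $-\delta \mapsto g$, with $g$ the generator of $\langle -2(n-1)\rangle$. Write a primitive class as $\nu = xh - y\delta$ with $\gcd(x,y)=1$. First I would translate the square condition: $(\nu,\nu) = 2(n-1)$ is exactly $tx^2 - (n-1)y^2 = n-1$, i.e.\ $tx^2 = (n-1)(y^2+1)$. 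The key observation is then that $n-1 \mid tx^2$; since we want $\divi(\nu) = n-1$ I expect this to force $n-1 \mid x$ — but one must be a little careful, because $t$ and $n-1$ need not be coprime. This is the step I expect to be the main obstacle: disentangling the divisibility of $x$ from the arithmetic relation, and showing it is governed by $\divi(\nu)$ rather than by accidental common factors of $t$ and $n-1$.

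To handle the divisibility cleanly, I would compute $\divi(\nu)$ directly in $L$. For $m = \alpha(e_1 + te_2) + \beta e_2 + \lambda g \in L$ one has $(\nu, m) = x(\alpha \cdot (- t) \cdot(-1)?...) $ — more precisely, pairing $\nu = x(e_1+te_2) + y g$ against the basis vectors of $L$, the possible values of $(\nu, m)$ as $m$ ranges over $L$ generate the ideal $\gcd\bigl(\{(\nu, e_1+te_2),\ (\nu,e_2),\ (\nu,g)\} \cup \{0\}\bigr) \IZ$. Computing, $(\nu, e_1) = x$, $(\nu, e_2) = tx$ (hence $(\nu, e_1+te_2)$ and $(\nu,e_2)$ contribute $\gcd(x,tx)\IZ = x\IZ$ together with the $e_1$ pairing giving just $x\IZ$), and $(\nu, g) = -2(n-1)y$. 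Thus $\divi(\nu) = \gcd(x,\ 2(n-1)y) = \gcd(x, 2(n-1))$, using $\gcd(x,y)=1$. So $\divi(\nu) = n-1$ forces $n-1 \mid x$, say $x = (n-1)Y$; substituting into $tx^2 = (n-1)(y^2+1)$ gives $t(n-1)Y^2 = y^2 + 1$, i.e.\ (setting $X := y$) the Pell equation $X^2 - t(n-1)Y^2 = -1$, and $\nu = (n-1)Yh - X\delta$ as claimed.

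Conversely, given a solution $(X,Y)$ of $X^2 - t(n-1)Y^2 = -1$, set $\nu = (n-1)Yh - X\delta$. Then $(\nu,\nu) = 2t(n-1)^2Y^2 - 2(n-1)X^2 = 2(n-1)\bigl(t(n-1)Y^2 - X^2\bigr) = 2(n-1)$, as wanted. Primitivity is automatic: $\gcd((n-1)Y, X)$ divides $X$, and any common prime $p$ would satisfy $p \mid X$ and $p \mid (n-1)Y$; from $X^2 - t(n-1)Y^2 = -1$ we get $p \mid 1$ once we note $p \mid (n-1)Y \Rightarrow p \mid t(n-1)Y \cdot Y = t(n-1)Y^2$ — wait, more directly $p \mid X$ gives $p \mid X^2 = t(n-1)Y^2 - 1$, and $p \mid (n-1)Y$ gives $p \mid t(n-1)Y^2$, whence $p \mid 1$, contradiction. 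Finally $\divi(\nu) = \gcd((n-1)Y, 2(n-1)) = (n-1)\gcd(Y,2)$ in general; but $X^2 - t(n-1)Y^2 = -1$ forces $X$ odd, and if $Y$ were even then $X^2 \equiv -1 \pmod 4$, impossible, so $Y$ is odd and $\divi(\nu) = n-1$. This closes both directions; the only genuinely delicate point, as flagged, is the divisibility computation $\divi(\nu) = \gcd(x, 2(n-1))$, which I would present as the heart of the argument.
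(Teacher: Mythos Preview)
Your argument is correct and follows essentially the same route as the paper: both directions are handled by the explicit embedding $\ns(\hsk)\hookrightarrow L$ and a direct computation of the relevant pairings, and the parity argument ruling out even $Y$ via $X^2\equiv -1\pmod 4$ is identical. One cosmetic slip: with $\nu = x(e_1+te_2)+yg$ you actually get $(\nu,e_1)=tx$ and $(\nu,e_2)=x$, not the reverse; this does not affect the conclusion since $\gcd(x,tx)=|x|$, and your formula $\divi(\nu)=\gcd(x,2(n-1))$ stands (the paper, more economically, only uses $(\nu,e_2)=b$ to deduce $(n-1)\mid b$).
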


\begin{proof}
Assume that there exists $\nu = bh - a\delta \in \ns(\hsk)$ as in the statement: since $(\nu, \nu) = -2((n-1)a^2 - tb^2) = 2(n-1)$, we deduce that $(a,b)$ is an integer solution of $(n-1)X^2 - tY^2 = -(n-1)$. The canonical embedding of $\ns(\hsk)$ inside $H^2(\hsk,\IZ) \cong U^{\oplus 3} \oplus E_8(-1)^{\oplus 2} \oplus \langle -2(n-1)\rangle$ (see the proof of Lemma \ref{extending isometry}) maps $\nu$ to $b(e_1 + te_2) + ag$, where $\left\{e_1,e_2 \right\}$ is a basis for one of the summands $U$ and $g$ generates $\langle -2(n-1) \rangle$. Notice that, with respect to the bilinear form on $H^2(\hsk, \IZ)$, we have $(\nu, e_2) = b$, therefore $b$ is a multiple of $\divi(\nu) = n-1$. We conclude that $(a, \frac{b}{n-1})$ is an integer solution of $X^2 - t(n-1)Y^2 = -1$.

Conversely, assume that $X^2 - t(n-1)Y^2 = -1$ admits integer solutions and let $(X,Y)$ be one of them. We set $\nu \coloneqq (n-1)Yh - X\delta \in \ns(\hsk)$: it is a primitive class of square $2(n-1)$. Using the usual embedding $\ns(\hsk) \hookrightarrow H^2(\hsk, \IZ)$, which maps $\nu$ to $(n-1)Y(e_1+te_2) + Xg$, we can easily compute the ideal of pairings
\[ \left\{ (\nu, m) \mid m \in H^2(\hsk, \IZ) \right\} = \left\{ (n-1)(Yp-2Xq) \mid p,q \in \IZ\right\} \subset \IZ.\]
From this description it is clear that $(n-1) \mid \divi(\nu)$; moreover, since $\divi(\nu) \mid (\nu, \nu)$, we conclude that $\divi(\nu) = n-1$ if $Y$ is odd, while $\divi(\nu) = 2(n-1)$ if $Y$ is even. However, there are no solutions $(X,Y)$ to $X^2 - t(n-1)Y^2 = -1$ with $Y$ even, since $-1$ is not a quadratic residue modulo $4$, thus $\divi(\nu) = n-1$.
\end{proof}

We can now state and prove our main result.

\begin{theorem} \label{thm: numerical characterization}
Let $S$ be an algebraic $K3$ surface with $\Pic(S)=\IZ H$, $H^2 = 2t$, $t \geq 2$ and $n \geq 2$. Let $(z,w)$ be the minimal solution of Pell's equation $X^2 - t(n-1)Y^2 = 1$ with $X \equiv \pm 1$ (mod $n-1$). The Hilbert scheme $\hsk$ admits a (non-symplectic, non-natural) involution if and only if 
\begin{enumerate}
\item[\textit{(i)}] $t(n-1)$ is not a square;
\item[\textit{(ii)}] if $n \neq 2$, the equation $(n-1)X^2 - tY^2 = 1$ has no integer solutions;
\item[\textit{(iii)}] for all integers $\rho, \alpha$ as follows:
\begin{enumerate}
\item $\rho = -1$ and $1 \leq \alpha \leq n-1$, or
\item $\rho = 0$ and $3 \leq \alpha \leq n-1$, or
\item $1 \leq \rho < \frac{n-1}{4}$ and $\max\left\{4\rho+1,\ceil[\bigg]{2\sqrt{\rho(n-1)}}  \right\} \leq \alpha \leq n-1$
\end{enumerate}
the minimal solution $(X,Y)$ of Pell's equation 
\[X^2 - 4t(n-1)Y^2 = \alpha^2 - 4\rho(n-1)\]
with $X \equiv \pm \alpha$ (mod $2(n-1)$), if it exists, is such that $\frac{Y}{X} \geq \frac{w}{2z}$;
\item[\textit{(iv)}] there exist integer solutions either for the equation $(n-1)X^2 - tY^2 = -1$ or for the equation $X^2 - t(n-1)Y^2 = -1$.
\end{enumerate}
\end{theorem}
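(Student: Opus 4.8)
The plan is to combine the structural results of Section~\ref{section: autom group} with the divisorial characterization of Section~\ref{sec: invariant pol} and to translate the existence of a suitable ample class $\nu$ into the numerical conditions (i)--(iv) by systematically testing each potential flopping wall of $\movb(\hsk)$. By Proposition~\ref{aut for t>=2} and Proposition~\ref{prop: numerical conditions and cones}, a non-trivial $f\in\aut(\hsk)$ exists precisely when the two extremal rays of $\movb(\hsk)$ are both of Hilbert--Chow type, which forces (i), forces (ii) (otherwise by Theorem~\ref{thm_bm_mvb hsk}(2) the second wall of $\movb(\hsk)$ would not be Hilbert--Chow), and gives $\nef(\hsk)=\overline{\movb(\hsk)}=\langle h, zh-tw\delta\rangle$. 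On the other hand, by Proposition~\ref{prop: existence of involutions} and Remark~\ref{remark: invol is non-nat}, once we know the nef cone has this shape, the existence of the involution is equivalent to the existence of a primitive ample $\nu\in\ns(\hsk)$ with $(\nu,\nu)=2$, or with $(\nu,\nu)=2(n-1)$ and $\divi(\nu)=n-1$. So the proof splits into: (a) show (i) and (ii) are necessary; (b) assuming (i), (ii), characterize when $\nef(\hsk)$ equals $\overline{\movb(\hsk)}=\langle h,zh-tw\delta\rangle$ in terms of the flopping walls, which will produce (iii); (c) characterize when the required ample $\nu$ exists inside this cone, which will produce (iv).

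For step (b), I would use Theorem~\ref{thm_bm_mvb hsk}(3): under (i) and (ii), $\overline{\movb(\hsk)}=\langle h, zh-tw\delta\rangle$ (after checking the parity statement $w$ even, $z\equiv\pm1\pmod{2(n-1)}$, which by Lemma~\ref{lemma: alg Mukai classes} is automatic once the second wall is Hilbert--Chow, i.e.\ follows from (iv)). By Remark~\ref{rem: flops}, $\nef(\hsk)=\overline{\movb(\hsk)}$ if and only if there is no flopping wall strictly inside $\movb(\hsk)$; a flopping wall is $\theta(v^\perp\cap a^\perp)$ for $a$ algebraic with $a^2=2\rho$ and $\alpha:=(v,a)$ satisfying $\rho=-1, 1\le\alpha\le n-1$, or $\rho=0, 3\le\alpha\le n-1$, or $1\le\rho<\tfrac{n-1}{2}$, $2\rho+1\le\alpha\le n-1$. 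Using Lemma~\ref{lemma: alg Mukai classes}, such an $a$ exists iff Pell's equation $X^2-4t(n-1)Y^2=\alpha^2-4\rho(n-1)$ has a solution with $X\equiv\pm\alpha\pmod{2(n-1)}$, and the slope of the corresponding wall in $\ns(\hsk)\otimes\IR$ is controlled by $Y/X$. The wall lies strictly inside $\movb(\hsk)$ (whose far ray has slope $w/(2z)$ when we normalize via $(x,y)\mapsto$ suitable coordinates — note $zh-tw\delta$ has slope $w/z$ but the relevant comparison from the Mukai-lattice side gives the factor $2$) precisely when its slope is strictly smaller; so $\nef=\overline{\movb}$ iff for every admissible $(\rho,\alpha)$ the minimal such solution has $Y/X\ge w/(2z)$. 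By Remark~\ref{rem: pell slope} it suffices to test the minimal solution. The extra sharpenings $4\rho+1$ and $\ceil{2\sqrt{\rho(n-1)}}$ in (iii)(c) come from discarding those $(\rho,\alpha)$ for which the wall cannot meet the positive cone or cannot lie inside $\movb$: one checks $\alpha\ge 4\rho+1$ is needed for the Pell right-hand side $\alpha^2-4\rho(n-1)$ to be compatible with a wall inside the movable cone, and $\alpha\ge 2\sqrt{\rho(n-1)}$ is the hyperbolicity condition $\langle a,v\rangle$ hyperbolic, which after a short computation with $a^2=2\rho$, $(v,a)=\alpha$, $v^2=2(n-1)$ reads $\alpha^2\ge 4\rho(n-1)$; combining with $\alpha\le n-1$ forces $\rho<(n-1)/4$.

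For step (c), having established that under (i)--(iii) we have $\nef(\hsk)=\langle h, zh-tw\delta\rangle$, the matrix $\phi$ of~(\ref{forma_new}) with $(A,\beta)=(z,-w)$ is a genuine isometry preserving this cone, and by the reasoning of Proposition~\ref{prop: generatori T} and Remark~\ref{rem: A cong pm 1} its $-R_\nu$ realization requires an ample $\nu$ of square $2$ (if $2(n-1)\mid z-1$) or of square $2(n-1)$ and divisibility $n-1$ (if $2(n-1)\mid z+1$). An ample primitive $\nu$ of square $2$ in $\ns(\hsk)$ exists iff $(n-1)X^2-tY^2=-1$ has integer solutions (writing $\nu=bh-a\delta$, the condition $(\nu,\nu)=2$ is $(n-1)a^2-tb^2=1$; one checks the resulting ray automatically lies in the interior of $\nef(\hsk)$, using minimality of $(z,w)$), while by Lemma~\ref{lemma: 2(n-1)} an ample primitive $\nu$ of square $2(n-1)$ and divisibility $n-1$ exists iff $X^2-t(n-1)Y^2=-1$ has integer solutions. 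Since $z\equiv\pm1\pmod{2(n-1)}$ and by Remark~\ref{rem: A cong pm 1} exactly one of the two cases occurs (for $n>2$), exactly one of these two Pell equations is the relevant one; but for the "if" direction we only need that \emph{at least one} of them is solvable, since solvability of either produces an ample $\nu$ of the required type and hence, by Proposition~\ref{prop: existence of involutions}, an involution. Conversely if $\aut(\hsk)\ne\{\id\}$, Proposition~\ref{prop: generatori T} gives such a $\nu$, hence one of the two Pell equations is solvable, giving (iv). Finally, one must reconcile the "only if" direction: (iii) is necessary because $f^*$ must preserve $\nef(\hsk)$ (Remark~\ref{rem: cones1}) and a non-trivial isometry of the form~(\ref{forma_new}) can only exchange the two extremal rays of $\nef(\hsk)$, which forces $\nef=\overline{\movb}$, i.e.\ no interior flopping walls, i.e.\ (iii).

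\textbf{Main obstacle.} The delicate part is step (b): correctly enumerating the admissible $(\rho,\alpha)$, handling the divisibility constraint $X\equiv\pm\alpha\pmod{2(n-1)}$ coming from Lemma~\ref{lemma: alg Mukai classes}, and verifying that the slope comparison really is "$Y/X\ge w/(2z)$" with the correct constant (the factor of $2$ between the movable-cone ray $zh-tw\delta$ and the Pell slope on the Mukai side is easy to get wrong). One also has to be careful that Remark~\ref{rem: pell slope} applies — i.e.\ that it genuinely suffices to check the \emph{minimal} solution of each auxiliary Pell equation rather than all fundamental solutions in all classes — which is where Remark~\ref{rem: fundamental solutions} and the sign condition $X\equiv\pm\alpha$ must be invoked to reduce to a single representative per relevant class. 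The remaining steps, (a) and (c), are comparatively routine given Propositions~\ref{prop: numerical conditions and cones},~\ref{prop: existence of involutions} and Lemmas~\ref{lemma: alg Mukai classes},~\ref{lemma: 2(n-1)}.
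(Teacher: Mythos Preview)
Your approach is essentially the paper's, and the outline is correct. Two slips to fix: in step (b) you mis-transcribe Remark~\ref{rem: flops} case (3) --- since $a^2 = 2\rho$, it already reads $1 \le \rho < \tfrac{n-1}{4}$ and $\alpha \ge 4\rho+1$, so the only genuine extra sharpening in (iii)(c) is the positivity $\alpha^2 > 4\rho(n-1)$ of the Pell right-hand side; and in step (c), the parity $w$ even with $z \equiv \pm 1 \pmod{2(n-1)}$, as well as the verification that the class $\nu$ built from (iv) is ample, are obtained in the paper via Lemma~\ref{lemma: pell} (which computes $(z,w)$ explicitly from the minimal solution of the relevant equation in (iv)), not via Lemma~\ref{lemma: alg Mukai classes}.
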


\begin{proof}
By Proposition \ref{prop: numerical conditions and cones}, if there exists a non-trivial automorphism on $\hsk$ then $t(n-1)$ is not a square and -- if $n \geq 3$ -- the equation $(n-1)X^2 - tY^2 = 1$ has no integer solutions; moreover, $\nef(\hsk) = \overline{\movb(\hsk)} = \langle h, zh-tw\delta \rangle$. The equality $\nef(\hsk) = \overline{\movb(\hsk)}$ implies that there are no flopping walls inside $\movb(\hsk)$. In \S \ref{sec: intro cones} we recalled the description of the elements $a \in H^*_{\textrm{alg}}(S, \IZ)$ such that $\theta(v^\perp \cap a^\perp)$ is a flopping wall: using Lemma \ref{lemma: alg Mukai classes}, the existence of a similar algebraic Mukai vector $a$ corresponds to the existence of integer solutions $(X, Y)$ with $X \equiv \pm \alpha$ (mod $2(n-1)$) to one of Pell's equations $X^2 - 4t(n-1)Y^2 = \alpha^2 - 4\rho(n-1)$, where the possible values of $a^2 = 2\rho$ and $(v,a) = \alpha$ were listed in Remark \ref{rem: flops}. In particular, a solution $(X,Y)$ for one of these equations yields the wall generated by $Xh - 2tY\delta$: this wall lies in the movable cone if and only if $X, Y > 0$ and $\frac{Y}{X} < \frac{w}{2z}$. By Remark \ref{rem: pell slope}, we can restrict to consider the equations where the RHS term $\alpha^2 - 4\rho(n-1)$ is positive. In fact, since $w$ is even by Proposition \ref{prop: numerical conditions and cones}, the pair $(z, \frac{w}{2})$ is the minimal solution of $X^2 - 4t(n-1)Y^2 = 1$, therefore
\begin{equation} \label{eq: slope w/2z}
\frac{w}{2z} = \sqrt{\frac{1}{4t(n-1)} - \frac{1}{4t(n-1)z^2}}
\end{equation}
\noindent which is strictly smaller than $\frac{Y}{X} = \sqrt{\frac{1}{4t(n-1)} - \frac{m}{4t(n-1)X^2}}$ for all solutions $(X,Y)$ of Pell's equation $X^2 - 4t(n-1)Y^2 = m$, if $m \leq 0$. 
We also notice that, for each equation with $\alpha^2 - 4\rho(n-1) > 0$, it is sufficient to check whether the wall defined by the minimal solution (with $X \equiv \pm \alpha$ modulo $2(n-1)$) lies inside the movable cone, since the other walls corresponding to positive solutions of the same equation will all have greater slopes (Remark \ref{rem: pell slope}).

Finally, if $\hsk$ admits an involution then, by Proposition \ref{prop: generatori T}, there exists a primitive element $\nu  \in \ns(\hsk)$ with either $(\nu, \nu) = 2$ or $(\nu, \nu) = 2(n-1)$ and $\divi(\nu) = n-1$. If we write $\nu = bh - a\delta$, we have $(\nu, \nu) = -2((n-1)a^2 - tb^2)$.
\begin{itemize}
\item If $(\nu, \nu) = 2$, then $(a,b)$ is an integer solution of $(n-1)X^2 - tY^2 = -1$.
\item If $(\nu, \nu) = 2(n-1)$ and $\divi(\nu) = n-1$, then by Lemma \ref{lemma: 2(n-1)} $(a,\frac{b}{n-1})$ is an integer solution of $X^2 - t(n-1)Y^2 = -1$.
\end{itemize}

We now want to show that the numerical conditions in the statement are sufficient to prove the existence of a non-trivial automorphism on $\hsk$. By Theorem \ref{thm_bm_mvb hsk}, from hypotheses $(i)$ and $(ii)$ we deduce that the closure of the movable cone of $\hsk$ is $\overline{\movb(\hsk)} = \langle h, zh-tw\delta \rangle$, with $(z,w)$ as in the statement. Moreover, as we explained in the first part of the proof, hypothesis $(iii)$ guarantees that all classes in the movable cone are ample, i.e.\
\begin{equation}\label{eqt: ample cone} \tag{$\star$}
\mathcal{A}_{\hsk} = \movb(\hsk) = \left\{ xh - y\delta \mid y > 0, zy < twx \right\}.
\end{equation}
\begin{itemize}
\item If the equation $(n-1)X^2 - tY^2 = -1$ admits a solution, let $(a,b)$ be the positive solution with minimal $X$; then, by Lemma \ref{lemma: pell} the minimal solution of Pell's equation $X^2 - t(n-1)Y^2 = 1$ is $(Z, W) = \left(2(n-1)a^2 + 1, 2ab \right)$. Notice, in particular, that $Z \equiv 1$ (mod $n-1$), therefore $(z,w) = (Z,W)$. Moreover, $w = 2ab$ is even and $z \equiv 1$ (mod $2(n-1)$): this implies, as explained in the proof of Proposition \ref{prop: numerical conditions and cones}, that both the extremal rays of the movable cone correspond to divisorial contractions of Hilbert--Chow type. We now set $\nu \coloneqq bh - a\delta \in \ns(\hsk)$, which is a primitive class of square $2$. In particular $\nu$ is ample, using (\ref{eqt: ample cone}), because $a > 0$ and
\[za - twb = a(2(n-1)a^2 + 1 - 2tb^2) = -a < 0.\]
\item If instead there are integer solutions for $X^2 - t(n-1)Y^2 = -1$, let $(a,b)$ be again the minimal one. By Lemma \ref{lemma: pell} the minimal solution of Pell's equation $X^2 - t(n-1)Y^2 = 1$ is $(Z, W) = (2a^2 + 1, 2ab)$. Here $Z \equiv -1$ (mod $n-1$), since $a^2 = t(n-1)b^2 -1$, therefore $(z,w) = (Z, W)$ with $w$ even and $z \equiv -1$ (mod $2(n-1)$). Let $\nu \coloneqq (n-1)bh - a\delta \in \ns(\hsk)$: by Lemma \ref{lemma: 2(n-1)} it is a primitive class of square $2(n-1)$ and divisibility $n-1$. Moreover $\nu$ is ample, using the description (\ref{eqt: ample cone}), because $a > 0$ and
\[za - tw(n-1)b = a(2a^2 + 1 - 2t(n-1)b^2) = -a <0. \]
\end{itemize}

Therefore, if one of the two equations in hypothesis $(iv)$ admits integer solutions, we can construct an ample class $\nu$ with either $(\nu, \nu) = 2$ or $(\nu, \nu) = 2(n-1)$ and $\divi(\nu) = n-1$. By Proposition \ref{prop: existence of involutions} and Remark \ref{remark: invol is non-nat} the Hilbert scheme $\hsk$ admits a non-symplectic, non-natural involution, which acts on $H^2(\hsk, \IZ)$ as $-R_\nu$.
\end{proof}

\begin{rem}
In the proof of Theorem \ref{thm: numerical characterization} we showed that, if condition $(iv)$ holds, then the solution $(z,w)$ of $X^2 - t(n-1)Y^2 = 1$ appearing in the statement is the minimal solution of the equation and $z \equiv \pm 1$ (mod $2(n-1)$). Using formula (\ref{eq: pell solutions}) we deduce that all solutions of Pell's equation $X^2 - 4t(n-1)Y^2 = 1$ have $X \equiv \pm 1$ (mod $2(n-1)$), since $(z, \frac{w}{2})$ is the minimal solution. As a consequence, applying again (\ref{eq: pell solutions}), the congruence class modulo $2(n-1)$ of $X$ is constant (up to sign) for all solutions $(X,Y)$ of $X^2 - 4t(n-1)Y^2 = \alpha^2 - 4\rho(n-1)$ in the same equivalence class (see \S \ref{sec: pell}). Therefore, if there exist solutions $(X,Y)$ of $X^2 - 4t(n-1)Y^2 = \alpha^2 - 4\rho(n-1)$ with $X \equiv \pm \alpha$ (mod $2(n-1)$), there is also a fundamental solution with this property. Moreover, by Remark \ref{rem: fundamental solutions}, the minimal solution $(X,Y)$ of this type has slope $\frac{Y}{X} \leq \frac{w}{2z}$, with $\frac{Y}{X} = \frac{w}{2z}$ if and only if the fundamental solution in the class of $(X,Y)$ is of the form $(h,0)$. In order to verify if hypothesis $(iii)$ of Theorem \ref{thm: numerical characterization} holds, it is therefore sufficient to check that $4t(n-1)Y^2 + \alpha^2 - 4\rho(n-1)$ is not the square of an integer $X \equiv \pm \alpha$ (mod $2(n-1)$) for all integers $Y$ such that $1 \leq Y \leq \sqrt{\frac{(z-1)(\alpha^2 - 4\rho(n-1))}{8t(n-1)}}$ and $\rho, \alpha$ as in the statement of the theorem.
\end{rem}

\begin{rem}
For $n=2$, Theorem \ref{thm: numerical characterization} coincides with \cite[Theorem 1.1]{bcnws}. In fact, the only equation that needs to be considered in condition $(iii)$ is $X^2 - 4tY^2 = 5$, corresponding to $\rho = -1$, $\alpha = 1$. If there exist integer solutions $(X,Y)$ for this equation, they all have $X$ odd, i.e.\ $X \equiv \pm \alpha$ (mod $2(n-1)$), and the minimal solution satisfies $\frac{Y}{X} < \frac{w}{2z}$ (Remark \ref{rem: fundamental solutions}). Condition $(iii)$ of Theorem \ref{thm: numerical characterization} is therefore equivalent to asking that Pell's equation $P_{4t}(5) : X^2 - 4tY^2 = 5$ has no solutions, as requested in \cite[Theorem 1.1]{bcnws}.
\end{rem}

As an application of Theorem \ref{thm: numerical characterization}, it is possible to prove that for any $n \geq 2$ there exist infinite values of $t$ such that $\hsk$ admits a non-symplectic involution $f$ with $T_f \cong \langle 2 \rangle$. In order to do so, we consider a specific sequence of integers $\left\{ t_{n,k} \right\}_{k \in \mathbb{N}}$ and we show that all these $t_{n,k}$'s are admissible if $k$ is sufficiently large.

\begin{prop} \label{prop: infinite cases}
Let $S$ be an algebraic $K3$ surface with $\Pic(S)=\IZ H$, $H^2 = 2t$ and assume $t = (n-1)k^2 + 1$ for $k, n$ positive integers, $n \geq 2$. If $k \geq \frac{n+3}{2}$, there exists a non-symplectic involution $f \in \aut(\hsk)$ with $T_f \cong \langle 2 \rangle$.
\end{prop}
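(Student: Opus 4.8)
The plan is to verify, for $t = t_{n,k} := (n-1)k^2 + 1$ with $k \geq \frac{n+3}{2}$, the four numerical conditions of Theorem \ref{thm: numerical characterization}, and to observe along the way that the involution produced has invariant lattice generated by a class of square two. The crucial structural point is that with this choice of $t$, the pair $(a,b) = (k,1)$ is a solution of $(n-1)X^2 - tY^2 = -1$: indeed $(n-1)k^2 - t = (n-1)k^2 - (n-1)k^2 - 1 = -1$. It is clearly the positive solution with minimal $X$. So condition $(iv)$ holds automatically, via the first of the two equations; and by the proof of Theorem \ref{thm: numerical characterization}, this is precisely the case where the resulting ample class $\nu = bh - a\delta = h - k\delta$ has square $2$, so $T_f \cong \langle 2 \rangle$. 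Moreover Lemma \ref{lemma: pell} then tells us the minimal solution of $X^2 - t(n-1)Y^2 = 1$ is $(z,w) = (2(n-1)k^2 + 1,\, 2k)$, which we will need for condition $(iii)$.

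Next I would dispatch conditions $(i)$ and $(ii)$. For $(i)$: $t(n-1) = (n-1)^2 k^2 + (n-1)$; since $(n-1)^2 k^2 = ((n-1)k)^2$ and the next square up is $((n-1)k+1)^2 = (n-1)^2k^2 + 2(n-1)k + 1 > (n-1)^2k^2 + (n-1)$ whenever $2k + \frac{1}{n-1} > 1$ (always true for $k \geq 1$, $n \geq 2$), $t(n-1)$ is strictly between two consecutive squares, hence not a square. For $(ii)$ (only needed when $n \neq 2$): suppose $(n-1)X^2 - tY^2 = 1$ had a solution. Combined with the solution $(k,1)$ of $(n-1)X^2 - tY^2 = -1$, a standard composition-of-forms / norm argument in $\IZ[\sqrt{t(n-1)}]$ shows $t(n-1)$ would be expressible in a way forcing a contradiction — more directly, multiplying the two equations one can produce a solution of $X^2 - t(n-1)Y^2 = -1$, and one should then argue (as in the circle of ideas around Lemma \ref{lemma: pell} and \cite{debarre}) that these two solvability conditions are incompatible for $n > 2$. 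I would look for the cleanest available phrasing; if necessary one argues directly that a solution of $(n-1)X^2 - tY^2 = 1$ with $t = (n-1)k^2+1$ forces $(n-1)X^2 - 1 = ((n-1)k^2+1)Y^2$, i.e. $(n-1)(X^2 - k^2 Y^2) = 1 + Y^2$, so $(n-1) \mid 1 + Y^2$; reducing modulo small primes and using $k \geq \frac{n+3}{2}$ to bound the size of a minimal solution should close this off, though this is the step where I expect to spend the most care.

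The real work is condition $(iii)$: for every admissible pair $(\rho,\alpha)$ I must check that the minimal solution $(X,Y)$ of $X^2 - 4t(n-1)Y^2 = \alpha^2 - 4\rho(n-1)$ with $X \equiv \pm\alpha \pmod{2(n-1)}$, if it exists, has $\frac{Y}{X} \geq \frac{w}{2z}$. By the remark following Theorem \ref{thm: numerical characterization}, it is enough to show $4t(n-1)Y^2 + \alpha^2 - 4\rho(n-1)$ is never a perfect square $X^2$ with $X \equiv \pm\alpha \pmod{2(n-1)}$ for $1 \leq Y \leq \sqrt{\frac{(z-1)(\alpha^2 - 4\rho(n-1))}{8t(n-1)}}$. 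With $z - 1 = 2(n-1)k^2$ and $t(n-1) = (n-1)^2k^2 + (n-1)$, this bound becomes $Y^2 \leq \frac{2(n-1)k^2(\alpha^2 - 4\rho(n-1))}{8((n-1)^2k^2 + (n-1))} = \frac{k^2(\alpha^2 - 4\rho(n-1))}{4((n-1)k^2 + 1)}$, which for the admissible ranges $\alpha \leq n-1$ and $\rho < \frac{n-1}{4}$ gives $\alpha^2 - 4\rho(n-1) \leq (n-1)^2$, hence $Y^2 < \frac{(n-1)^2}{4}$, i.e. $Y \leq \frac{n-2}{2}$ roughly — so only very few, small values of $Y$ need to be tested, and the hypothesis $k \geq \frac{n+3}{2}$ is exactly what makes this window small. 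For each such small $Y$ I would estimate $4t(n-1)Y^2 + \alpha^2 - 4\rho(n-1)$ and show it falls strictly between two consecutive squares (writing $X_0 = \lfloor 2Y\sqrt{t(n-1)}\rfloor$ and comparing), using $t(n-1) = (n-1)^2 k^2 + (n-1)$ to make the square root nearly rational. I expect this to reduce, after the estimates, to a finite and mild case analysis; the main obstacle is organizing the interplay between the three sub-ranges of $(\rho,\alpha)$ and the divisibility constraint $X \equiv \pm\alpha \pmod{2(n-1)}$ cleanly enough that no case is missed, and in making sure the inequality $k \geq \frac{n+3}{2}$ is genuinely sufficient (and not merely necessary for the size bound) in the borderline cases $\rho = 1$, $\alpha$ near $2\sqrt{n-1}$. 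Once $(iii)$ is verified, Theorem \ref{thm: numerical characterization} gives $\aut(\hsk) \neq \{\id\}$, and the discussion above identifies the generator as a non-symplectic involution with $T_f \cong \langle 2\rangle$, completing the proof.
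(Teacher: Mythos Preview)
Your overall strategy --- verify the four conditions of Theorem \ref{thm: numerical characterization} --- matches the paper, and your treatment of $(i)$ and $(iv)$ is fine. However, your arguments for $(ii)$ and $(iii)$ have genuine gaps. For $(ii)$, the composition-of-forms sketch does not work as stated: the forms $(n-1)X^2 - tY^2$ are not norm forms of a quadratic order, so ``multiplying the two equations'' does not yield a solution of $X^2 - t(n-1)Y^2 = -1$; and the congruence $(n-1) \mid 1+Y^2$ only says $-1$ is a quadratic residue mod $n-1$, which is not a contradiction. For $(iii)$, your bound on $Y$ is essentially independent of $k$ (you get $Y^2 < \frac{\alpha^2 - 4\rho(n-1)}{4(n-1)} \leq \frac{n+3}{4}$), so the hypothesis $k \geq \frac{n+3}{2}$ is not doing the work you claim; and the promised ``finite and mild case analysis'' showing each value lands between consecutive squares is never carried out and would be delicate.

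The paper handles both $(ii)$ and $(iii)$ by a single clean device you are missing: the continued fraction expansion of $\sqrt{t(n-1)}$ (resp.\ $\sqrt{4t(n-1)}$) is periodic of period two, namely $[k(n-1);\overline{2k,2k(n-1)}]$ (resp.\ $[2k(n-1);\overline{k,4k(n-1)}]$), and the quadratic form $X^2 - t(n-1)Y^2$ evaluated on successive convergents alternates between $-(n-1)$ and $1$ (resp.\ $-4(n-1)$ and $1$). The role of $k \geq \frac{n+3}{2}$ is precisely to ensure $4t(n-1) > (\alpha^2 - 4\rho(n-1))^2$ for every admissible $(\rho,\alpha)$, so that the classical theory forces all solutions of the relevant Pell equations to come from convergents. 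This immediately rules out solutions of $(n-1)X^2 - tY^2 = 1$ (since $X^2 - t(n-1)Y^2 = n-1$ can only have solutions arising from the trivial fundamental solution $(h,0)$ when $n-1=h^2$, and these have $X \not\equiv 0 \pmod{n-1}$), and shows that any solvable equation in $(iii)$ has minimal solution with slope exactly $\frac{w}{2z}$.
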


\begin{proof}
We need to verify that the four conditions of Theorem \ref{thm: numerical characterization} are satisfied. If $t = (n-1)k^2 + 1$, for $k \in \mathbb{N}$, it is easy to check that $t(n-1)$ is not a square and that $(X,Y) = (k,1)$ is a solution of Pell's equation $(n-1)X^2 - tY^2 = -1$. 

Moreover, if $n \geq 3$ the equation $(n-1)X^2 - tY^2 = 1$ has no integer solutions. In fact, if the equation was solvable we would be able to find solutions of Pell's equation $X^2 - t(n-1)Y^2 = n-1$ with $X \equiv 0$ (mod $n-1$). Since $t(n-1) > (n-1)^2$, the solutions of this new equation can be determined by looking at the convergents of the continued fraction expansion of $\sqrt{t(n-1)}$ (see for instance \cite[Chapter XXXIII, \S 16]{chrystal}), which is
\[\sqrt{t(n-1)} = \left[ k(n-1); \overline{2k, 2k(n-1)} \right].\]

The evaluation of $X^2 - t(n-1)Y^2$ on the convergents $(X,Y)$ of $\sqrt{t(n-1)}$ is periodic with the same period of the continued fraction, i.e.\ two, and it has alternately values $-n+1$ and $1$. As a consequence, $X^2 - t(n-1)Y^2 = n-1$ has solutions only if $n-1 = h^2$, with $h \in \mathbb{N}$, and in this case the only fundamental solution is $(h,0)$. By using the convergents of $\sqrt{t(n-1)}$ we determine the minimal solution of $X^2 - t(n-1)Y^2 = 1$, which is $(z,w) = (2k^2(n-1)+1, 2k)$. Since $z \equiv 1$ (mod $n-1$), we conclude (using formula (\ref{eq: pell solutions})) that there are no solutions of  $X^2 - t(n-1)Y^2 = n-1$ with $X \equiv 0$ (mod $n-1$). 

It remains to check whether condition $(iii)$ of Theorem \ref{thm: numerical characterization} holds. Assuming $k \geq \frac{n+3}{2}$, we have $4t(n-1) > (\alpha^2 - 4\rho(n-1))^2$ for all $\alpha, \rho$ in the statement of the theorem (notice that $\alpha^2 - 4\rho(n-1)$ can be at most $(n-1)^2 + 4(n-1)$). As before, from this condition we deduce that the solutions of $X^2 - 4t(n-1)Y^2 = \alpha^2 - 4\rho(n-1)$ are encoded in the convergents of the continued fraction of
\[\sqrt{4t(n-1)} = \left[ 2k(n-1); \overline{k, 4k(n-1)} \right].\]

The quadratic form $X^2 - 4t(n-1)Y^2$ takes values $-4(n-1)$ and $1$, respectively, on the first two convergents. As a consequence, if Pell's equation $X^2 - 4t(n-1)Y^2 = \alpha^2 - 4\rho(n-1)$ is solvable, then $\alpha^2 - 4\rho(n-1) = h^2$ for some $h \in \mathbb{N}$ and the minimal solution is $(X,Y) = (hz, h\frac{w}{2})$, with $(z,\frac{w}{2})$ the minimal solution of Pell's equation $X^2 - 4t(n-1)Y^2 = 1$. Since $\frac{Y}{X} = \frac{w}{2z}$,  condition $(iii)$ is satisfied.
\end{proof}

To conclude, we provide a list of the minimal values of $t$ such that there exists a non-natural automorphism $f \in \aut(\hsk)$, for $2 \leq n \leq 12$ (more details on the case $n=2$ can be found in \cite{bcnws}). By Corollary \ref{cor: caso speciale} we have $t \geq 2n-2$. In particular, if $-1$ is a quadratic residue modulo $n-1$ we know by Proposition \ref{prop: generatori T} that the generator of the invariant lattice $T_f$ can either have square $2$ or $2(n-1)$: for these values of $n$ we determine the minimal $t$ in each of the two cases. 

\begin{center}
\begin{tabular}{| >{\centering\arraybackslash}m{0.45in} | >{\centering\arraybackslash}m{1in} | >{\centering\arraybackslash}m{1in} |}
\hline
$n$ & \vspace{0.03in}  \shortstack{$t$ minimum s.t. \\ $T_f \cong \langle 2 \rangle$} & \vspace{0.03in} \shortstack{$t$ minimum s.t. \\ $T_f \cong \langle 2(n-1) \rangle$}\\
\hline
$2$ & \multicolumn{2}{c|}{$2$} \\\hline
$3$ & $19$ & $13$  \\\hline
$4$ & $19$ & /  \\\hline
$5$ & $37$ & /  \\\hline
$6$ & $46$ & $34$  \\\hline
$7$ & $55$ & /  \\\hline
$8$ & $64$ & /  \\\hline
$9$ & $73$ & /  \\\hline
$10$ & $82$ & / \\\hline
$11$ & $91$ & $73$ \\\hline
$12$ & $100$ & / \\\hline
\end{tabular}
\end{center}

For $n=3$, in \cite[Example 14]{hassett_tschinkel} the authors proved the existence of a non-natural automorphism $f \in \aut(S^{[3]})$ with $T_f \cong \langle 2 \rangle$ when $S$ is a $K3$ surface with $\Pic(S) = \IZ H$, $H ^ 2 = 114$ (i.e.\ $t=57$). However, as shown in the table above, this is not the minimal $t$ for which a similar automorphism exists on $S^{[3]}$.

We notice an interesting pattern in the second column of the table: except for $n=2$ and $n=4$, the minimum value of $t$ such that there exists an automorphism $f \in \aut(\hsk)$ with invariant lattice $T_f \cong \langle 2 \rangle$ is always of the form $t = 9n-8$. We point out that this is one of the values of $t$ considered in Proposition \ref{prop: infinite cases}, corresponding to $k = 3$. This suggests that the lower bound on $k$ provided in Proposition \ref{prop: infinite cases} for the existence of a non-natural involution may be improved.

\bibliographystyle{amsplain}
\bibliography{AutomorphismGroupK3n}

\providecommand{\bysame}{\leavevmode\hbox to3em{\hrulefill}\thinspace}
\providecommand{\MR}{\relax\ifhmode\unskip\space\fi MR }
% \MRhref is called by the amsart/book/proc definition of \MR.
\providecommand{\MRhref}[2]{%
  \href{http://www.ams.org/mathscinet-getitem?mr=#1}{#2}
}
\providecommand{\href}[2]{#2}
\begin{thebibliography}{10}

\bibitem{bayer_macri_mmp}
A.~Bayer and E.~Macr\`\i, \emph{M{MP} for moduli of sheaves on {K}3s via
  wall-crossing: nef and movable cones, {L}agrangian fibrations}, Invent. Math.
  \textbf{198} (2014), no.~3, 505--590. \MR{3279532}

\bibitem{bayer_macri_vecchio}
A.~Bayer and E.~Macr{\`{\i}}, \emph{Projectivity and birational geometry of
  {B}ridgeland moduli spaces}, J. Amer. Math. Soc. \textbf{27} (2014), no.~3,
  707--752. \MR{3194493}

\bibitem{beauville_eng}
A.~Beauville, \emph{Some remarks on {K}\"ahler manifolds with {$c_{1}=0$}},
  Classification of algebraic and analytic manifolds ({K}atata, 1982), Progr.
  Math., vol.~39, Birkh\"auser Boston, Boston, MA, 1983, pp.~1--26. \MR{728605}

\bibitem{beauville}
\bysame, \emph{Vari\'et\'es {K}\"ahleriennes dont la premi\`ere classe de
  {C}hern est nulle}, J. Differential Geom. \textbf{18} (1983), no.~4, 755--782
  (1984). \MR{730926}

\bibitem{betram_coskun}
A.~Bertram and I.~Coskun, \emph{The birational geometry of the {H}ilbert scheme
  of points on surfaces}, Birational geometry, rational curves, and arithmetic,
  Springer, New York, 2013, pp.~15--55. \MR{3114922}

\bibitem{bcnws}
S.~Boissi{\`e}re, A.~Cattaneo, M.~Nieper-Wisskirchen, and A.~Sarti, \emph{The
  automorphism group of the {H}ilbert scheme of two points on a generic
  projective {K}3 surface}, K3 surfaces and their moduli, Progr. Math., vol.
  315, Birkh\"auser/Springer, [Cham], 2016, pp.~1--15. \MR{3524162}

\bibitem{bnws_smith}
S.~Boissi\`ere, M.~Nieper-Wi\ss~kirchen, and A.~Sarti, \emph{Smith theory and
  irreducible holomorphic symplectic manifolds}, J. Topol. \textbf{6} (2013),
  no.~2, 361--390. \MR{3065180}

\bibitem{bs}
S.~Boissi{\`e}re and A.~Sarti, \emph{A note on automorphisms and birational
  transformations of holomorphic symplectic manifolds}, Proc. Amer. Math. Soc.
  \textbf{140} (2012), no.~12, 4053--4062. \MR{2957195}

\bibitem{CCC}
C.~Camere, Al. Cattaneo, and An. Cattaneo, \emph{Non-symplectic involutions of
  manifolds of {$K3^{[n]}$}-type}, preprint.

\bibitem{catanese_gott}
F.~Catanese and L.~G{\oe}ttsche, \emph{{$d$}-very-ample line bundles and
  embeddings of {H}ilbert schemes of {$0$}-cycles}, Manuscripta Math.
  \textbf{68} (1990), no.~3, 337--341. \MR{1065935}

\bibitem{chrystal}
G.~Chrystal, \emph{Algebra: {A}n elementary text-book for the higher classes of
  secondary schools and for colleges, {P}art {II}}, Chelsea Publishing Co., New
  York, 1961. \MR{0121327}

\bibitem{debarre}
O.~Debarre, \emph{Hyperk{\"a}hler manifolds}, preprint.

\bibitem{debarre_macri}
O.~Debarre and E.~Macr{\`i}, \emph{On the period map for polarized
  hyperk{\"a}hler fourfolds}, Int. Math. Res. Not., to appear; arXiv:1704.01439
  (2017).

\bibitem{dolgachev}
I.~Dolgachev, \emph{Integral quadratic forms: applications to algebraic
  geometry (after {V}. {N}ikulin)}, Bourbaki seminar, {V}ol. 1982/83,
  Ast\'erisque, vol. 105, Soc. Math. France, Paris, 1983, pp.~251--278.
  \MR{728992}

\bibitem{ghs_kodaira}
V.~Gritsenko, K.~Hulek, and G.~K. Sankaran, \emph{The {K}odaira dimension of
  the moduli of {$K3$} surfaces}, Invent. Math. \textbf{169} (2007), no.~3,
  519--567. \MR{2336040}

\bibitem{ghs}
\bysame, \emph{Moduli spaces of irreducible symplectic manifolds}, Compos.
  Math. \textbf{146} (2010), no.~2, 404--434. \MR{2601632}

\bibitem{hass_tschink_moving}
B.~Hassett and Y.~Tschinkel, \emph{Moving and ample cones of holomorphic
  symplectic fourfolds}, Geom. Funct. Anal. \textbf{19} (2009), no.~4,
  1065--1080. \MR{2570315}

\bibitem{hassett_tschinkel}
\bysame, \emph{Extremal rays and automorphisms of holomorphic symplectic
  varieties}, K3 surfaces and their moduli, Progr. Math., vol. 315,
  Birkh\"auser/Springer, [Cham], 2016, pp.~73--95. \MR{3524165}

\bibitem{huybrechts}
D.~Huybrechts, \emph{Lectures on {K}3 surfaces},
  \url{http://www.math.uni-bonn.de/people/huybrech/K3Global.pdf}.

\bibitem{knutsen}
A.~L. Knutsen, \emph{On {$k$}th-order embeddings of {$K3$} surfaces and
  {E}nriques surfaces}, Manuscripta Math. \textbf{104} (2001), no.~2, 211--237.
  \MR{1821184}

\bibitem{markman}
E.~Markman, \emph{A survey of {T}orelli and monodromy results for
  holomorphic-symplectic varieties}, Complex and differential geometry,
  Springer Proc. Math., vol.~8, Springer, Heidelberg, 2011, pp.~257--322.
  \MR{2964480}

\bibitem{mongardi}
G.~Mongardi, \emph{Towards a classification of symplectic automorphisms on
  manifolds of {$K3^{[n]}$} type}, Math. Z. \textbf{282} (2016), no.~3-4,
  651--662. \MR{3473636}

\bibitem{oguiso}
K.~Oguiso, \emph{Automorphism groups of {C}alabi-{Y}au manifolds of {P}icard
  number 2}, J. Algebraic Geom. \textbf{23} (2014), no.~4, 775--795.
  \MR{3263669}

\bibitem{saint-donat}
B.~Saint-Donat, \emph{Projective models of {$K3$} surfaces}, Amer. J. Math.
  \textbf{96} (1974), 602--639. \MR{0364263}

\end{thebibliography}
\end{document}